\apptocmd{\thebibliography}{\raggedright}{}{}
\numberwithin{equation}{section}
\theoremstyle{plain}
\newtheorem{theorem}{Theorem}[section]
\newtheorem{maintheorem}{Theorem}
\newtheorem{maincorollary}[maintheorem]{Corollary}
\newtheorem{maintheoremprime}{Theorem}
\newtheorem{lemma}[theorem]{Lemma}
\newtheorem{corollary}[theorem]{Corollary}
\newtheorem{steps}{Step}
\theoremstyle{definition}
\newtheorem{defn}[theorem]{Definition}
\theoremstyle{remark}
\newtheorem{rmk}[theorem]{Remark}
\newenvironment{remark}[1][]{\begin{rmk}[#1] \pushQED{}}{\popQED \end{rmk}}
\newtheorem{eg}[theorem]{Example}
\DeclareMathOperator{\Hom}{Hom}
\DeclareMathOperator{\PMod}{PMod}
\DeclareMathOperator{\Sp}{Sp}
\newcommand\Moduli{\ensuremath{{\mathcal M}}}
\newcommand\C{\ensuremath{\mathbb{C}}}
\newcommand\Z{\ensuremath{\mathbb{Z}}}
\newcommand\Q{\ensuremath{\mathbb{Q}}}
\DeclareMathOperator{\HH}{H}
\newcommand\RH{\ensuremath{\widetilde{\HH}}}
\DeclareMathOperator{\Aut}{Aut}
\DeclareMathOperator{\link}{lk}
\DeclareMathOperator{\Isom}{Isom}
\newcommand\Set[2]{\ensuremath{\left\{\text{#1 $|$
        #2}\right\}}}
\newcommand\cO{\ensuremath{\mathcal{O}}}
\newcommand\cF{\ensuremath{\mathcal{F}}}
\newcommand\cG{\ensuremath{\mathcal{G}}}
\newcommand\cX{\ensuremath{\mathcal{X}}}
\newcommand\tC{\ensuremath{\widetilde{C}}}
\newcommand\tQ{\ensuremath{\widetilde{\Q}}}
\newcommand\tSigma{\ensuremath{\widetilde{\Sigma}}}
\newcommand\tModuli{\ensuremath{\widetilde{\Moduli}}}
\newcommand\tX{\ensuremath{\widetilde{X}}}
\newcommand\tx{\ensuremath{\widetilde{x}}}
\newcommand\tp{\ensuremath{\widetilde{p}}}
\newcommand\tpsi{\ensuremath{\widetilde{\psi}}}
\newcommand\oC{\ensuremath{\overline{C}}}
\newcommand\hMod{\ensuremath{\widehat{\PMod}}}
\newcommand\bbP{\ensuremath{\mathbb{P}}}
\newcommand\bbA{\ensuremath{\mathbb{A}}}
\newcommand\bbH{\ensuremath{\mathbb{H}}}
\newcommand\Curves{\ensuremath{\mathcal{C}}}
\DeclareMathOperator{\vcd}{vcd}
\DeclareMathOperator{\cohcd}{CohCD}
\DeclareMathOperator{\St}{St}
\DeclareMathOperator{\PP}{PP}
\DeclareMathOperator{\NewC}{N\Curves}
\DeclareMathOperator{\Mod}{Mod}
\newcommand{\p}[1]{{\bf #1.}}
\title{\vspace{-40pt}The high-dimensional cohomology of the moduli space of curves with level structures II: punctures and boundary\vspace{-15pt}}
\author{Tara Brendle \and Nathan Broaddus \and Andrew Putman\thanks{Supported in part by NSF grant DMS-1811210}}
\date{}
\begin{document}

\vspace{-10pt}
\maketitle

\vspace{-18pt}
\begin{abstract}
\noindent
We give two proofs that appropriately defined congruence subgroups of the mapping class group of a surface with punctures/boundary
have enormous amounts of rational cohomology in their virtual cohomological dimension.  In particular we give bounds that are super-exponential in each of three variables: number of punctures, number of boundary components, and genus, generalizing work of Fullarton--Putman.  Along the way, we give a simplified account of a theorem of Harer explaining how to relate the homotopy type of the curve complex of a multiply-punctured surface to the curve complex of a once-punctured surface through a process that can be viewed as an analogue of a Birman exact sequence for curve complexes.

As an application, we prove upper and lower bounds on the coherent cohomological dimension of the moduli space of curves with marked points.  For $g \leq 5$, we compute this coherent cohomological dimension for any number of marked points. In contrast to our bounds on cohomology, when the surface has $n \geq1$ marked points, these bounds turn out to be independent of $n$, and depend only on the genus.  
\end{abstract}

\setlength{\parskip}{0pt}
\tableofcontents
\setlength{\parskip}{\baselineskip}

\section{Introduction}
\label{section:introduction}

Let $\Sigma_{g,n}^b$ be an oriented genus $g$ surface with $n$ punctures and $b$ boundary components.  If $n$ or $b$ vanishes, then we will omit them from our notation.  The (pure)
{\em mapping class group} of $\Sigma_{g,n}^b$, denoted $\PMod_{g,n}^b$, is the group
of isotopy classes of orientation-preserving diffeomorphisms of $\Sigma_{g,n}^b$ that
fix $\partial \Sigma_{g,n}^b$ pointwise and do not permute the punctures.\footnote{When $n \in \{0, 1\}$, it is common to use the notation $\Mod$ rather than $\PMod$; however, we will use $\PMod$ consistently throughout the paper for the sake of streamlining certain statements.}
  The cohomology
of $\PMod_{g,n}^b$ plays an important role in many areas of mathematics.  One
fundamental reason for this is that $\PMod_{g,n}^b$ is the orbifold fundamental
group of the moduli space $\Moduli_{g,n}^b$ of finite-volume hyperbolic metrics
on $\Sigma_{g,n}^b$ with geodesic boundary.  The orbifold universal cover of
$\Moduli_{g,n}^b$ is Teichm\"{u}ller space, which is diffeomorphic to a ball and
is thus contractible.  This implies that $\Moduli_{g,n}^b$ is an orbifold
classifying space for $\PMod_{g,n}^b$, and hence that
\[\HH^{\bullet}(\PMod_{g,n}^b;\Q) \cong \HH^{\bullet}(\Moduli_{g,n}^b;\Q).\]
See \cite{FarbMargalitPrimer} for a survey.

\p{Stable cohomology}
There are two important families of cohomology classes for $\PMod_{g,n}^b$:
\begin{compactitem}
\item The Miller--Morita--Mumford classes $\kappa_i \in \HH^{2i}(\PMod_{g,n}^b;\Q)$.
\item For $1 \leq i \leq n$, the Euler class $u_i \in \HH^2(\PMod_{g,n}^b;\Q)$ of
the two-dimensional subbundle of the tangent bundle of $\Moduli_{g,n}^b$ corresponding
to tangent directions where the $i^{\text{th}}$ puncture moves.
\end{compactitem}
A fundamental theorem of Madsen--Weiss \cite{MadsenWeiss} says that the corresponding
map
\[\Q[u_1,\ldots,u_n,\kappa_1,\kappa_2,\ldots] \longrightarrow \HH^{\bullet}(\PMod_{g,n}^b;\Q)\]
of graded rings is an isomorphism in degree $k$ if $g \gg k$ (the reference \cite{MadsenWeiss} only
deals with surfaces without punctures -- see \cite[Proposition 2.1]{LooijengaStable} for how to deal with punctures).  The portion of the
cohomology ring identified by this theorem is the {\em stable cohomology}.  Building
on work of Harer--Zagier \cite{HarerZagier} in the closed case,
Bini--Harer \cite{BiniHarer} proved that the Euler characteristic of
$\Moduli_{g,n}^b$ is enormous (at least if $b=0$), so there must be
a large amount of unstable cohomology.  However, especially in the key closed case 
results on unstable cohomology are only recently beginning to appear.  
Indeed, there are very few known infinite families of nonzero
unstable classes on closed surfaces (found by Payne--Willwacher \cite{PayneWillwacher} and Chan--Galatius--Payne \cite{ChanGalatiusPayne}, disproving
conjectures of Church--Farb--Putman \cite{ChurchFarbPutmanConjecture} and Kontsevich \cite{Kontsevich}).

\p{Top-degree cohomology}
Our main theorem concerns the ``most unstable'' possible cohomology groups.
Harer \cite[Theorem 4.1]{HarerDuality} proved that $\PMod_{g,n}^b$ is a virtual
duality group with virtual cohomological dimension
\begin{equation}
\label{eqn:vcdformula}
\vcd(\PMod_{g,n}^b) =
  \begin{cases*}
   4g - 5 & \text{if $n=b=0$ and $g \geq 2$},\\
   4g + 2b + n - 4 & \text{if $n+b \geq 1$ and $g \geq 1$}.\\
  \end{cases*}
\end{equation}
This identifies the top degree in which $\PMod_{g,n}^b$ might have some
rational cohomology.  However, 
Church--Farb--Putman \cite{ChurchFarbPutmanTop} and
Morita--Sakasai--Suzuki \cite{MoritaSakasaiSuzuki} independently proved that
for $n+b \leq 1$ no rational cohomology can be found in this degree:
\[\HH^{4g-5}(\PMod_g;\Q) = \HH^{4g-3}(\PMod_{g,1};\Q) = \HH^{4g-2}(\PMod_g^1;\Q) = 0 \quad \text{for $g \geq 2$}.\]

\p{Level subgroups without punctures or boundary}
For $\ell \geq 2$, let $\PMod_{g}[\ell]$ be the level-$\ell$
subgroup of $\PMod_{g}$, that is, the kernel of the action of
$\PMod_{g}$ on $\HH_1(\Sigma_{g};\Z/\ell)$.  This action preserves
the algebraic intersection pairing, which is a $\Z/\ell$-valued
symplectic form.  It thus induces a homomorphism
$\PMod_{g} \rightarrow \Sp_{2g}(\Z/\ell)$ that is classically known
to be surjective.  This is all summarized in the short exact sequence
\[1 \longrightarrow \PMod_{g}[\ell] \longrightarrow \PMod_{g} \longrightarrow \Sp_{2g}(\Z/\ell) \longrightarrow 1.\]
Since $\PMod_{g}[\ell]$ is a finite-index subgroup of $\PMod_{g}$,
it is also a virtual duality group (with the same virtual cohomological dimension).  However,
unlike for $\PMod_{g}$ its top rational cohomology group
is not zero.  In fact, a recent theorem of Fullarton--Putman \cite{FullartonPutman} says that
if $p$ is a prime dividing $\ell$, then the dimension of
the rational cohomology of $\PMod_g[\ell]$ in its virtual cohomological dimension 
is at least
\begin{equation}
\label{eqn:fullartonputman}
\frac{1}{g} p^{2g-1} \prod_{k=1}^{g-1} (p^{2k}-1)p^{2k-1}.
\end{equation}
This is super-exponential in $g$; its leading term is $\frac{1}{g} p^{\binom{2g}{2}}$.

\p{Level subgroups with punctures or boundary}
Our main theorem extends this to surfaces with punctures and boundary.
Let $\PMod_{g,n}^b[\ell]$ be the kernel of the action of $\PMod_{g,n}^b$ on
$\HH_1(\Sigma_{g,n}^b;\Z/\ell)$.  The intersection pairing on $\HH_1(\Sigma_{g,n}^b;\Z/\ell)$ is
degenerate if $n+b \geq 2$, so in these cases $\PMod_{g,n}^b$ does not act via the symplectic
group.  

\begin{remark}
It is also common to consider the kernel of the action of $\PMod_{g,n}^b$
on $\HH_1(\Sigma_g;\Z/\ell)$.
This is different from $\PMod_{g,n}^b[\ell]$
when $n+b \geq 2$, and our theorems do not apply to it.
\end{remark}

In recent work, the third author \cite{PutmanStableLevel} has proved that
\[\HH^k(\PMod_{g,n}^b[\ell];\Q) \cong \HH^k(\PMod_{g,n}^b;\Q) \quad \quad \text{when $g \gg k$}.\]
In other words, when you pass to $\PMod_{g,n}^b[\ell]$ no new rational cohomology
appears in the stable range.  When $n+b \leq 1$, this was earlier proved
for $k=1$ by Hain \cite{HainSurvey} and for $k=2$ by the third author \cite{PutmanH2Level}.

\p{Main theorem}
With this definition, our main theorem is as follows.

\begin{maintheorem}
\label{maintheorem:highdim}
Fix some $g \geq 1$ and $n,b \geq 0$ such that $n+b \geq 1$.
Let $\nu$ be the virtual cohomological dimension of $\PMod_{g,n}^b$.  For all $\ell \geq 2$,
the following then hold:
\begin{compactitem}
\item If $n+b = 1$ and $p$ is a prime dividing $\ell$,
then the dimension of $\HH^{\nu}(\PMod_{g,n}^b[\ell];\Q)$ is at least
\[\frac{1}{g} p^{2g-1} \prod_{k=1}^{g-1} (p^{2k}-1)p^{2k-1}.\]
\item If $n+b \geq 2$ and $\nu'$ is the virtual cohomological dimension of
$\PMod_{g,1}$, then the dimension of $\HH^{\nu}(\PMod_{g,n}^b[\ell];\Q)$ is at least
\[\left(\prod_{k=1}^{b+n-1}\left(k \ell^{2g}-1\right)\right) \cdot \dim_{\Q} \HH^{\nu'}(\PMod_{g,1}[\ell];\Q).\]
\end{compactitem}
\end{maintheorem}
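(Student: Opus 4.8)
The plan is to reduce everything to the two "base cases" that are already known---the once-punctured/once-bounded case governed by the Fullarton--Putman bound, and the behaviour under adding punctures and boundary components---by exploiting the duality structure and the Birman-type exact sequences. Concretely, Harer's virtual duality theorem says that $\HH^{\nu}(\PMod_{g,n}^b[\ell];\Q)$ is dual to $\HH_0$ of the dualizing module, i.e. to the top reduced homology of the relevant version of the curve complex (the Steinberg-type module) with coefficients twisted by the finite-index subgroup. So the first step is to reinterpret the dimension of $\HH^{\nu}(\PMod_{g,n}^b[\ell];\Q)$ as the number of $\PMod_{g,n}^b[\ell]$-orbits (with multiplicity, in the sense of a rank computation) of top-dimensional simplices in an appropriate complex---this is exactly the mechanism by which \eqref{eqn:fullartonputman} is proved in \cite{FullartonPutman}, and the formula there counts orbits of ordered symplectic bases, giving the product $\frac{1}{g}p^{2g-1}\prod_{k=1}^{g-1}(p^{2k}-1)p^{2k-1}$.

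For the first bullet ($n+b=1$), the surface $\Sigma_{g,n}^b$ is either $\Sigma_{g,1}$ or $\Sigma_g^1$, and in both cases $\HH_1(\Sigma_{g,n}^b;\Z/\ell)\cong(\Z/\ell)^{2g}$ carries a nondegenerate symplectic form, so $\PMod_{g,n}^b[\ell]$ is the level-$\ell$ congruence subgroup in the classical sense. The claim is then essentially a restatement of Fullarton--Putman: one checks that their argument, which is stated for the closed case $\PMod_g[\ell]$, goes through verbatim for $\PMod_{g,1}[\ell]$ and $\PMod_g^1[\ell]$ because the combinatorics of the dualizing module only sees the symplectic representation $\Sp_{2g}(\Z/\ell)$, and the vcd shifts appropriately (from $4g-5$ to $4g-3$ or $4g-2$) are absorbed into the degree in which the duality is applied. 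So here I would either cite \cite{FullartonPutman} directly after the (routine) verification that the once-punctured and once-bounded cases are no different, or reprove it in a sentence using the curve-complex model developed later in the paper.

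For the second bullet ($n+b\geq 2$), the idea is to peel off punctures and boundary components one at a time using the analogue of the Birman exact sequence for curve complexes advertised in the abstract (the simplified account of Harer's theorem). Each time one forgets a puncture or caps a boundary component, the vcd drops by $1$ (for a puncture) or $2$ (for a boundary component), and the top homology of the curve complex gets multiplied by a factor counting the "fibre" directions: adding the $k$-th marked point to a surface whose first homology already has order $\ell^{2g}\cdot(\text{something})$ contributes a factor of the form $k\ell^{2g}-1$, coming from counting nonzero vectors in $\HH_1(\Z/\ell)$ of the larger surface modulo the previously chosen ones, twisted by the congruence condition. Iterating from $\Sigma_{g,1}$ up to $\Sigma_{g,n}^b$ produces the product $\prod_{k=1}^{b+n-1}(k\ell^{2g}-1)$ in front of $\dim_\Q\HH^{\nu'}(\PMod_{g,1}[\ell];\Q)$. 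The main technical content---and the step I expect to be the real obstacle---is establishing that these Birman-type exact sequences induce, on the level of dualizing modules / top homology of curve complexes, a \emph{clean tensor-product} (or at least a split injection with the claimed cokernel count) that is compatible with passing to the level-$\ell$ subgroup; this requires the homotopy-theoretic comparison of curve complexes of multiply- versus once-punctured surfaces that the paper sets up, together with a careful count of how the $\PMod[\ell]$-action permutes the resulting top cells. Everything else (the arithmetic of the products, the bookkeeping of vcd shifts, the reduction of the $n+b=1$ case to Fullarton--Putman) is routine once that structural input is in place.
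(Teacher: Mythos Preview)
Your high-level strategy matches the paper's: reduce to the once-punctured case via Fullarton--Putman, and peel off extra marked points one at a time, each step contributing a factor of the form $(k\ell^{2g}-1)$. Two points of imprecision are worth flagging.

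First, the boundary reduction is \emph{not} done by a curve-complex Birman argument. Capping a boundary component yields a central extension by $\Z$ (the boundary Dehn twist), not a surface-group kernel, and the relevant mechanism is a Hochschild--Serre argument for the central extension
\[
1 \longrightarrow \Z^{b} \longrightarrow \PMod_{g,n}^b[\ell] \longrightarrow \PMod_{g,n+b}[\ell] \longrightarrow 1,
\]
which identifies $\HH^{\nu}(\PMod_{g,n}^b[\ell];\Q)$ with $\HH^{\nu-b}(\PMod_{g,n+b}[\ell];\Q)$ without introducing any multiplicative factor. Only after all boundary has been traded for punctures do you start peeling off punctures via Birman; each such step contributes one factor $(k\ell^{2g}-1)$, and there are $n+b-1$ of them. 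Your sentence ``the vcd drops by $2$ for a boundary component'' is correct arithmetically but obscures this two-stage structure.

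Second, for $n+b=1$ the paper does not rerun the Fullarton--Putman argument ``verbatim'' on $\Sigma_{g,1}$ or $\Sigma_g^1$. Rather, it uses Bieri--Eckmann duality to pass to coinvariants of $\St$, invokes Harer's homotopy equivalence $\Curves_{g,1}\simeq\Curves_g$ to identify $\St(\Sigma_{g,1})\cong\St(\Sigma_g)$ as $\PMod_{g,1}$-modules, and then applies the closed-surface result directly. Your description of the factor $(k\ell^{2g}-1)$ as ``counting nonzero vectors modulo previously chosen ones'' is also off: in the Birman proof it is $(\text{number of punctures in the $\HH_1(\Sigma_g;\Z/\ell)$-cover})-1$, and in the Steinberg proof it is $|\NewC_{g,n}/\PMod_{g,n}[\ell]|-1$, i.e.\ an orbit count of arcs joining the new puncture to the old ones. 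None of these points is fatal, but each would need to be made precise for the argument to go through.
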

Note the latter expression is super-exponential in $n$ and $b$ as well as $g$.
\begin{remark}
Our bound when $n+b = 1$ is the same as Fullarton--Putman's bound in the closed case, 
and in fact follows
easily from their work.  The main point of this paper is to deal
with the case where $n+b \geq 2$.
\end{remark}

\begin{remark}
It is natural to wonder whether the rational cohomology of $\PMod_{g,n}^b$ vanishes in its virtual cohomological
dimension when $n+b \geq 2$ (as was proved by Church--Farb--Putman \cite{ChurchFarbPutmanTop} and
Morita--Sakasai--Suzuki \cite{MoritaSakasaiSuzuki} when $n+b \leq 1$).  This does not hold in all cases; for instance,
calculations of Chan \cite{ChanGenus2} and Bibby--Chan--Gadish--Yun \cite{BibbyChanGadishYun} show that it fails for $\PMod_{2,n}$ for certain $n$, and in fact the rank of this
cohomology group in genus $2$ seem to grow quickly as $n \mapsto \infty$.  However, in unpublished
work we have shown in some cases that vanishing does hold when $n+b$ is small relative to $g$, and we think it is likely that
in its virtual cohomological dimension the rational cohomology of $\PMod_{g,n}^b$ vanishes when $g \gg n+b$.
\end{remark}

\p{Proof I: Birman exact sequence}
We actually give two proofs of Theorem \ref{maintheorem:highdim}.  The first is more direct,
but gives less ancillary information.
It makes use of the Birman exact sequence, which is a short exact sequence
\begin{equation}
\label{eqn:birmanintro}
1 \longrightarrow \pi_1(\Sigma_{g,n-1}^b) \longrightarrow \PMod_{g,n}^b \longrightarrow \PMod_{g,n-1}^b \longrightarrow 1.
\end{equation}
Here the map $\PMod_{g,n}^b \rightarrow \PMod_{g,n-1}^b$ arises from filling in the $n^{\text{th}}$ puncture and the subgroup
$\pi_1(\Sigma_{g,n-1}^b)$ of $\PMod_{g,n}^b$ is the ``point-pushing subgroup'' consisting of mapping classes that drag
the $n^{\text{th}}$ puncture around loops in the surface.  We develop an analogue of \eqref{eqn:birmanintro} for
$\PMod_{g,n}^b[\ell]$ and study the cohomology of the resulting extension.

\p{Steinberg module}
Our second proof uses the Steinberg module for the mapping class group.  Recall from above that Harer \cite{HarerDuality}
proved that $\PMod_{g,n}^b$ is a virtual duality group.  In particular, it is a
$\Q$-duality group.  Letting $\nu$ be its virtual cohomological dimension, this
implies that the cohomology of $\PMod_{g,n}^b$ satisfies the following Poincar\'{e}-duality-like relation:
\[\HH^{\nu-i}(\PMod_{g,n}^b;\Q) \cong \HH_{i}(\PMod_{g,n}^b;\St(\Sigma_{g,n}^b)).\]
The term $\St(\Sigma_{g,n}^b)$ (called the {\em Steinberg module}) is 
the {\em dualizing module} for the mapping class group.  Harer gave a beautiful
description of $\St(\Sigma_{g,n}^b)$ in terms of the curve complex, which we now discuss.

\p{Curve complex}
The curve complex $\Curves_{g,n}^b$ is the simplicial
complex whose $k$-simplices are collections $\{\gamma_0,\ldots,\gamma_k\}$ of isotopy classes of nontrivial simple
closed curves on $\Sigma_{g,n}^b$ that can be realized disjointly.  Here nontrivial means that $\gamma_i$ is neither
nullhomotopic nor homotopic to any of the punctures or boundary components of $\Sigma_{g,n}^b$.  As is
traditional in the subject, we will usually not distinguish between a simple closed curve and its
isotopy class.  Harer proved that
$\Curves_{g,n}^b$ is homotopy equivalent to a wedge of spheres of dimension
\begin{equation}
\label{eqn:lambda}
\lambda = \begin{cases}
2g-2 & \text{if $g \geq 1$ and $n=b=0$},\\
2g-3+n+b & \text{if $g \geq 1$ and $n+b \geq 1$},\\
n+b-4 & \text{if $g = 0$}.
\end{cases}
\end{equation}
The only nonzero reduced homology group of $\Curves_{g,n}^b$ thus lies in degree $\lambda$.  The group $\PMod_{g,n}^b$ acts
on $\Curves_{g,n}^b$, and thus acts on its homology.  
Harer proved that $\St(\Sigma_{g,n}^b) = \RH_{\lambda}(\Curves_{g,n}^b;\Q)$ is the dualizing module of 
$\PMod_{g,n}^b$.

\p{Proof II}
Since groups $\PMod_{g,n}^b[\ell]$ are finite-index subgroups of the virtual duality groups $\PMod_{g,n}^b$, they
are also virtual duality groups with the same dualizing module.  In particular, letting $\nu$ be their
virtual cohomological dimension, we have
\[\HH^{\nu}(\PMod_{g,n}^b[\ell];\Q) \cong \HH_0(\PMod_{g,n}^b[\ell];\St(\Sigma_{g,n}^b)) = (\St(\Sigma_{g,n}^b))_{\PMod_{g,n}^b[\ell]},\]
where the subscript indicates that we are taking coinvariants.  To understand these coinvariants and prove
Theorem \ref{maintheorem:highdim}, we must relate $\St(\Sigma_{g,n}^b)$ to $\St(\Sigma_{g,1})$.  This is the
subject of our next main theorem.

\p{Inductive description of curve complex}
It is clear that $\Curves_{g,n}^b \cong \Curves_{g,n+b}$, so to simplify our notation we will focus on
the surfaces with punctures but no boundary components.  In the spirit of the Birman exact sequence, we can try to relate $\Curves_{g,n}$ and $\Curves_{g,n-1}$.
In \cite{HarerDuality}, Harer proved 
that the map $\Curves_{g,1} \rightarrow \Curves_g$ obtained by removing the puncture
is a homotopy equivalence for all $g \geq 1$.  See
\cite[Proposition 4.7]{HatcherVogtmann} and \cite[Corollary 1.1]{KentLeiningerSchleimer} 
for alternate proofs.  We will thus focus on relating $\Curves_{g,n}$ to $\Curves_{g,n-1}$ for $n \geq 2$.  

In these cases,
the map $\Sigma_{g,n} \rightarrow \Sigma_{g,n-1}$ that fills in the $n^{\text{th}}$ puncture {\em almost} induces
a map $\Curves_{g,n} \rightarrow \Curves_{g,n-1}$.  The only problem is that under this map curves $\gamma$
that bound a twice-punctured disc one of whose punctures is the $n^{\text{th}}$ one become trivial.  Let
$\NewC_{g,n}$ be the set of such curves, and let $\cX_{g,n}$ be the full subcomplex of $\Curves_{g,n}$ spanned by the vertices
that do not lie in $\NewC_{g,n}$.  Any two curves in $\NewC_{g,n}$ must intersect, so none of them are joined
by an edge in $\Curves_{g,n}$.  Regarding $\NewC_{g,n}$ as a discrete set and 
letting $\ast$ denote the simplicial join, we thus have
\[\Curves_{g,n} \subset \NewC_{g,n} \ast \, \cX_{g,n}.\]
Filling in the $n^{\text{th}}$ puncture does induce a map $ \cX_{g,n} \rightarrow \Curves_{g,n-1}$.  This map is
not an isomorphism, but it is implicit in Harer's work that the map $ \cX_{g,n} \rightarrow \Curves_{g,n-1}$
is a homotopy equivalence.  Just like in the case where $n=1$, this can also be
proved as in \cite[Proposition 4.7]{HatcherVogtmann} and \cite[Corollary 1.1]{KentLeiningerSchleimer}.

In fact, even more is true:

\begin{maintheorem}
\label{maintheorem:curvecpx}
Fix some $g \geq 0$ and $n \geq 2$ such that $\Sigma_{g,n} \notin \{\Sigma_{0,2},\Sigma_{0,3}\}$.  Then there is a $\PMod_{g,n}$-equivariant
homotopy equivalence 
$\Curves_{g,n} \simeq \NewC_{g,n} \ast \, \Curves_{g,n-1}$.
\end{maintheorem}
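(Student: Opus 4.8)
We already know (from the discussion preceding the statement) that filling in the $n^{\text{th}}$ puncture gives a homotopy equivalence $\cX_{g,n} \to \Curves_{g,n-1}$, and that $\Curves_{g,n} \subset \NewC_{g,n} \ast \cX_{g,n}$. So the heart of the matter is to promote the inclusion $\Curves_{g,n} \hookrightarrow \NewC_{g,n} \ast \cX_{g,n}$ to a $\PMod_{g,n}$-equivariant homotopy equivalence, and then compose with the join of the identity on $\NewC_{g,n}$ with the equivalence $\cX_{g,n} \simeq \Curves_{g,n-1}$. The strategy for the first (and essential) step is to build an equivariant deformation retraction: the join $\NewC_{g,n} \ast \cX_{g,n}$ is larger than $\Curves_{g,n}$ precisely because it contains simplices $\sigma \ast \tau$ where $\sigma \subset \NewC_{g,n}$ is a single vertex (recall no two curves in $\NewC_{g,n}$ are disjoint, so $\sigma$ is always a vertex or empty) and $\tau \subset \cX_{g,n}$ is a simplex of curves \emph{not} all disjoint from the chosen curve $c \in \NewC_{g,n}$. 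I would retract away these extra simplices using the combinatorial structure: for a fixed vertex $c \in \NewC_{g,n}$, the link of $c$ in the join is all of $\cX_{g,n}$, but its link in $\Curves_{g,n}$ is the subcomplex $\Curves_{g,n}^{c}$ of curves disjoint from $c$; so I want to show $\cX_{g,n}$ deformation retracts $\PMod_{g,n}$-equivariantly (in fact $\mathrm{Stab}(c)$-equivariantly) onto $\Curves_{g,n}^{c}$, compatibly across all $c$, and then glue.

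Carrying this out concretely: fix $c \in \NewC_{g,n}$, which bounds a twice-punctured disc containing the $n^{\text{th}}$ puncture and one other puncture $q$. Cutting along $c$ decomposes $\Sigma_{g,n}$ as $\Sigma_{g,n-1}' \sqcup (\text{twice-punctured disc})$; the complex $\Curves_{g,n}^{c}$ of curves disjoint from $c$ is naturally $\Curves(\Sigma_{g,n-1}')$. I claim $\cX_{g,n} \simeq \Curves_{g,n}^{c}$ via a retraction that pushes a curve $\gamma$ off of $c$. The cleanest way to do this is to realize $\cX_{g,n}$ as the curve complex of the surface obtained from $\Sigma_{g,n}$ by filling in the $n^{\text{th}}$ puncture only (a surface $\Sigma_{g,n-1}$ with a \emph{marked point} where the puncture was), so that $c$ becomes a curve bounding a once-punctured disc around that marked point — i.e.\ a curve surrounding a single puncture, which is trivial. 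Then "disjoint from $c$" becomes "contained in $\Sigma_{g,n-1}$ minus a small disc around the marked point," and the radial push-off of the marked point gives the retraction. This is exactly the same move used in \cite[Proposition 4.7]{HatcherVogtmann} and \cite[Corollary 1.1]{KentLeiningerSchleimer} to prove those earlier homotopy equivalences, and I would cite that technology rather than reprove it; the point here is simply that it can be done \emph{equivariantly} and \emph{simultaneously over all $c \in \NewC_{g,n}$}, which works because the curves in $\NewC_{g,n}$ are pairwise-intersecting and hence the various twice-punctured discs and their retractions do not interfere with one another on any single simplex of $\NewC_{g,n} \ast \cX_{g,n}$ (which meets $\NewC_{g,n}$ in at most one vertex).

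**Assembling the equivalence.** Granting the local retractions $r_c \colon \cX_{g,n} \to \Curves_{g,n}^{c}$, I build a homotopy $H \colon (\NewC_{g,n}\ast\cX_{g,n}) \times [0,1] \to \NewC_{g,n}\ast\cX_{g,n}$ that is the identity on $\Curves_{g,n}$ and deformation-retracts the ambient join onto $\Curves_{g,n}$: on a simplex of the form $c \ast \tau$ with $c \in \NewC_{g,n}$, use (the simplicial approximation of) $\mathrm{id}_c \ast (r_c|_\tau)$, which lands in $c \ast \Curves_{g,n}^{c} \subset \Curves_{g,n}$; on simplices entirely inside $\cX_{g,n} = $ the link-complex, do nothing. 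These agree on overlaps because the $r_c$ are all variants of the same radial push-off, and $\PMod_{g,n}$-equivariance is automatic since $\phi \in \PMod_{g,n}$ carries the disc data for $c$ to the disc data for $\phi(c)$, so $\phi \circ r_c = r_{\phi(c)} \circ \phi$. Composing the resulting equivalence $\Curves_{g,n} \simeq \NewC_{g,n} \ast \cX_{g,n}$ with $\mathrm{id} \ast (\text{filling-in equivalence } \cX_{g,n} \simeq \Curves_{g,n-1})$ — the latter is $\PMod_{g,n}$-equivariant because $\PMod_{g,n} \to \PMod_{g,n-1}$ is compatible with filling in the puncture, and the action of the point-pushing kernel $\pi_1(\Sigma_{g,n-1})$ on $\cX_{g,n}$ becomes trivial up to homotopy (this is precisely the content of the homotopy equivalence) — yields the desired $\PMod_{g,n}$-equivariant homotopy equivalence $\Curves_{g,n} \simeq \NewC_{g,n}\ast\Curves_{g,n-1}$. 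The excluded small cases $\Sigma_{0,2},\Sigma_{0,3}$ are exactly where $\Curves_{g,n-1}$ or $\cX_{g,n}$ degenerates (empty or a point), so the hypothesis is what makes all the joins and retractions nontrivial.

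**Main obstacle.** The routine part is the single push-off retraction $r_c$; the genuine difficulty — and the only place real care is needed — is making these retractions \emph{mutually compatible and simultaneously equivariant} so that they glue into a well-defined homotopy on the whole join, rather than just giving a homotopy equivalence one vertex-star at a time. Concretely one must choose the "small discs around the marked point" (equivalently, collar neighborhoods of the curves $c$) in a $\PMod_{g,n}$-equivariantly coherent way; the saving grace, and what I expect to invoke repeatedly, is that any simplex of $\NewC_{g,n}\ast\cX_{g,n}$ contains \emph{at most one} vertex of $\NewC_{g,n}$, so one never has to reconcile two different push-offs on the same simplex, and coherence reduces to an orbit-by-orbit choice over the (single) $\PMod_{g,n}$-orbit of vertices in $\NewC_{g,n}$.
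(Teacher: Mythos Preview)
Your overall architecture is the same as the paper's: both of you work with the inclusion $\Curves_{g,n} \subset \NewC_{g,n} \ast \cX_{g,n}$, both identify the crux as showing that for each $c \in \NewC_{g,n}$ the inclusion $\link_{\Curves_{g,n}}(c) \hookrightarrow \cX_{g,n}$ is a homotopy equivalence, and both accomplish this by a ``push curves off $c$'' move (what you call a radial push-off is exactly the Hatcher flow the paper uses, sliding intersection points over the $n^{\text{th}}$ puncture along an arc inside the twice-punctured disc).

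Where you diverge from the paper, and where there is a genuine gap, is in the assembly step. The paper does \emph{not} build a global deformation retraction. Instead it proves an abstract lemma: if $Y \subset I \ast X$ with $I$ discrete and $I,X \subset Y$, and if each $\link_Y(i) \hookrightarrow X$ is a homotopy equivalence, then the inclusion $Y \hookrightarrow I \ast X$ is a homotopy equivalence. This is proved by a short open-cover argument (covering $Y$ and $I \ast X$ by the open star of each $i \in I$ together with the complement of $I$, and invoking the homotopy-equivalence-from-covers criterion). The equivariance in the theorem then comes for free, since the inclusion $\Curves_{g,n} \hookrightarrow \NewC_{g,n} \ast \cX_{g,n}$ and the map $\cX_{g,n} \to \Curves_{g,n-1}$ are manifestly equivariant; the paper only claims an equivariant map that happens to be a homotopy equivalence, not an equivariant homotopy inverse.

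Your proposed construction, by contrast, does not glue. On a simplex $c \ast \tau$ you apply $\mathrm{id}_c \ast r_c$, and on simplices in $\cX_{g,n}$ you apply the identity; but $\tau$ is a face of $c \ast \tau$, and on that face your two prescriptions give $r_c|_\tau$ and $\mathrm{id}_\tau$, which disagree precisely when $\tau \not\subset \link(c)$, i.e.\ exactly on the simplices you are trying to retract. The ``main obstacle'' you flag at the end is not the real one: the issue is not reconciling $r_c$ with $r_{c'}$ for different $c,c'$ (as you note, these never meet), but reconciling $r_c$ on the open star of $c$ with the identity on $\cX_{g,n}$. Fixing this requires either using the join coordinate to damp the homotopy as you approach $\cX_{g,n}$ (which then no longer lands in $\Curves_{g,n}$ at time $1$), or invoking abstract cofibration arguments to produce the retraction nonconstructively --- at which point you have essentially recovered the paper's approach. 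Relatedly, the Hatcher flow does not give a simplicial retraction $r_c\colon \cX_{g,n} \to \link(c)$; it gives, for each simplicial map $S^m \to \cX_{g,n}$, a homotopy into $\link(c)$, which is exactly enough to show $\pi_*$-surjectivity of the inclusion (injectivity being automatic from $\link(c) \hookrightarrow \cX_{g,n} \to \Curves_{g,n-1}$ being an isomorphism), and that is how the paper uses it.
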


Theorem \ref{maintheorem:curvecpx} was essentially proved by Harer, but we give a self-contained and simplified
account of it using the idea of ``Hatcher flows'' introduced in \cite{HatcherFlow}.  

\p{Inductive description of Steinberg}
Let $\lambda$
be as in \eqref{eqn:lambda}.  Since $\NewC_{g,n}$ is a discrete set, Theorem \ref{maintheorem:curvecpx} implies that
\[\St(\Sigma_{g,n}) = \RH_{\lambda}(\Curves_{g,n};\Q) \cong \RH_{0}(\NewC_{g,n};\Q) \otimes \RH_{\lambda-1}(\Curves_{g,n-1};\Q) = \RH_0(\NewC_{g,n};\Q) \otimes \St(\Sigma_{g,n-1}).\]
This is an isomorphism of $\PMod_{g,n}$-modules, where $\PMod_{g,n}$ acts on $\NewC_{g,n}$ via its action on $\Sigma_{g,n}$ and
on $\St(\Sigma_{g,n-1})$ via the surjection $\PMod_{g,n} \rightarrow \PMod_{g,n-1}$ that fills in the $n^{\text{th}}$ puncture.
For a set $S$, let $\Q[S]$ be the $\Q$-vector space with basis $S$ and let $\tQ[S]$ be the kernel of the augmentation map
$\Q[S] \rightarrow \Q$ taking each element of $S$ to $1$.  The above discussion is summarized in the following corollary.

\begin{maincorollary}
\label{maincorollary:identifysteinberg}
Fix some $g \geq 0$ and $n \geq 2$ such that $\Sigma_{g,n} \notin \{\Sigma_{0,2},\Sigma_{0,3}\}$.  We then have an isomorphism
\[\St(\Sigma_{g,n}) \cong \tQ[\NewC_{g,n}] \otimes \St(\Sigma_{g,n-1})\]
of $\PMod_{g,n}$-modules.
\end{maincorollary}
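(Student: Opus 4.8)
The plan is to deduce the corollary formally from Theorem~\ref{maintheorem:curvecpx} together with the standard behaviour of reduced homology under the simplicial join, so that the only real work is the bookkeeping of degrees and of the group action.

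First I would record the join K\"unneth formula: for simplicial complexes $X$ and $Y$ the reduced (augmented) chain complexes satisfy $\widetilde{C}_{\bullet}(X \ast Y) \cong \widetilde{C}_{\bullet}(X) \otimes \widetilde{C}_{\bullet}(Y)[1]$ (the degree-$1$ shift, up to the usual signs), so that over the field $\Q$ there is a natural isomorphism
\[\RH_{m}(X \ast Y;\Q) \;\cong\; \bigoplus_{i+j = m-1} \RH_{i}(X;\Q) \otimes_{\Q} \RH_{j}(Y;\Q),\]
and naturality makes this isomorphism equivariant for any group acting simplicially on both $X$ and $Y$. I would then apply it with $X = \NewC_{g,n}$, viewed as a discrete ($0$-dimensional) complex, and $Y = \Curves_{g,n-1}$. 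Since $\NewC_{g,n}$ is $0$-dimensional we have $\RH_{i}(\NewC_{g,n};\Q) = 0$ for $i \neq 0$, while $\RH_{0}(\NewC_{g,n};\Q) = \tQ[\NewC_{g,n}]$ by the definition of $\tQ[-]$. Writing $\lambda = \lambda(\Sigma_{g,n})$, a direct check of \eqref{eqn:lambda} --- using that $\Sigma_{g,n-1}$ still carries at least one puncture because $n \geq 2$, and that $\Sigma_{g,n} \notin \{\Sigma_{0,2},\Sigma_{0,3}\}$ --- gives $\lambda(\Sigma_{g,n-1}) = \lambda - 1$ in every remaining case, including the degenerate case $\Sigma_{g,n-1} = \Sigma_{0,3}$, where $\Curves_{0,3} = \emptyset$ and both sides equal $-1$. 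By Harer's identification of the Steinberg module with top curve-complex homology, $\RH_{j}(\Curves_{g,n-1};\Q)$ vanishes unless $j = \lambda - 1$, in which case it equals $\St(\Sigma_{g,n-1})$. Hence, taking $m = \lambda$, the K\"unneth sum collapses to the single summand $(i,j) = (0,\lambda-1)$ and yields
\[\RH_{\lambda}\bigl(\NewC_{g,n} \ast \Curves_{g,n-1};\Q\bigr) \;\cong\; \tQ[\NewC_{g,n}] \otimes_{\Q} \St(\Sigma_{g,n-1}).\]

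Finally I would invoke the $\PMod_{g,n}$-equivariant homotopy equivalence $\Curves_{g,n} \simeq \NewC_{g,n} \ast \Curves_{g,n-1}$ of Theorem~\ref{maintheorem:curvecpx}, in which $\PMod_{g,n}$ acts on $\NewC_{g,n}$ through its action on $\Sigma_{g,n}$ and on $\Curves_{g,n-1}$ through the puncture-filling surjection $\PMod_{g,n} \twoheadrightarrow \PMod_{g,n-1}$. Applying $\RH_{\lambda}(-;\Q)$ and using $\St(\Sigma_{g,n}) = \RH_{\lambda}(\Curves_{g,n};\Q)$ then produces the asserted $\PMod_{g,n}$-equivariant isomorphism. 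I do not expect a serious obstacle: the only points that need care are the degree arithmetic just described and the verification that the two group actions appearing in Theorem~\ref{maintheorem:curvecpx} are exactly the ones making the right-hand factor carry the $\PMod_{g,n}$-module structure pulled back from $\PMod_{g,n-1}$, so that the resulting module is the stated $\tQ[\NewC_{g,n}] \otimes \St(\Sigma_{g,n-1})$ --- both of which are routine once Theorem~\ref{maintheorem:curvecpx} is available.
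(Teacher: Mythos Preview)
Your proposal is correct and follows essentially the same route as the paper's first proof: the paper deduces the corollary directly from Theorem~\ref{maintheorem:curvecpx} by applying the join K\"unneth formula to identify $\RH_{\lambda}(\Curves_{g,n};\Q)$ with $\RH_0(\NewC_{g,n};\Q) \otimes \RH_{\lambda-1}(\Curves_{g,n-1};\Q)$, which is exactly your argument (you are in fact slightly more careful than the paper about the degree bookkeeping and the degenerate case $\Sigma_{g,n-1} = \Sigma_{0,3}$). The paper does also give a genuinely different second proof, via the Birman exact sequence and the Bieri--Eckmann theorem on dualizing modules of extensions, but your approach matches the first one.
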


\p{Alternate proof of corollary}
In addition to the proof of Corollary \ref{maincorollary:identifysteinberg} via
Theorem \ref{maintheorem:curvecpx} described above, we also give an alternate proof
using the Birman exact sequence
\[1 \longrightarrow \pi_1(\Sigma_{g,n-1}) \longrightarrow \PMod_{g,n} \longrightarrow \PMod_{g,n-1} \longrightarrow 1.\]
The free group $\pi_1(\Sigma_{g,n-1})$ is a duality group, and the key 
to our second proof of Corollary \ref{maincorollary:identifysteinberg} is showing
that the dualizing module for $\pi_1(\Sigma_{g,n-1})$ can be identified with
$\tQ[\NewC_{g,n}]$.  This alternate proof of Corollary \ref{maincorollary:identifysteinberg}
is more direct than our proof via Theorem \ref{maintheorem:curvecpx}, but it does
not give the same space-level information that Theorem \ref{maintheorem:curvecpx}
does.  We think that both proofs are enlightening.

\p{Applications to algebraic geometry}
The moduli space $\Moduli_{g,n}$ is a quasi-projective complex variety of dimension
$3g-3+n$.  In \cite{FullartonPutman}, Fullarton--Putman applied their
theorem \eqref{eqn:fullartonputman} to deduce an interesting result about the
algebraic geometry of $\Moduli_g$.  As we describe now, our Theorem \ref{maintheorem:highdim}
allows a generalization of this to $\Moduli_{g,n}$.

For a variety $X$, the {\em coherent cohomological dimension} of $X$, denoted
$\cohcd(X)$, is the maximal dimension $k$ such that there exists a quasi-coherent
sheaf $\mathcal{F}$ on $X$ such that $\HH^k(X;\mathcal{F})\neq 0$.  This measures the geometric
complexity of $X$.  For instance, Serre \cite[Theorem 3.7]{HartshorneBook} proved that $X$ is affine
if and only if $\cohcd(X)=0$.  More generally, if $X$ can be
covered by $m$ open affine subspaces, then Serre's result together with the Mayer--Vietoris
spectral sequence associated to this affine cover implies that $\cohcd(X) \leq m-1$.

Looijenga \cite{Looijenga} conjectured that for $g \geq 2$ the
variety $\Moduli_g$ can be covered by $(g-1)$ open affine subsets, and in particular that
$\cohcd(\Moduli_g) \leq g-2$.  Fullarton--Putman showed how to derive
the opposite inequality $\cohcd(\Moduli_g) \geq g-2$ from \eqref{eqn:fullartonputman}.
In particular $\Moduli_g$ cannot be covered by fewer than $(g-1)$ open affine subsets.
Our generalization of this is as follows:

\begin{maintheorem}
\label{maintheorem:cohcd1}
For $g \geq 2$ and $n \geq 1$, we have $\cohcd(\Moduli_{g,n}) \geq g-1$.
\end{maintheorem}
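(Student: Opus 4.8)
The plan is to mimic Fullarton--Putman's argument for $\cohcd(\Moduli_g) \geq g-2$, replacing their cohomology bound \eqref{eqn:fullartonputman} with our Theorem \ref{maintheorem:highdim}. The key input is a general principle relating the coherent cohomological dimension of a variety $X$ to the ordinary (constructible/singular) cohomology of $X$, valid because $\Moduli_{g,n}$ is smooth as an orbifold (a Deligne--Mumford stack) and admits a nice compactification. Specifically, I would use the fact that if $X$ is a smooth quasi-projective variety (or orbifold) of complex dimension $d$, then by Grothendieck--Serre duality on a smooth compactification together with the long exact sequence of the boundary, a nonvanishing $\HH^k(X;\Q)$ forces $\cohcd(X) \geq k - d$. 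Applying this with $X = \Moduli_{g,n}$, which has $d = 3g-3+n$, and with $k = \nu = \vcd(\PMod_{g,n}) = 4g+n-3$ (here $b=0$), gives $\cohcd(\Moduli_{g,n}) \geq (4g+n-3) - (3g-3+n) = g$. Since we actually only want $g-1$, there is room to spare, which is reassuring: the subtlety below will cost at most one dimension.

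The subtlety is that $\HH^{\nu}(\PMod_{g,n};\Q)$ may well vanish --- indeed that is the expected behavior for $n+b \geq 2$ in many ranges --- so we cannot feed the cohomology of the mapping class group itself into the duality argument; we must feed in the cohomology of the \emph{level subgroup} $\PMod_{g,n}[\ell]$, which by Theorem \ref{maintheorem:highdim} is nonzero in degree $\nu$. Concretely, fix $\ell = p$ prime and let $\tModuli_{g,n}[p] \to \Moduli_{g,n}$ be the finite orbifold cover corresponding to $\PMod_{g,n}[p] \leq \PMod_{g,n}$; this is again a smooth quasi-projective orbifold of the same dimension $3g-3+n$, with $\HH^{\bullet}(\tModuli_{g,n}[p];\Q) \cong \HH^{\bullet}(\PMod_{g,n}[p];\Q)$. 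By Theorem \ref{maintheorem:highdim} (the $n+b\geq 2$ bullet, or the $n+b=1$ bullet when $n=1$), we have $\HH^{\nu}(\tModuli_{g,n}[p];\Q) \neq 0$, and running the duality argument above on $\tModuli_{g,n}[p]$ gives $\cohcd(\tModuli_{g,n}[p]) \geq g$. Finally, one transfers this back down the finite cover: for a finite morphism $f\colon Y \to X$ and a quasi-coherent sheaf $\mathcal G$ on $Y$, we have $\HH^k(Y;\mathcal G) = \HH^k(X;f_*\mathcal G)$, so $\cohcd(X) \geq \cohcd(Y)$. Hence $\cohcd(\Moduli_{g,n}) \geq \cohcd(\tModuli_{g,n}[p]) \geq g \geq g-1$.

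The main obstacle --- and the step requiring the most care --- is making the duality principle ``nonzero rational cohomology in degree $k$ implies $\cohcd \geq k-d$'' precise and correct in the orbifold/stacky setting, since $\Moduli_{g,n}$ and its level covers are not schemes. Rather than work directly on the stack, I would instead pass to a smooth quasi-projective scheme: for $p$ sufficiently large $\PMod_{g,n}[p]$ is torsion-free, so $\tModuli_{g,n}[p]$ is an honest smooth quasi-projective variety, and the stated duality argument (Grothendieck--Serre duality on a smooth compactification plus the excision exact sequence for the boundary divisor, exactly as in Fullarton--Putman) applies verbatim. One then only needs that $\cohcd$ of an orbifold can be detected by a torsion-free finite cover --- which is the finite-pushforward inequality already used above, together with the fact that rationally the orbifold and its torsion-free cover have the relation $\cohcd(\Moduli_{g,n}) \geq \cohcd(\tModuli_{g,n}[p])$ from $f_*$ exactness for finite $f$. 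Everything else is bookkeeping: checking the dimension arithmetic $\nu - d = g$ in the cases at hand, and confirming that Theorem \ref{maintheorem:highdim} indeed guarantees $\HH^{\nu} \neq 0$ for all $g \geq 2$, $n \geq 1$ (which it does, since the product $\prod_{k=1}^{g-1}(p^{2k}-1)p^{2k-1}$ is a positive integer and $\dim_{\Q}\HH^{\nu'}(\PMod_{g,1}[p];\Q) > 0$).
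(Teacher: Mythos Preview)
Your overall strategy coincides with the paper's: pass to a level cover $\Moduli_{g,n}[\ell]$ with $\ell \geq 3$ so that it is a smooth quasi-projective variety, invoke Theorem~\ref{maintheorem:highdim} to get nonvanishing rational cohomology in the virtual cohomological dimension, use the principle that for a smooth quasi-projective variety $X$ of complex dimension $d$ nonvanishing of $\HH^k(X;\C)$ forces $\cohcd(X) \geq k-d$, and transfer back down via the behavior of $\cohcd$ under finite surjective maps (the paper quotes Hartshorne's equality $\cohcd(\Moduli_{g,n}[\ell]) = \cohcd(\Moduli_{g,n})$, though your one-sided inequality from $f_*$-exactness would suffice).

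There are two points that need correction. First, an arithmetic slip: by \eqref{eqn:vcdformula} with $b=0$ the virtual cohomological dimension is $\nu = 4g+n-4$, not $4g+n-3$. With the correct value one gets $\nu - d = (4g+n-4)-(3g-3+n) = g-1$, exactly the claimed bound; there is no ``room to spare'', and indeed your stronger conclusion $\cohcd(\Moduli_{g,n}) \geq g$ is known to be false for $2 \leq g \leq 5$ by Corollary~\ref{maincorollary:cohcd3}. Second, your justification of the key principle via ``Grothendieck--Serre duality on a smooth compactification plus excision'' is vague and would require some care to make precise. The paper instead uses the algebraic Hodge--de~Rham spectral sequence
\[
E_1^{pq} = \HH^q(X;\Omega^p) \Longrightarrow \HH^{p+q}(X;\C),
\]
where the vanishing $E_1^{pq}=0$ for $p>d$ (since $\Omega^p=0$) and for $q>\cohcd(X)$ (since $\Omega^p$ is coherent) immediately gives $\HH^k(X;\C)=0$ whenever $k > d + \cohcd(X)$. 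This is both cleaner and avoids any appeal to a compactification.
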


On first glance, it is striking that this bound is independent of $n$, unlike the bound of Theorem~\ref{maintheorem:highdim}.  
However, using standard techniques from algebraic geometry we can prove the following.

\begin{maintheorem}
\label{maintheorem:cohcd2}
For $g \geq 2$ and $n \geq 1$, we have $\cohcd(\Moduli_{g,n}) \leq \cohcd(\Moduli_g)+1$.
\end{maintheorem}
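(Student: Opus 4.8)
The plan is to exploit the forgetful morphism $\pi \colon \Moduli_{g,n} \to \Moduli_{g,n-1}$ that drops the $n^{\text{th}}$ marked point, and to induct on $n$. The base case $n=1$ is tautological once we interpret $\cohcd(\Moduli_{g,0})$ as $\cohcd(\Moduli_g)$, so the content is entirely in the inductive step: given $\cohcd(\Moduli_{g,n-1}) \leq \cohcd(\Moduli_g)+1$, deduce $\cohcd(\Moduli_{g,n}) \leq \cohcd(\Moduli_g)+1$. Wait --- that is already the bound, so in fact the real inductive assertion is the stronger statement $\cohcd(\Moduli_{g,n}) \leq \cohcd(\Moduli_{g,n-1})$ for $n \geq 2$, which telescopes together with the trivial comparison $\cohcd(\Moduli_{g,1}) \leq \cohcd(\Moduli_g)+1$ (this last one itself coming from $\pi\colon \Moduli_{g,1}\to\Moduli_g$) to give the theorem. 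So the heart of the matter is: \emph{the fibral dimension of $\pi$ does not increase the coherent cohomological dimension once there is already at least one marked point}.

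The mechanism I would use is the Leray spectral sequence for $\pi$ applied to a quasi-coherent sheaf $\mathcal{F}$ on $\Moduli_{g,n}$:
\[
E_2^{p,q} = \HH^p\bigl(\Moduli_{g,n-1}; R^q\pi_\ast \mathcal{F}\bigr) \Longrightarrow \HH^{p+q}(\Moduli_{g,n};\mathcal{F}).
\]
Here $R^q\pi_\ast\mathcal{F}$ is again quasi-coherent since $\pi$ is separated and of finite type (this should be done on the level of the moduli \emph{stacks}, where $\pi$ is the universal curve over $\Moduli_{g,n-1}$, or by passing to a finite cover like $\Moduli_{g,n-1}[\ell]$ where the stack is a scheme --- either way Grothendieck's vanishing theorems apply). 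The fibers of $\pi$ over $\Moduli_{g,n-1}$ are the curves $\Sigma_{g,n-1}$ themselves, which are \emph{affine} curves (a smooth projective curve minus $n-1 \geq 1$ points), so $R^q\pi_\ast\mathcal{F}=0$ for $q \geq 1$ by Serre's affineness criterion applied fiberwise together with cohomology-and-base-change / the theorem on formal functions. Consequently the spectral sequence degenerates to the edge isomorphism $\HH^k(\Moduli_{g,n};\mathcal{F}) \cong \HH^k(\Moduli_{g,n-1};\pi_\ast\mathcal{F})$, and since $\pi_\ast\mathcal{F}$ is a quasi-coherent sheaf on $\Moduli_{g,n-1}$, this vanishes for $k > \cohcd(\Moduli_{g,n-1})$. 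Hence $\cohcd(\Moduli_{g,n}) \leq \cohcd(\Moduli_{g,n-1})$ for $n\geq 2$, and combined with $\cohcd(\Moduli_{g,1})\leq \cohcd(\Moduli_g)+1$ the theorem follows.

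I expect the main obstacle to be the care needed in treating $\Moduli_{g,n}$ as a stack rather than a scheme: $\pi$ is not literally the universal curve unless one is careful about automorphisms, and ``quasi-coherent sheaf'' and ``cohomology'' have to be taken in the appropriate stacky sense (or one passes to a level cover $\Moduli_{g,n}[\ell]$, which is a smooth quasi-projective \emph{scheme} for $\ell$ large, and checks that $\cohcd$ is unchanged by such a finite étale base change --- this requires knowing that pushforward along a finite morphism is exact on quasi-coherent sheaves and that every quasi-coherent sheaf downstairs is a summand of the pushforward of its pullback, which holds rationally since we may average over the deck group; but $\cohcd$ as defined here is about \emph{any} quasi-coherent sheaf, not just $\Q$-sheaves, so one should instead argue directly on the stack or use that finite pushforward both detects and preserves nonvanishing of cohomology). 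A secondary point is justifying the fiberwise affineness input: one needs that each geometric fiber of $\pi$ is an affine scheme and that this forces $R^q\pi_\ast = 0$ in positive degrees for \emph{arbitrary} quasi-coherent $\mathcal{F}$ --- this is standard (e.g.\ it follows from the fact that $\pi$ is an affine morphism, a smooth projective curve minus a nonempty divisor being affine over the base, so $R^q\pi_\ast$ vanishes for $q>0$ on all quasi-coherent sheaves and the Leray spectral sequence collapses for the cleanest possible reason), so once the stacky bookkeeping is handled the argument is short.
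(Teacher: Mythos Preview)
Your proposal is correct and follows essentially the same route as the paper. The paper likewise passes to a level cover $\Moduli_{g,n-1}[\ell]$ (for $\ell\geq 3$) so that the universal curve exists as a scheme, then argues: for $n=1$ the map to $\Moduli_g[\ell]$ is flat projective with one-dimensional fibers, giving the ``$+1$'' via the Leray spectral sequence; for $n\geq 2$ the forgetful map is an \emph{affine} morphism (complement of a relatively ample divisor in the universal curve), so $\cohcd$ does not increase. Two small points where the paper is more careful than your sketch: (i) you initially invoke fiberwise affineness together with ``cohomology-and-base-change'' to kill $R^q\pi_\ast$, but base change theorems for arbitrary quasi-coherent $\cF$ are delicate---your later pivot to ``$\pi$ is an affine morphism'' is the right move and is exactly what the paper does, proving it via relative ampleness of the marked-point divisor; (ii) your $n=1$ step is not ``tautological'' but requires the relative-dimension-one bound on $R^q\pi_\ast$, which the paper handles by first reducing (via a standard argument) to finite-rank locally free $\cF$ so that cohomology-and-base-change applies cleanly. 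Finally, the invariance of $\cohcd$ under finite surjective maps that you discuss is a theorem of Hartshorne used verbatim in the paper, so no averaging argument is needed.
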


Combining Theorems \ref{maintheorem:cohcd1} and \ref{maintheorem:cohcd2}, we see that if
$\cohcd(\Moduli_g) = g-2$, then $\cohcd(\Moduli_{g,n}) = g-1$ for all $n \geq 1$.  
For instance, in light of Fullarton--Putman's aforementioned result this
holds if Looijenga's conjecture is true.
Since Fontanari--Pascolutti \cite{FontanariPascolutti}
have proven Looijenga's conjecture for $2 \leq g \leq 5$, we get the following corollary.

\begin{maincorollary}
\label{maincorollary:cohcd3}
 For $g \geq 2$, if $\cohcd(\Moduli_g) = g-2$, then $\cohcd(\Moduli_{g,n}) = g-1$ for all $n \geq 1$. In particular, if $2 \leq g \leq 5$, then equality holds: $\cohcd(\Moduli_{g,n}) = g-1$. 
\end{maincorollary}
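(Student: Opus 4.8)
The plan is to deduce the corollary by combining Theorems \ref{maintheorem:cohcd1} and \ref{maintheorem:cohcd2} with known results in the closed case, so there is little to do beyond bookkeeping. First I would prove the conditional statement: fix $g \geq 2$ and assume $\cohcd(\Moduli_g) = g-2$. Theorem \ref{maintheorem:cohcd2} gives $\cohcd(\Moduli_{g,n}) \leq \cohcd(\Moduli_g) + 1 = g-1$ for every $n \geq 1$, while Theorem \ref{maintheorem:cohcd1} gives the reverse inequality $\cohcd(\Moduli_{g,n}) \geq g-1$ for every $n \geq 1$. Together these force $\cohcd(\Moduli_{g,n}) = g-1$, which is the first assertion.

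For the ``in particular'' clause I would verify that the hypothesis $\cohcd(\Moduli_g) = g-2$ actually holds when $2 \leq g \leq 5$. The upper bound $\cohcd(\Moduli_g) \leq g-2$ follows from Looijenga's conjecture that $\Moduli_g$ can be covered by $g-1$ open affine subsets, together with Serre's characterization of affine varieties by $\cohcd(\cdot) = 0$ and the Mayer--Vietoris spectral sequence argument recalled in the introduction; Looijenga's conjecture in this range was established by Fontanari--Pascolutti \cite{FontanariPascolutti}. The matching lower bound $\cohcd(\Moduli_g) \geq g-2$ is the theorem of Fullarton--Putman, derived from their cohomological estimate \eqref{eqn:fullartonputman}. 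Hence $\cohcd(\Moduli_g) = g-2$ for $2 \leq g \leq 5$, and applying the conditional statement just proved yields $\cohcd(\Moduli_{g,n}) = g-1$ throughout this range.

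Since the substantive content lives in Theorems \ref{maintheorem:cohcd1} and \ref{maintheorem:cohcd2} (and, for the unconditional range, in the external inputs of Fontanari--Pascolutti and Fullarton--Putman), there is no real obstacle at this stage. The only points to be careful about are that both inequalities in the genus-$\leq 5$ computation of $\cohcd(\Moduli_g)$ are genuinely needed — the upper bound from the affine cover and the lower bound from the high-dimensional cohomology — and that Theorem \ref{maintheorem:cohcd1} is invoked only for $n \geq 1$, which is exactly the hypothesis of the corollary, so the chain of implications is valid precisely in the stated range.
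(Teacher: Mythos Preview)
Your proposal is correct and matches the paper's own reasoning, which is given in the paragraph immediately preceding the corollary rather than as a formal proof: combine Theorems~\ref{maintheorem:cohcd1} and~\ref{maintheorem:cohcd2} for the conditional statement, and for $2 \leq g \leq 5$ invoke Fontanari--Pascolutti's affine cover for the upper bound on $\cohcd(\Moduli_g)$ and Fullarton--Putman for the lower bound.
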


\begin{remark}
The derivation of Theorem \ref{maintheorem:cohcd1} from Theorem \ref{maintheorem:highdim} as well as the proof
of Theorem \ref{maintheorem:cohcd2} use fairly standard techniques from algebraic geometry (though there are some
minor complications arising from the fact that these are only coarse moduli spaces).  To make these results
accessible to topologists, we will give a detailed account of how this works.
\end{remark}

\p{Outline}
In \S \ref{section:reductions}, we make some initial reductions.  In \S \ref{section:proofi} we give
our proof of Theorem \ref{maintheorem:highdim} using the Birman exact sequence.  
Next, in \S \ref{section:steinberg} we give two proofs of Corollary 
\ref{maincorollary:identifysteinberg} (which inductively identifies the Steinberg
module), the first by proving Theorem \ref{maintheorem:curvecpx}
and the second using the Birman exact sequence.  This is followed by
\S \ref{section:proofii}, which proves Theorem \ref{maintheorem:highdim} using
our Corollary \ref{maincorollary:identifysteinberg}.  Finally,
in \S \ref{section:algebraicgeometry} we prove Theorems \ref{maintheorem:cohcd1} and \ref{maintheorem:cohcd2}.

\p{Acknowledgments}
We thank Joan Birman for her contributions to an earlier collaboration, funded in part by the Simons Foundation, which helped to lay the groundwork for this paper. We additionally thank MSRI for its support for the first and second authors in this early phase.  We would also like to thank Benson Farb and the University of Chicago for support and funding; much of the work on this paper was completed while the first author was a visitor there.  Finally, we would like
to thank Eric Riedl and Chris Schommer-Pries for helpful conversations.

\section{Initial reductions: removing the boundary and setting up the induction}
\label{section:reductions}

The goal of this section is to reduce Theorem \ref{maintheorem:highdim} to the following
result.

\begin{maintheoremprime}
\label{maintheorem:highdimprime}
Fix some $g \geq 1$ and $n \geq 2$.
Let $\nu$ be the virtual cohomological dimension of $\PMod_{g,n}$.  For
all $\ell \geq 2$, the dimension of $\HH^{\nu}(\PMod_{g,n}[\ell];\Q)$ is at least
\[\left(\left(n-1\right) \ell^{2g}-1\right) \cdot \dim_{\Q} \HH^{\nu-1}(\PMod_{g,n-1}[\ell];\Q).\]
\end{maintheoremprime}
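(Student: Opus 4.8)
The plan is to exploit the Birman exact sequence
\[
1 \longrightarrow \pi_1(\Sigma_{g,n-1}) \longrightarrow \PMod_{g,n} \longrightarrow \PMod_{g,n-1} \longrightarrow 1
\]
and first upgrade it to a level-$\ell$ version. Let $\pi = \pi_1(\Sigma_{g,n-1})$ be the point-pushing subgroup. One checks that the point-pushing map $\pi \to \PMod_{g,n}$ sends a loop $\gamma$ to a product of two Dehn twists whose action on $\HH_1(\Sigma_{g,n};\Z/\ell)$ is a transvection-type map depending only on the class of $\gamma$ in $\HH_1(\Sigma_{g,n-1};\Z/\ell)$; in particular $\pi \cap \PMod_{g,n}[\ell]$ is the preimage $K$ of a specific subgroup of $\HH_1(\Sigma_{g,n-1};\Z/\ell) \cong (\Z/\ell)^{2g+n-2}$ under the abelianization $\pi \twoheadrightarrow (\Z/\ell)^{2g+n-2}$. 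The first real task is to pin down $K$: I expect that the relevant subgroup of $(\Z/\ell)^{2g+n-2}$ is the kernel of the intersection pairing with the loop around the $n$th puncture together with compatibility with the $\PMod_{g,n-1}[\ell]$-action, so that $\pi/K$ is noncanonically $(\Z/\ell)^{2g}$ — giving the factor $\ell^{2g}$ — but I would prefer to phrase everything in terms of the genuinely canonical Steinberg/dualizing-module description rather than a basis. This yields a short exact sequence
\[
1 \longrightarrow K \longrightarrow \PMod_{g,n}[\ell] \longrightarrow Q \longrightarrow 1,
\]
where $Q$ is a finite-index subgroup of $\PMod_{g,n-1}[\ell]$ (or can be arranged to equal it after a harmless adjustment of $\ell$; one must be slightly careful here, since a priori $Q$ could be a proper subgroup, and I would either argue $Q = \PMod_{g,n-1}[\ell]$ directly or absorb the discrepancy into the already-present constant).

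Next I would run the duality. Since $K$ is a finite-index subgroup of the free group $\pi$, it is a free group, hence a $\Q$-duality group of dimension $1$, with dualizing module $\RH_0$ of the boundary of a tree — concretely $\HH^1_c(\text{something})$, but the clean statement is: $\HH_c^1(K;\Q)$ is the relevant dualizing $\Q[Q]$-module. The ends of $K$ correspond exactly to the curves in $\NewC_{g,n}$ that become trivial on filling the $n$th puncture — this is precisely the content underlying Corollary \ref{maincorollary:identifysteinberg} — so the dualizing module of $K$ is $\tQ[\NewC_{g,n}]$ as a $\Q[Q]$-module, and as a $\Q$-vector space its dimension is $|\NewC_{g,n}/K| $ worth of data, which works out to rank $(n-1)\ell^{2g} - 1$: the point is that $K$ has $(n-1)\ell^{2g}$ ends (one branch of the tree for each coset representative after accounting for the puncture-permutation-free normalization), so $\dim_\Q \tQ[\text{ends}] = (n-1)\ell^{2g} - 1$. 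Establishing this ends count is, I expect, the main obstacle: it requires identifying $\pi/K$ precisely and checking that the covering-space picture of the Birman kernel has the claimed index, and the bookkeeping with punctures versus boundary and the augmentation is where sign/off-by-one errors lurk.

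With the dualizing module in hand, I would feed the short exact sequence into the Lyndon–Hochschild–Serre spectral sequence for group homology with the Steinberg module $\St(\Sigma_{g,n})$ as coefficients — or, dually, directly use that an extension of duality groups is a duality group with dualizing module the tensor product of the dualizing modules. Concretely, $\PMod_{g,n}[\ell]$ is a virtual duality group of dimension $\nu$, and
\[
\HH^\nu(\PMod_{g,n}[\ell];\Q) \cong \big(\St(\Sigma_{g,n})\big)_{\PMod_{g,n}[\ell]} \cong \big(\tQ[\NewC_{g,n}] \otimes \St(\Sigma_{g,n-1})\big)_{\PMod_{g,n}[\ell]}
\]
using Corollary \ref{maincorollary:identifysteinberg}. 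Taking coinvariants in two stages — first over $K$, which acts trivially on $\St(\Sigma_{g,n-1})$ and turns $\tQ[\NewC_{g,n}]$ into a $\Q[Q]$-module $M$ of the dimension computed above, then over $Q$ — I would bound $\dim_\Q (M \otimes \St(\Sigma_{g,n-1}))_Q$ from below by $\dim_\Q M^{Q} \cdot \dim_\Q \St(\Sigma_{g,n-1})_Q$ or, more robustly, by choosing a $Q$-equivariant surjection $M \twoheadrightarrow \Q^{\,(n-1)\ell^{2g}-1}$ onto a trivial module (this exists because $\NewC_{g,n}$ splits into $\PMod$-orbits in a controlled way and one can project onto the span of orbit-sum vectors, of which there are enough — this is the step where one must verify the trivial-module quotient really has the full dimension $(n-1)\ell^{2g}-1$ and not less). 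That gives
\[
\dim_\Q \HH^\nu(\PMod_{g,n}[\ell];\Q) \geq \big((n-1)\ell^{2g} - 1\big)\cdot \dim_\Q \big(\St(\Sigma_{g,n-1})\big)_Q.
\]
Finally, since $\big(\St(\Sigma_{g,n-1})\big)_Q$ surjects onto $\big(\St(\Sigma_{g,n-1})\big)_{\PMod_{g,n-1}[\ell]} \cong \HH^{\nu-1}(\PMod_{g,n-1}[\ell];\Q)$ (coinvariants is right-exact and $Q \leq \PMod_{g,n-1}[\ell]$), and $\nu - 1$ is indeed the vcd of $\PMod_{g,n-1}[\ell]$ by the formula \eqref{eqn:vcdformula}, we obtain the claimed lower bound. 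The delicate points to get right are (i) the precise identification of $K$ and the index $(n-1)\ell^{2g}$, and (ii) ensuring the two successive coinvariant quotients do not lose dimension, for which the cleanest route is to produce an explicit $\PMod_{g,n}$-equivariant surjection from $\St(\Sigma_{g,n})$ onto $\Q^{(n-1)\ell^{2g}-1} \otimes \HH^{\nu-1}(\PMod_{g,n-1}[\ell];\Q)$ with trivial action on the first factor.
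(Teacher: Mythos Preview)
Your proposal is essentially the paper's second proof (\S\ref{section:proofii}): apply Bieri--Eckmann duality, invoke Corollary~\ref{maincorollary:identifysteinberg} to decompose $\St(\Sigma_{g,n})\cong\tQ[\NewC_{g,n}]\otimes\St(\Sigma_{g,n-1})$, and surject $\tQ[\NewC_{g,n}]$ onto the trivial module $\tQ[\NewC_{g,n}/\PMod_{g,n}[\ell]]$. The orbit count you flag as ``the main obstacle'' is the paper's Lemma~\ref{lemma:orbits}, proved by sending an arc in $\NewC_{g,n}^k$ to its relative homology class in $\HH_1(\Sigma_g,P;\Z/\ell)$ and checking this gives a bijection from $\NewC_{g,n}^k/\PMod_{g,n}[\ell]$ onto a coset of $\HH_1(\Sigma_g;\Z/\ell)$.

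Two corrections. First, your hedge about $Q$ is unnecessary: Theorem~\ref{theorem:birmanlevel} proves the level-$\ell$ Birman sequence is exact on the right, so $Q=\PMod_{g,n-1}[\ell]$ on the nose; surjectivity is obtained by lifting through $\PMod_{g,n-2}^1[\ell]$. Second, the sentence ``$K$ has $(n-1)\ell^{2g}$ ends'' and the claim that its dualizing module has dimension $(n-1)\ell^{2g}-1$ are wrong as stated: $K$ is a nonabelian free group, hence has a Cantor set of ends, and its dualizing module $\tQ[\NewC_{g,n}]$ is infinite-dimensional. What is finite is the number of \emph{cusps} of the cover $\tSigma$ of $\Sigma_{g,n-1}$ with $\pi_1(\tSigma)=K$, equivalently the number of $\PMod_{g,n}[\ell]$-orbits on $\NewC_{g,n}$. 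Your actual argument in the final paragraph does not rely on the misstatement, so there is no gap.

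The paper also gives a first proof (\S\ref{section:proofi}) that your preamble gestures toward but does not carry out: run Hochschild--Serre for the level Birman sequence to identify $\HH^{\nu}(\PMod_{g,n}[\ell];\Q)\cong\HH^{\nu-1}(\PMod_{g,n-1}[\ell];\HH^1(\tSigma;\Q))$, then use the puncture sequence $0\to\HH^1(\tSigma';\Q)\to\HH^1(\tSigma;\Q)\to\Q^{(n-1)\ell^{2g}-1}\to 0$ (with $\PMod_{g,n-1}[\ell]$ acting trivially on the quotient, since it fixes the deck group) together with vanishing above the vcd to produce the surjection. Either route works; you have essentially sketched both and executed the second.
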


We will give two proofs of Theorem \ref{maintheorem:highdimprime}, one in \S \ref{section:proofi} and the other in \S \ref{section:proofii}.  The rest of this section is devoted
to explaining why it implies Theorem \ref{maintheorem:highdim}.  
The first step is the following lemma, which reduces us to the case of surfaces without
boundary.

\begin{lemma}
\label{lemma:eliminateboundary}
Fix some $g \geq 1$ and $n,b \geq 0$.
Let $\nu$ be the virtual cohomological dimension of $\PMod_{g,n}^b$ and
let $\nu'$ be the virtual cohomological dimension of $\PMod_{g,n+b}$.  Then for all
$\ell \geq 2$ we have
$\HH^{\nu}(\PMod_{g,n}^b[\ell];\Q) \cong \HH^{\nu'}(\PMod_{g,n+b}[\ell];\Q)$.
\end{lemma}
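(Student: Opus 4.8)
The plan is to use the standard relationship between mapping class groups of surfaces with boundary and with punctures, namely the capping-and-gluing operations that connect $\PMod_{g,n}^b$ to $\PMod_{g,n+b}$. The key tool is the exact sequence relating the mapping class group of a surface with a boundary component to that of the surface where the boundary is capped off with a once-punctured disk: capping each of the $b$ boundary components with a once-punctured disk gives a surjection $\PMod_{g,n}^b \twoheadrightarrow \PMod_{g,n+b}$ whose kernel is generated by the $b$ Dehn twists about curves parallel to the (now-capped) boundary components, so that the kernel is a central $\Z^b$. One then has a central extension
\[
1 \longrightarrow \Z^b \longrightarrow \PMod_{g,n}^b \longrightarrow \PMod_{g,n+b} \longrightarrow 1.
\]
First I would check that the boundary-parallel Dehn twists lie in $\PMod_{g,n}^b[\ell]$ — they act trivially on $\HH_1(\Sigma_{g,n}^b;\Z/\ell)$ since a curve bounding a disk (with punctures) is null-homologous — so restricting the extension yields a central extension
\[
1 \longrightarrow \Z^b \longrightarrow \PMod_{g,n}^b[\ell] \longrightarrow \PMod_{g,n+b}[\ell] \longrightarrow 1.
\]

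Next I would feed this into the Gysin-type / Lyndon–Hochschild–Serre spectral sequence for the central extension with $\Q$-coefficients. Since $\Z^b$ is a duality group of cohomological dimension $b$, and $\PMod_{g,n+b}[\ell]$ is a virtual duality group of vcd equal to $\nu'$, the extension $\PMod_{g,n}^b[\ell]$ has vcd $\nu = \nu' + b$ (this is consistent with the formulas in \eqref{eqn:vcdformula} and \eqref{eqn:lambda}: passing from $b$ boundary components to $b$ extra punctures changes $2b$ to $b$ in the vcd, a drop of $b$). The spectral sequence reads
\[
E_2^{p,q} = \HH^p\!\left(\PMod_{g,n+b}[\ell]; \HH^q(\Z^b;\Q)\right) \Longrightarrow \HH^{p+q}(\PMod_{g,n}^b[\ell];\Q).
\]
Because the extension is central, $\PMod_{g,n+b}[\ell]$ acts trivially on $\HH^q(\Z^b;\Q) \cong \Q^{\binom{b}{q}}$, so $E_2^{p,q} \cong \HH^p(\PMod_{g,n+b}[\ell];\Q)^{\oplus \binom{b}{q}}$. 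In total degree $\nu = \nu' + b$, the only potentially nonzero entry on the $E_2$-page with $p \leq \nu'$ (the vcd of $\PMod_{g,n+b}[\ell]$) and $q \leq b$ is $(p,q) = (\nu', b)$, which gives $E_2^{\nu',b} \cong \HH^{\nu'}(\PMod_{g,n+b}[\ell];\Q)$. All differentials into and out of this entry vanish for degree reasons (the source/target lies outside the vcd range or the $q$-range), so $E_\infty^{\nu',b} = E_2^{\nu',b}$, and there are no other contributions to total degree $\nu$, giving the claimed isomorphism $\HH^{\nu}(\PMod_{g,n}^b[\ell];\Q) \cong \HH^{\nu'}(\PMod_{g,n+b}[\ell];\Q)$.

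The main obstacle — really the one place requiring care rather than routine bookkeeping — is verifying that the relevant differentials vanish and that no other $E_2^{p,q}$ with $p+q = \nu$ survives. The worry is entries with $q < b$ but $p > \nu'$; these vanish precisely because $\nu'$ is the cohomological dimension of $\PMod_{g,n+b}[\ell]$ over $\Q$ (it is a finite-index subgroup of the virtual duality group $\PMod_{g,n+b}$, hence itself a $\Q$-duality group with $\cohcd_{\Q} = \nu'$). So I would first record that $\cohcd_{\Q}(\PMod_{g,n+b}[\ell]) = \nu'$, then note $E_2^{p,q} = 0$ for $p > \nu'$, which immediately collapses total degree $\nu = \nu' + b$ onto the single entry $(\nu', b)$ and forces $E_\infty^{\nu',b} = E_2^{\nu',b}$. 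One small bookkeeping point to confirm along the way is the arithmetic $\nu = \nu' + b$; this follows either from the explicit formula \eqref{eqn:vcdformula} or, more conceptually, from the fact that $\nu = \cohcd_\Q(\PMod_{g,n}^b[\ell])$ and the spectral sequence has top nonzero entry in degree $\nu' + b$.
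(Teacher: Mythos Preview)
Your proof is correct and follows essentially the same approach as the paper: both use the central extension $1 \to \Z^b \to \PMod_{g,n}^b[\ell] \to \PMod_{g,n+b}[\ell] \to 1$ coming from capping boundary components with once-punctured disks, and then run the Hochschild--Serre spectral sequence to see that the only contribution in total degree $\nu = \nu' + b$ is $E_2^{\nu',b} \cong \HH^{\nu'}(\PMod_{g,n+b}[\ell];\Q)$. The paper's argument is identical in structure; the only point it makes slightly more explicit is that the restriction $\PMod_{g,n}^b[\ell] \to \PMod_{g,n+b}[\ell]$ is surjective (which follows because capping induces an isomorphism on $\HH_1$ with $\Z/\ell$ coefficients, so $\PMod_{g,n}^b[\ell]$ is the full preimage of $\PMod_{g,n+b}[\ell]$).
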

\begin{proof}
There is a surjective map $\rho\colon \PMod_{g,n}^b \rightarrow \PMod_{g,n+b}$ obtained by gluing punctured
discs to each component of $\partial \Sigma_{g,n}^b$ and extending mapping classes by the identity.
The kernel of $\rho$ is the central subgroup $\Z^b$ generated by Dehn twists about the components
of $\partial \Sigma_{g,n}^b$.  This is where we use that $g \geq 1$; indeed, if $g=0$ and $n+b \leq 2$, then there are cases where some of these Dehn twists are either trivial or equal to each
other.  Each of these Dehn twists lies in $\PMod_{g,n}^b[\ell]$, and
$\rho$ restricts to a surjection $\PMod_{g,n}^b[\ell] \rightarrow \PMod_{g,n+b}[\ell]$.  In other words,
we have a central extension
\[1 \longrightarrow \Z^{b} \longrightarrow \PMod_{g,n}^b[\ell] \longrightarrow \PMod_{g,n+b}[\ell] \longrightarrow 1.\]
The associated Hochschild--Serre spectral sequence converging to $\HH^{\bullet}(\PMod_{g,n}^b[\ell];\Q)$ has
\begin{equation}
\label{eqn:eliminatebdry}
E_2^{pq} = \HH^p(\PMod_{g,n+b}[\ell]; \HH^q(\Z^b;\Q)) \cong \HH^p(\PMod_{g,n+b}[\ell];\Q) \otimes \wedge^q \Q^b.
\end{equation}
By Harer's computation of the virtual cohomological dimension of the mapping class group \eqref{eqn:vcdformula}, 
we have $\nu = \nu' + b$.  The only term of \eqref{eqn:eliminatebdry} with $p+q = \nu'+b$ that can possibly be nonzero is
\[E_2^{\nu',b} = \HH^{\nu'}(\PMod_{g,n+b}[\ell];\Q) \otimes \wedge^b \Q^b \cong \HH^{\nu'}(\PMod_{g,n+b}[\ell];\Q).\]
No nonzero differentials can come into or out of this, so it survives to the $E_{\infty}$-page.  We conclude that
\[\HH^{\nu}(\PMod_{g,n}^b[\ell];\Q) = \HH^{\nu'+b}(\PMod_{g,n}^b[\ell];\Q) \cong \HH^{\nu'}(\PMod_{g,n+b}[\ell];\Q). \qedhere\]
\end{proof}

\begin{proof}[Proof of Theorem \ref{maintheorem:highdim}, assuming Theorem \ref{maintheorem:highdimprime}]
We start by recalling the setup.  Let $g \geq 1$ and $n,b \geq 0$ be such
that $n+b \geq 1$.
Let $\nu$ be the virtual cohomological dimension of $\PMod_{g,n}^b$.  Fix some
$\ell \geq 2$.  We must prove the following two things:
\begin{compactitem}
\item If $n+b = 1$ and $p$ is a prime dividing $\ell$,
then the dimension of $\HH^{\nu}(\PMod_{g,n}^b[\ell];\Q)$ is at least 
\begin{equation}
\label{eqn:onetoprove}
\frac{1}{g} p^{2g-1} \prod_{k=1}^{g-1} (p^{2k}-1)p^{2k-1}.
\end{equation}
\item If $n+b \geq 2$ and $\nu'$ is the virtual cohomological dimension of
$\PMod_{g,1}$, then the dimension of $\HH^{\nu}(\PMod_{g,n}^b[\ell];\Q)$ is at least
\[\left(\prod_{k=1}^{b+n-1}\left(k \ell^{2g}-1\right)\right) \cdot \dim_{\Q} \HH^{\nu'}(\PMod_{g,1}[\ell];\Q).\]
\end{compactitem}

For the cases where $n+b = 1$, Harer's computation \eqref{eqn:vcdformula} of the virtual cohomological dimension
of the mapping class group says that it is $4g-3$ for $\PMod_{g,1}$, is $4g-2$ for $\PMod_g^1$, and is
$4g-5$ for $\PMod_g$.  Lemma \ref{lemma:eliminateboundary} along with Bieri--Eckmann duality shows
that
\begin{align*}
\HH^{4g-2}(\PMod_g^1[\ell];\Q) &\cong \HH^{4g-3}(\PMod_{g,1}[\ell];\Q) \\
&\cong \HH_0(\PMod_{g,1}[\ell];\St(\Sigma_{g,1})) = (\St(\Sigma_{g,1}))_{\PMod_{g,1}[\ell]},
\end{align*}
where the subscript indicates that we are taking coinvariants.  Harer \cite{HarerDuality} also proved that the
map $\Curves_{g,1} \rightarrow \Curves_g$ induced by the map that deletes the puncture is a homotopy equivalence.  This
implies that we have a $\PMod_{g,1}$-equivariant isomorphism $\St(\Sigma_{g,1}) \cong \St(\Sigma_g)$, where
$\PMod_{g,1}$ acts on $\St(\Sigma_g)$ via the induced map $\PMod_{g,1} \rightarrow \PMod_g$.  
Continuing the previous calculation and applying Bieri--Eckmann duality again, we see that
\[(\St(\Sigma_{g,1}))_{\PMod_{g,1}[\ell]} \cong (\St(\Sigma_g))_{\PMod_g[\ell]} \cong \HH^{4g-5}(\PMod_g[\ell];\Q).\]
Fullarton--Putman \cite{FullartonPutman} proved that the dimension of this is at least the quantity in \eqref{eqn:onetoprove}.

We now turn to the cases where $n+b \geq 2$.
Let $\nu''$ be the virtual cohomological dimension of $\PMod_{g,n+b}$.  Harer's
computation \eqref{eqn:vcdformula}
of the virtual cohomological dimension of the mapping class group
implies that adding a puncture to a non-closed surface causes the virtual
cohomological dimension to go up by $1$.  It follows that
\[\nu'' = \nu' + (b+n-1).\]
The inequality we must prove now follows from Lemma \ref{lemma:eliminateboundary}
along with repeated applications of Theorem \ref{maintheorem:highdimprime} as follows:
\begin{align*}
\dim_{\Q} \HH^{\nu}(\PMod_{g,n}^b[\ell];\Q) &= \dim_{\Q} \HH^{\nu'+(b+n-1)}(\PMod_{g,n+b}[\ell];\Q) \\
&\geq \left(\left(n+b-1\right) \ell^{2g} - 1\right) \cdot \dim_{\Q} \HH^{\nu'+(b+n-2)}(\PMod_{g,n+b-1}[\ell];\Q) \\
&\geq \left(\left(n+b-1\right) \ell^{2g} - 1\right) \cdot \left(\left(n+b-2\right) \ell^{2g} - 1\right)\\
&\quad\quad\quad\quad\quad\quad \cdot \dim_{\Q} \HH^{\nu'+(b+n-3)}(\PMod_{g,n+b-2}[\ell];\Q) \\
&\geq \cdots \geq \left(\prod_{k=1}^{b+n-1}\left(k \ell^{2g}-1\right)\right) \cdot \dim_{\Q} \HH^{\nu'}(\PMod_{g,1}[\ell];\Q). \qedhere
\end{align*} 
\end{proof}

\section{Proof I: via the Birman exact sequence}
\label{section:proofi}

This section contains our first proof of Theorem \ref{maintheorem:highdimprime}.  The
proof is in \S \ref{section:highbirman}, which is preceded by the preliminary
\S \ref{section:birman} on the Birman exact sequence.

\subsection{The Birman exact sequence}
\label{section:birman}

We start by reviewing the Birman exact sequence \cite{BirmanSeq}.
Fix some $g \geq 0$ and $n \geq 1$ such that $\Sigma_{g,n} \notin \{\Sigma_{1,1}, \Sigma_{0,3}\}$.
There is a surjective map $\PMod_{g,n} \rightarrow \PMod_{g,n-1}$ that fills
in the $n^{\text{th}}$ puncture.  Its kernel, denoted $\PP_{g,n}$, is the
{\em point-pushing subgroup} and satisfies $\PP_{g,n} \cong \pi_1(\Sigma_{g,n-1})$.  This
is where we use our assumption on $\Sigma_{g,n}$; indeed, in the degenerate cases the group
$\PP_{g,n}$ would be trivial.  Informally, elements of $\PP_{g,n}$ ``push'' the
$n^{\text{th}}$ puncture around paths in $\Sigma_{g,n-1}$.  This is all summarized
in the Birman exact sequence
\[1 \longrightarrow \PP_{g,n} \longrightarrow \PMod_{g,n} \longrightarrow \PMod_{g,n-1} \longrightarrow 1.\]
See \cite[\S 4.2]{FarbMargalitPrimer} for a textbook reference.  

We wish to generalize this to $\PMod_{g,n}[\ell]$.  From our definitions, it is clear
that the surjection $\PMod_{g,n} \rightarrow \PMod_{g,n-1}$ restricts to a map
$\rho\colon \PMod_{g,n}[\ell] \rightarrow \PMod_{g,n-1}[\ell]$, though it is not clear that this
restriction is surjective.  Define $\PP_{g,n}[\ell] = \PP_{g,n} \cap \PMod_{g,n}[\ell]$.
We thus have an exact sequence
\[1 \longrightarrow \PP_{g,n}[\ell] \longrightarrow \PMod_{g,n}[\ell] \stackrel{\rho}{\longrightarrow} \PMod_{g,n-1}[\ell].\]
The following theorem provides a Birman exact sequence for level-$\ell$ subgroups; in particular it asserts that $\rho$ is indeed surjective and also gives
a description of $\PP_{g,n}[\ell]$.  

\begin{theorem}
\label{theorem:birmanlevel}
Fix some $g \geq 0$ and $n \geq 1$ such that 
$\Sigma_{g,n} \notin \{\Sigma_{1,1}, \Sigma_{0,3}\}$.  For all $\ell \geq 2$, we have a short exact sequence
\[1 \longrightarrow \PP_{g,n}[\ell] \longrightarrow \PMod_{g,n}[\ell] \stackrel{\rho}{\longrightarrow} \PMod_{g,n-1}[\ell] \longrightarrow 1.\]
Moreover, $\PP_{g,n}[\ell]$ is as follows:
\begin{compactitem}
\item If $n = 1$, then $\PP_{g,n}[\ell] = \PP_{g,n}$.
\item If $n > 1$, then $\PP_{g,n}[\ell]$ is the kernel of the map
\[\PP_{g,n} \cong \pi_1(\Sigma_{g,n-1}) \longrightarrow \HH_1(\Sigma_{g};\Z/\ell)\]
arising from the map $\Sigma_{g,n-1} \rightarrow \Sigma_g$ that fills in the punctures.
\end{compactitem}
\end{theorem}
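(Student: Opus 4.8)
The plan is to establish the two assertions — surjectivity of $\rho$ and the identification of $\PP_{g,n}[\ell]$ — essentially simultaneously by a diagram chase built on the ordinary Birman exact sequence. First I would set up the commutative diagram of short exact sequences whose top row is the ordinary Birman sequence $1 \to \PP_{g,n} \to \PMod_{g,n} \to \PMod_{g,n-1} \to 1$ and whose vertical maps are the homology representations: $\PMod_{g,n} \to \Aut(\HH_1(\Sigma_{g,n};\Z/\ell))$ and $\PMod_{g,n-1} \to \Aut(\HH_1(\Sigma_{g,n-1};\Z/\ell))$. The level subgroups are by definition the kernels of these representations. The key point is to understand how $\PP_{g,n}$ acts on $\HH_1(\Sigma_{g,n};\Z/\ell)$: a point-pushing map along a loop $\gamma$ acts on homology by a transvection-type formula (the "point-pushing formula", see \cite[\S 4.2]{FarbMargalitPrimer}), and in particular its action on $\HH_1(\Sigma_{g,n};\Z/\ell)$ is trivial if and only if the class of $\gamma$ dies in $\HH_1(\Sigma_g;\Z/\ell)$ — when $n=1$ the ambient homology already equals $\HH_1(\Sigma_{g,1};\Z/\ell)=\HH_1(\Sigma_g;\Z/\ell)$ and point-pushing acts trivially (since a point-push along a loop acts by a product of commuting transvections that cancel in homology, or more simply because $\PP_{g,1}$ lies in the Torelli group), while for $n>1$ the puncture loops contribute and the action is genuinely nontrivial. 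This computation is what yields the stated description of $\PP_{g,n}[\ell] = \Ker(\pi_1(\Sigma_{g,n-1}) \to \HH_1(\Sigma_g;\Z/\ell))$ in both cases.

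Next I would prove surjectivity of $\rho$. Given $\bar f \in \PMod_{g,n-1}[\ell]$, lift it to some $f \in \PMod_{g,n}$ via the ordinary Birman sequence; the obstruction to $f$ lying in $\PMod_{g,n}[\ell]$ is its action on $\HH_1(\Sigma_{g,n};\Z/\ell)$. Using the short exact sequence of $\Z/\ell$-modules
\[
0 \longrightarrow K \longrightarrow \HH_1(\Sigma_{g,n};\Z/\ell) \longrightarrow \HH_1(\Sigma_{g,n-1};\Z/\ell) \longrightarrow 0,
\]
where $K$ is the (free, rank-one-when-$n\geq 2$) kernel coming from the extra puncture, one sees that $f$ already acts trivially on the quotient (because $\bar f$ does) and on the sub (by naturality), so it acts "unipotently"; the residual obstruction is a cocycle class in $\HH^1$ of the relevant $\Z/\ell$-module of homomorphisms, and I would show that this obstruction can be killed by multiplying $f$ by a suitable point-pushing element — precisely because $\PP_{g,n}$ surjects onto the group of such transvections. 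Concretely, the point-pushing formula shows that conjugation/multiplication by point-pushes realizes exactly the needed correction terms, so after adjusting $f$ by an element of $\PP_{g,n}$ we land in $\PMod_{g,n}[\ell]$, proving $\rho$ surjective. Exactness at the other spots is automatic from the ordinary Birman sequence together with the definition $\PP_{g,n}[\ell] = \PP_{g,n}\cap\PMod_{g,n}[\ell]$.

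The main obstacle I anticipate is the bookkeeping in the point-pushing action on $\Z/\ell$-homology: one must pin down exactly which transvections arise from $\PP_{g,n}$ and check that they generate enough of $\Aut(\HH_1(\Sigma_{g,n};\Z/\ell))$ relative to the image of $\PMod_{g,n-1}$ to kill every possible lifting obstruction — equivalently, that the relevant connecting map in the long exact (co)homology sequence vanishes. This is where the two cases $n=1$ and $n>1$ genuinely diverge, and where one must be careful that the degenerate surfaces $\Sigma_{1,1}$ and $\Sigma_{0,3}$ (with $\PP_{g,n}$ trivial) are correctly excluded by hypothesis. Once the point-pushing action is understood, the rest is a routine five-lemma-style chase.
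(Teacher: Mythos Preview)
Your approach is correct and close in spirit to the paper's, but the two diverge on the surjectivity argument in a way worth noting.

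For identifying $\PP_{g,n}[\ell]$, you work directly with the action of point-pushing on $\HH_1(\Sigma_{g,n};\Z/\ell)$; the paper instead passes to the Poincar\'{e}--Lefschetz dual picture (Lemma \ref{lemma:identifyh1}) and computes the action on the relative group $\HH_1(\Sigma_g,P;\Z/\ell)$, where the point-pushing formula is slightly cleaner to read off. These are equivalent computations.

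For surjectivity, the paper does \emph{not} lift and correct. Instead it uses a short geometric trick: embed $\Sigma_{g,n-2}^1 \hookrightarrow \Sigma_{g,n}$ so that the complement is a twice-punctured disc containing the $(n-1)^{\text{st}}$ and $n^{\text{th}}$ punctures; this induces a map $\PMod_{g,n-2}^1[\ell] \to \PMod_{g,n}[\ell]$, and the composite with $\rho$ is already known to surject onto $\PMod_{g,n-1}[\ell]$ (it is the boundary-capping map from Lemma \ref{lemma:eliminateboundary}). This sidesteps the obstruction calculation entirely.

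Your lift-and-correct argument also works, and your ``main obstacle'' is exactly the right thing to worry about. To close it: the obstruction for a lift $f$ of $\bar f$ is a functional $\phi \in \Hom(\HH_1(\Sigma_{g,n-1};\Z/\ell),K)$, while the point-pushing transvections realize only those $\phi$ vanishing on the puncture classes $[c_1],\ldots,[c_{n-1}]$ (since $i(\gamma,-)$ factors through $\HH_1(\Sigma_g;\Z/\ell)$). The missing observation is that $\phi$ automatically lies in this subspace: any $f \in \PMod_{g,n}$ fixes each $[c_i]$, so $\phi([c_i])=0$. Once you say this, the correction goes through and your argument is complete. The paper's embedding trick is shorter, but your route has the virtue of making the role of the point-pushing formula fully explicit.
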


The third author has proven a similar theorem for the Torelli group
(on a surface with boundary but no punctures) 
\cite[Theorem 1.2]{PutmanCutPaste} and for a variant
of the level-$\ell$ subgroup in some special cases in
\cite[Theorem 2.10]{PutmanH2Level}.  Since our proof is not dramatically
different from these previous proofs, we will omit some routine verifications.

Before we prove Theorem \ref{theorem:birmanlevel}, we need a lemma.

\begin{lemma}
\label{lemma:identifyh1}
Fix some $g \geq 0$ and $n \geq 1$ and $\ell \geq 2$, and let $P$ be a set of $n$ distinct points on $\Sigma_g$.  Identify
$P$ with the punctures of $\Sigma_{g,n}$, so $\PMod_{g,n}$ acts on $\HH_1(\Sigma_g,P;\Z/\ell)$.  Then $\PMod_{g,n}[\ell]$
is the kernel of the action of $\PMod_{g,n}$ on $\HH_1(\Sigma_g,P;\Z/\ell)$.
\end{lemma}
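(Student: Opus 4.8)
The plan is to identify $\HH_1(\Sigma_{g,n};\Z/\ell)$ with the $\Z/\ell$-linear dual of $\HH_1(\Sigma_g,P;\Z/\ell)$ as a $\PMod_{g,n}$-module, and then to invoke the elementary fact that a finite-dimensional representation over $\Z/\ell$ and its contragredient have the same kernel. Granting the equivariant duality isomorphism, the lemma is immediate: if $V$ is a finitely generated free $\Z/\ell$-module with a $\PMod_{g,n}$-action, then $f \in \PMod_{g,n}$ acts trivially on $V$ if and only if it acts trivially on $\Hom_{\Z/\ell}(V,\Z/\ell)$, since the matrix of $f$ on the dual is the inverse transpose of its matrix on $V$, and an invertible matrix over $\Z/\ell$ equals the identity exactly when its inverse transpose does. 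Taking $V = \HH_1(\Sigma_g,P;\Z/\ell)$, the kernel of the action of $\PMod_{g,n}$ on $\HH_1(\Sigma_g,P;\Z/\ell)$ then coincides with the kernel of its action on $\HH_1(\Sigma_{g,n};\Z/\ell)$, which by definition is $\PMod_{g,n}[\ell]$.

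So the work is in producing the duality isomorphism. Identify $\Sigma_{g,n}$ with $\Sigma_g \setminus P$, and let $W$ be the compact oriented genus-$g$ surface with $n$ boundary circles obtained by deleting open disk neighborhoods of the points of $P$, so that $W \simeq \Sigma_{g,n}$ and $(W,\partial W)$ computes $\HH_\bullet(\Sigma_g,P)$. Poincaré–Lefschetz duality for $W$ supplies a perfect pairing
\[\HH_1(\Sigma_{g,n};\Z) \times \HH_1(\Sigma_g,P;\Z) \longrightarrow \Z,\]
concretely given by the algebraic intersection number of a loop in the punctured surface with a relative $1$-cycle (an arc with endpoints in $P$). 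Both groups are free abelian — the first because $\Sigma_{g,n}$ is homotopy equivalent to a wedge of circles for $n \geq 1$, the second because the long exact sequence of the pair exhibits it as an extension of the free group $\RH_0(P;\Z)$ by the free group $\HH_1(\Sigma_g;\Z)$ — so reducing mod $\ell$ preserves perfectness, and we obtain a perfect $\Z/\ell$-bilinear pairing $\HH_1(\Sigma_{g,n};\Z/\ell) \times \HH_1(\Sigma_g,P;\Z/\ell) \to \Z/\ell$, i.e. an isomorphism $\HH_1(\Sigma_{g,n};\Z/\ell) \cong \Hom_{\Z/\ell}(\HH_1(\Sigma_g,P;\Z/\ell),\Z/\ell)$.

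The one point that needs care is that this isomorphism is $\PMod_{g,n}$-equivariant. I would get this from the intersection-pairing description: elements of $\PMod_{g,n}$ are represented by orientation-preserving diffeomorphisms of $\Sigma_g$ fixing $P$ pointwise, hence act on $\Sigma_{g,n}$, on the pair $(\Sigma_g,P)$, and compatibly on all the homology groups, and they preserve algebraic intersection numbers since they are orientation-preserving; equivalently, the Lefschetz isomorphism is cap product with the fundamental class of $W$, which is natural under such diffeomorphisms. (One could also dispatch the trivial cases first: when $n=1$ both modules are just $\HH_1(\Sigma_g;\Z/\ell)$, and when $g=0$ the group $\PMod_{0,n}$ acts trivially on both sides.) This is the only real obstacle; everything else is formal.
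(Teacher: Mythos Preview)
Your proof is correct and is essentially the same argument as the paper's: both combine Poincar\'e--Lefschetz duality for the compact surface with boundary with the observation that dualizing a representation over $\Z/\ell$ does not change its kernel. The paper just runs the two steps in the other order (first dualizing $\HH_1(\Sigma_{g,n};\Z/\ell)$ to $\HH^1(\Sigma_{g,n};\Z/\ell)$, then applying Lefschetz duality to identify the latter with $\HH_1(\Sigma_g,P;\Z/\ell)$) and is terser about the equivariance, which you spell out more carefully.
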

\begin{proof}
By definition, $\PMod_{g,n}[\ell]$ is the kernel of the action on $\HH_1(\Sigma_{g,n};\Z/\ell)$.  Dualizing, this is
the same as the kernel of the action on
$\HH^1(\Sigma_{g,n};\Z/\ell)$.  Let $Q \subset \Sigma_{g,n}$ be a set
consisting of disjoint closed regular neighborhoods of the punctures.  Poincar\'{e}--Lefshetz duality implies that
\[\HH^1(\Sigma_{g,n};\Z/\ell) \cong \HH_1(\Sigma_{g,n},Q;\Z/\ell) \cong \HH_1(\Sigma_g,P;\Z/\ell),\]
where the second isomorphism arising from collapsing each component of $Q$ to a point.  The lemma follows.
\end{proof}

\begin{proof}[Proof of Theorem \ref{theorem:birmanlevel}]
The case $n=1$ follows from the fact that the map $\HH_1(\Sigma_{g,1};\Z/\ell) \rightarrow \HH_1(\Sigma_g;\Z/\ell)$
arising from filling in the puncture is an isomorphism.  We can thus assume that $n \geq 2$.

We first prove that $\rho$ is surjective.
Let $\iota\colon \Sigma_{g,n-2}^1 \hookrightarrow \Sigma_{g,n}$
be an embedding such that $\Sigma_{g,n} \setminus \iota(\Sigma_{g,n-2}^1)$ is an open disc
containing the $n^{\text{th}}$ and $(n-1)^{\text{st}}$ punctures.  We then get
an induced map $\iota_{\ast}\colon \PMod_{g,n-2}^1 \rightarrow \PMod_{g,n}$ that extends
mapping classes over this twice-punctured disc by the identity.  It is immediate from
the definitions that $\iota_{\ast}$ restricts to a map 
$\PMod_{g,n-2}^1[\ell] \rightarrow \PMod_{g,n}[\ell]$.  The composition
\[\PMod_{g,n-2}^1[\ell] \stackrel{\iota_{\ast}}{\longrightarrow} \PMod_{g,n}[\ell] \stackrel{\rho}{\longrightarrow} \PMod_{g,n-1}[\ell]\]
can be identified with the surjective map whose kernel is generated by the Dehn twist
about $\partial \Sigma_{g,n-2}^1$ that arose in Lemma \ref{lemma:eliminateboundary}.
It follows that $\rho$ is surjective.

We now prove that
\begin{equation}
\label{eqn:pptoprove}
\PP_{g,n}[\ell] = \ker\left(\PP_{g,n} \cong \pi_1\left(\Sigma_{g,n-1}\right) \longrightarrow \HH_1\left(\Sigma_{g};\Z/\ell\right)\right).
\end{equation}
Let $P \subset \Sigma_g$ be a set of $n$ distinct points, which we identify with
the punctures of $\Sigma_{g,n}$.  By Lemma \ref{lemma:identifyh1}, the group $\PP_{g,n}[\ell]$ is the kernel
of the action of $\PP_{g,n}$ on $\HH_1(\Sigma_g,P;\Z/\ell)$.  To understand this
action, consider the group
$\hMod_{g,n}[\ell] = \rho^{-1}(\PMod_{g,n-1}[\ell])$, which fits into a short exact sequence
\[1 \longrightarrow \PP_{g,n} \longrightarrow \hMod_{g,n}[\ell] \stackrel{\rho}{\longrightarrow} \PMod_{g,n-1}[\ell] \longrightarrow 1.\]
As we will see, the action of $\hMod_{g,n}[\ell]$ on $\HH_1(\Sigma_g,P;\Z/\ell)$ 
induces a representation whose image is abelian and rather simple.

Let $P' \subset P$ be the first $(n-1)$ punctures.
We have the following isomorphisms: 
\begin{alignat*}{2}
&\HH_1(P,P';\Z/\ell)        &&\cong 0, \\
&\HH_0(P,P';\Z/\ell)        &&\cong \Z/\ell, \\
&\HH_0(\Sigma_g,P';\Z/\ell) &&\cong 0.
\end{alignat*}
The long exact sequence in homology for the triple $(\Sigma_g,P,P')$ thus contains the segment
\begin{equation}
\label{eqn:theexact}
0 \longrightarrow \HH_1(\Sigma_g,P';\Z/\ell) \longrightarrow \HH_1(\Sigma_g,P;\Z/\ell) \stackrel{\partial}{\longrightarrow} \Z/\ell \longrightarrow 0.
\end{equation}
Since $\PMod_{g,n}$ does not permute the points in $P$, its action on $\HH_1(\Sigma_g,P;\Z/\ell)$ preserves this exact sequence.  Its
action on $\HH_1(\Sigma_g,P';\Z/\ell)$ factors through $\rho\colon \PMod_{g,n} \rightarrow \PMod_{g,n-1}$,
and its action on $\Z/\ell$ is trivial.  

It follows that the image of $\hMod_{g,n}[\ell]$ in $\Aut(\HH_1(\Sigma_g,P;\Z/\ell))$ lies
in the subgroup $\Lambda$ consisting of automorphisms 
that preserve the exact sequence \eqref{eqn:theexact} and that act trivially on its kernel
and cokernel.  For $\zeta \in \Hom(\Z/\ell,\HH_1(\Sigma_g,P';\Z/\ell))$, there is a corresponding
automorphism in $\Lambda \subset \Aut(\HH_1(\Sigma_g,P;\Z/\ell))$ defined via the formula
\[x \mapsto x+\zeta(\partial(x)) \quad \quad (x \in \HH_1(\Sigma_g,P;\Z/\ell)).\] 
This correspondence gives rise to an isomorphism, and hence we have
\[\Lambda \cong \Hom(\Z/\ell,\HH_1(\Sigma_g,P';\Z/\ell)) \cong \HH_1(\Sigma_g,P';\Z/\ell).\]

We now return to the subgroup $\PP_{g,n}$ of $\hMod_{g,n}[\ell]$.  Tracing
through the definitions, we see that the restriction of the map
\[\hMod_{g,n}[\ell] \longrightarrow \Lambda \cong \HH_1(\Sigma_g,P';\Z/\ell)\]
to the point-pushing subgroup $\PP_{g,n}$ is precisely the map
\[\PP_{g,n} \longrightarrow \HH_1(\Sigma_g;\Z/\ell) \subset \HH_1(\Sigma_g,P';\Z/\ell)\]
featured in \eqref{eqn:pptoprove}.  The identity \eqref{eqn:pptoprove} follows.
\end{proof}

\subsection{High-dimensional cohomology and the Birman exact sequence}
\label{section:highbirman}

We now use Theorem \ref{theorem:birmanlevel} to prove Theorem \ref{maintheorem:highdimprime}.

\begin{proof}[Proof of Theorem \ref{maintheorem:highdimprime}]
We start by recalling the statement.  Consider some $g \geq 1$ and $n \geq 2$.
Let $\nu$ be the virtual cohomological dimension of $\PMod_{g,n}$.  Finally, fix
some $\ell \geq 2$.  We must prove that $\dim_{\Q} \HH^{\nu}(\PMod_{g,n}[\ell];\Q)$
is at least
\begin{equation}
\label{eqn:birmantoprove}
\left(\left(n-1\right) \ell^{2g}-1\right) \cdot \dim_{\Q} \HH^{\nu-1}(\PMod_{g,n-1}[\ell];\Q).
\end{equation}
We will prove this using the Birman exact sequence from Theorem \ref{theorem:birmanlevel}.

This Birman exact sequence is of the form
\[1 \longrightarrow \PP_{g,n}[\ell] \longrightarrow \PMod_{g,n}[\ell] \longrightarrow \PMod_{g,n-1}[\ell] \longrightarrow 1.\]
The associated Hochschild--Serre spectral sequence converging to $\HH^{\bullet}(\PMod_{g,n}[\ell];\Q)$ has
\begin{equation}
\label{eqn:highbirmanss}
E_2^{pq} = \HH^p(\PMod_{g,n-1}[\ell]; \HH^q(\PP_{g,n}[\ell];\Q)).
\end{equation}
Since 
\[\PP_{g,n}[\ell] \subset \PP_{g,n} \cong \pi_1(\Sigma_{g,n-1})\]
is a free group, its cohomological dimension is $1$.  Also, by Harer's
computation \eqref{eqn:vcdformula} of the virtual cohomological dimension of
the mapping class group, the virtual cohomological dimension of $\PMod_{g,n-1}$ is
$\nu-1$.  It follows that 
the only term of \eqref{eqn:highbirmanss} with $p+q = \nu$ that can possibly be nonzero is
\[E_2^{\nu-1,1} = \HH^{\nu-1}(\PMod_{g,n-1}[\ell];\HH^1(\PP_{g,n}[\ell];\Q)).\]
No nonzero differentials can come into or out of this, so it survives to the $E_{\infty}$-page.  We conclude that
\[\HH^{\nu}(\PMod_{g,n}[\ell];\Q) \cong \HH^{\nu-1}(\PMod_{g,n-1}[\ell];\HH^1(\PP_{g,n}[\ell];\Q)).\]
It is enough, therefore, to prove that $\dim_{\Q} \HH^{\nu-1}(\PMod_{g,n-1}[\ell];\HH^1(\PP_{g,n}[\ell];\Q))$ is at least the quantity \eqref{eqn:birmantoprove}. 

By Theorem \ref{theorem:birmanlevel}, the group $\PP_{g,n}[\ell]$ is the fundamental group of the cover $\tSigma$
of $\Sigma_{g,n-1}$ corresponding to the surjection
\begin{equation}
\label{eqn:coversurjection}
\pi_1(\Sigma_{g,n-1}) \rightarrow \HH_1(\Sigma_{g};\Z/\ell)
\end{equation}
arising from the map $\Sigma_{g,n-1} \rightarrow \Sigma_g$ that fills in the punctures of $\Sigma_{g,n-1}$.  We thus
have $\HH^1(\PP_{g,n}[\ell];\Q) \cong \HH^1(\tSigma;\Q)$.

Since loops around punctures of $\Sigma_{g,n-1}$ lie in the kernel of \eqref{eqn:coversurjection}, the deck group
$\HH_1(\Sigma_g;\Z/\ell)$ acts freely on the punctures of $\tSigma$.  We deduce that $\tSigma$ has
\[(n-1) \cdot |\HH_1(\Sigma_g;\Z/\ell)| = (n-1) \ell^{2g}\]
punctures.  Let $\tSigma'$ be the closed surface obtained by filling in the punctures of $\tSigma$.  We then
have a short exact sequence
\begin{equation}
\label{eqn:punctureseq}
0 \longrightarrow \HH^1(\tSigma';\Q) \longrightarrow \HH^1(\tSigma;\Q) \longrightarrow \Q^{(n-1) \ell^{2g}-1} \longrightarrow 0
\end{equation}
of $\PMod_{g,n-1}[\ell]$-modules whose cokernel arises from the action on the punctures.  Since $\PMod_{g,n-1}[\ell]$ acts
trivially on the deck group $\HH_1(\Sigma_g;\Z/\ell)$, its action on $\Q^{(n-1) \ell^{2g}-1}$ is trivial.

The long exact sequence in $\PMod_{g,n-1}[\ell]$-cohomology associated to the short exact sequence \eqref{eqn:punctureseq} 
contains the segment
\begin{align*}
\HH^{\nu-1}(\PMod_{g,n-1}[\ell];\HH^1(\tSigma;\Q)) \longrightarrow &\HH^{\nu-1}(\PMod_{g,n-1}[\ell];\Q^{(n-1) \ell^{2g}-1}) \\
&\quad\quad\quad\quad\longrightarrow \HH^{\nu}(\PMod_{g,n-1}[\ell];\HH^1(\tSigma';\Q)).
\end{align*}
Since $\nu-1$ is the virtual cohomological dimension of $\PMod_{g,n-1}[\ell]$, we have 
\[\HH^{\nu}(\PMod_{g,n-1}[\ell];\HH^1(\tSigma';\Q)) = 0.\]
We conclude that the central arrow of the composition
\begin{align*}
\HH^{\nu-1}(\PMod_{g,n-1}[\ell];\HH^1(\PP_{g,n}[\ell];\Q)) \cong &\HH^{\nu-1}(\PMod_{g,n-1}[\ell];\HH^1(\tSigma;\Q))\\ 
\rightarrow &\HH^{\nu-1}(\PMod_{g,n-1}[\ell];\Q^{(n-1) \ell^{2g}-1})\\
\cong &\left(\HH^{\nu-1}(\PMod_{g,n-1}[\ell];\Q)\right)^{\oplus ((n-1) \ell^{2g}-1)}.
\end{align*}
is surjective.  It follows that the dimension of $\HH^{\nu-1}(\PMod_{g,n-1}[\ell];\HH^1(\PP_{g,n}[\ell];\Q))$ (and hence also the dimension of
$\HH^{\nu}(\PMod_{g,n}[\ell];\Q)$) is at least the quantity \eqref{eqn:birmantoprove}, as desired.
\end{proof}

\section{Identifying the Steinberg module}
\label{section:steinberg}

This section contains our two proofs of Corollary \ref{maincorollary:identifysteinberg},
which gives an inductive description of the Steinberg module on a punctured surface.
The first is in \S \ref{section:identifycurve}, which proves the stronger
Theorem \ref{maintheorem:curvecpx} which gives an inductive description of the
curve complex.  The second is in \S \ref{section:identifystein}, which works
directly with the Steinberg module.

\subsection{Inductive description of the curve complex}
\label{section:identifycurve}

In this section, we prove Theorem \ref{maintheorem:curvecpx}, which relates
$\Curves_{g,n}$ to $\Curves_{g,n-1}$.  As we described in the introduction, this
implies Corollary \ref{maincorollary:identifysteinberg}.  

We start by proving a technical result.  Stating it will require introducing two pieces of notation.
\begin{compactitem}
\item If $X$ and $Y$ are simplicial complexes, then $X \ast Y$ denotes the join
of $X$ and $Y$.  By definition, this is the simplicial complex whose vertices
are $X^{(0)} \sqcup Y^{(0)}$ and whose simplices correspond to sets $\sigma$ of vertices
such that $\sigma \cap X^{(0)}$ (resp.\ $\sigma \cap Y^{(0)}$) is either $\emptyset$
or a simplex of $X$ (resp.\ $Y$).
\item If $Z$ is a simplicial complex and $v \in Z^{(0)}$ is a vertex of $Z$, then
$\link_Z(v)$ is the subcomplex of $Z$ whose simplices correspond to sets $\sigma$ of vertices
such that $\sigma \cup \{v\}$ is a simplex of $Z$ but such that $v \notin \sigma$.
\end{compactitem}
Our technical result is then as follows.

\begin{lemma}
\label{lemma:identifyjoin}
Let $X$ be a simplicial complex, let $I$ be a discrete set, and let $Y$ be a subcomplex
of the join $I \ast X$ such that $I, X \subset Y$.  Assume that for all $i \in I$, the
inclusion $\link_Y(i) \hookrightarrow X$ is a homotopy equivalence.  Then 
the inclusion $Y \hookrightarrow I \ast X$ is a homotopy equivalence.
\end{lemma}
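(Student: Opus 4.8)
The plan is to construct $I \ast X$ out of the subcomplex $Y$ by enlarging, one vertex $i \in I$ at a time, the ``partial cone'' $\{i\} \ast \link_Y(i)$ that sits inside $Y$ to the full cone $\{i\} \ast X$ that sits inside $I \ast X$, and to check that each such enlargement is a homotopy equivalence via the gluing lemma for homotopy equivalences---in particular, the fact that a pushout of a trivial cofibration is again a trivial cofibration. Since every map in sight is an inclusion of subcomplexes, there are no point-set or cofibrancy subtleties to address.

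First I would record the relevant combinatorics. Because $I$ is discrete, every simplex of $I \ast X$ contains at most one vertex of $I$; hence $I \ast X = X \cup \bigcup_{i \in I} (\{i\} \ast X)$, the closed star of each $i$ is the cone $\{i\} \ast X$, and $(\{i\} \ast X) \cap (\{j\} \ast X) = X$ whenever $i \ne j$. Likewise $\link_Y(i)$ is a subcomplex of $X$ for each $i$, and $Y = X \cup \bigcup_{i \in I}(\{i\} \ast \link_Y(i))$. The key local point is that
\[
A_i := (\{i\} \ast X) \cap Y = X \cup (\{i\} \ast \link_Y(i)),
\]
so that $A_i$ is obtained from $X$ by coning off the subcomplex $\link_Y(i)$. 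Applying the gluing lemma to the pushout $\{i\} \ast \link_Y(i) \hookleftarrow \link_Y(i) \hookrightarrow X$, whose left leg is a cofibration and whose right leg is a homotopy equivalence by hypothesis, shows that the cone $\{i\} \ast \link_Y(i)$ maps by a homotopy equivalence to $A_i$; hence $A_i$ is contractible. As $\{i\} \ast X$ is contractible as well, the inclusion $A_i \hookrightarrow \{i\} \ast X$ is a homotopy equivalence---indeed a trivial cofibration.

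Now I would run the induction. Well-order $I$ as $\{i_\alpha\}_{\alpha < \kappa}$, set $Y_0 = Y$ and $Y_{\alpha+1} = Y_\alpha \cup (\{i_\alpha\} \ast X)$, and take $Y_\lambda = \bigcup_{\alpha < \lambda} Y_\alpha$ at limit ordinals, so that $Y_\kappa = I \ast X$. From the intersection identities above one gets $Y_\alpha \cap (\{i_\alpha\} \ast X) = A_{i_\alpha}$, so $Y_{\alpha+1}$ is the pushout of $Y_\alpha \hookleftarrow A_{i_\alpha} \hookrightarrow \{i_\alpha\} \ast X$; since the right leg is a trivial cofibration, so is $Y_\alpha \hookrightarrow Y_{\alpha+1}$. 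Because trivial cofibrations of CW complexes are closed under transfinite composition (concretely: $\pi_\ast$ commutes with the relevant colimit, so $\pi_\ast(Y) \to \pi_\ast(I \ast X)$ is an isomorphism, and one invokes Whitehead), the inclusion $Y = Y_0 \hookrightarrow Y_\kappa = I \ast X$ is a homotopy equivalence. When $I$ is finite this is just a finite string of applications of the gluing lemma; for general $I$ one can alternatively realize $I \ast X$ in a single step as the pushout of $Y \hookleftarrow \bigsqcup_{i} A_i \hookrightarrow \bigsqcup_{i}(\{i\} \ast X)$ and push out the coproduct trivial cofibration.

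The one place requiring care---the ``main obstacle''---is that $I$ need not be finite (in the application to Theorem~\ref{maintheorem:curvecpx} the set $\NewC_{g,n}$ is typically infinite), so the iteration must be organized transfinitely with unions taken at limit stages, and one must appeal to stability of (trivial) cofibrations under transfinite composition rather than iterating the gluing lemma only finitely often. Apart from this bookkeeping, the proof is a routine unwinding of the definitions of the simplicial join and the link, together with the single observation that coning off a homotopy-equivalent subcomplex produces a contractible complex.
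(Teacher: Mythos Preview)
Your proof is correct and takes a genuinely different route from the paper's. The paper argues via open covers: it chooses covers $\{U_j\}_{j \in I \sqcup \{0\}}$ of $Y$ and $\{V_j\}_{j \in I \sqcup \{0\}}$ of $I \ast X$ built from the cones $\{j\} \ast \link_Y(j)$ and $\{j\} \ast X$ (with $U_0, V_0$ the complements of $I$), verifies that all finite intersections $U_{j_1}\cap\cdots\cap U_{j_k} \hookrightarrow V_{j_1}\cap\cdots\cap V_{j_k}$ are homotopy equivalences, and then appeals to Hatcher's Corollary~4K.2 to conclude. Your argument instead organizes the passage from $Y$ to $I \ast X$ as a transfinite sequence of pushouts along the trivial cofibrations $A_i \hookrightarrow \{i\} \ast X$, using stability of trivial cofibrations under pushout and transfinite composition.

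The two approaches trade off in the expected way. The paper's cover argument handles all of $I$ at once and needs only the single citation to Hatcher, with no case distinction on the cardinality of $I$; on the other hand it requires identifying and checking the three types of intersections. Your pushout argument makes the local mechanism (coning off a homotopy-equivalent subcomplex) more transparent and fits naturally into model-categorical language, at the cost of either well-ordering $I$ and invoking transfinite composition, or (as you note) packaging everything into the single pushout $Y \hookleftarrow \bigsqcup_i A_i \hookrightarrow \bigsqcup_i (\{i\}\ast X)$. Both proofs ultimately rest on the same contractibility of $A_i = X \cup (\{i\}\ast\link_Y(i))$, and both deliver the $\PMod_{g,n}$-equivariance needed downstream since the inclusion $Y \hookrightarrow I \ast X$ itself is equivariant.
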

\begin{proof}
It is enough to find open covers 
$\{U_{j}\}_{j \in J}$ of $Y$
and $\{V_{j}\}_{j \in J}$ of $I \ast X$ with the following two properties  \cite[Corollary 4K.2]{HatcherBook} :
\begin{enumerate}
\item For all $j \in J$, we have $U_j \subset V_j$.
\item For all $k \geq 1$ and all distinct $j_1,\ldots,j_k \in J$, the inclusion 
$U_{j_1} \cap \cdots \cap U_{j_k} \hookrightarrow V_{j_1} \cap \cdots \cap V_{j_k}$
is a homotopy equivalence
\end{enumerate}
For $i \in I$, let $X_i = \link_Y(i)$.
Set $J = I \sqcup \{0\}$.  We define the covers $\{ U_j \}$ of $Y$ and $\{V_j \}$ of $ I \ast X$ as follows.  To begin, we set $U_0 = Y \setminus I$ and $V_0 = (I \ast X) \setminus I$.
Next, for $j \in I$, we set $U_j = (\{j\} \ast X_j) \setminus X_j$ and $V_j = (\{j\} \ast X) \setminus X$.
We emphasize in these definitions that the $U_j$ and $V_j$ are open sets and not
subcomplexes.  

It follows immediately from the definitions that $U_j \subset V_j$ for all $j \in J$, so it remains to check the second condition above.  First, we note that for all distinct $i_1, i_2 \in I$, we have $U_{i_1} \cap U_{i_2} = \emptyset$ and $V_{i_1} \cap V_{i_2} = \emptyset$.  There are thus just three cases to check.  
\begin{itemize}
\item[{\bf Case 1:}] The inclusion $U_0 \hookrightarrow V_0$ is a homotopy equivalence since both $U_0$ and $V_0$ deformation
retract to $X$.
\item[{\bf Case 2:}] For $j \in I$, the inclusion $U_j \hookrightarrow V_j$ is a homotopy equivalence since both $U_j$ and
$V_j$ are contractible.
\item[{\bf Case 3:}] For $j \in I$, the inclusion $U_0 \cap U_j \hookrightarrow V_0 \cap V_j$ can
be seen to be a homotopy equivalence as follows.  By construction,
$U_0 \cap U_j \cong X_j \times (0,1)$ and
$V_0 \cap V_j \cong X \times (0,1)$.  Under these identifications, the
inclusion $U_0 \cap U_j \hookrightarrow V_0 \cap V_j$ is identified with the
inclusion $X_j \times (0,1) \hookrightarrow X \times (0,1)$ induced by inclusion
$X_j \hookrightarrow X$.  This is a homotopy equivalence by assumption.\qedhere
\end{itemize}
\end{proof}

\begin{proof}[Proof of Theorem \ref{maintheorem:curvecpx}]
Let us first recall what we must prove.  Fix some $g \geq 0$ and $n \geq 2$ such 
that $\Sigma_{g,n} \notin \{\Sigma_{0,2},\Sigma_{0,3}\}$.
Recall that $\NewC_{g,n}$ is the set of vertices of $\Curves_{g,n}$ consisting
of simple closed curves $\gamma$ on $\Sigma_{g,n}$ that bound a twice-punctured
disc one of whose punctures is the $n^{\text{th}}$ one.  We remark that
our assumption $\Sigma_{g,n} \notin \{\Sigma_{0,2},\Sigma_{0,3}\}$ ensures
that such curves are nontrivial.  Our goal is to
construct a $\PMod_{g,n}$-equivariant homotopy equivalence
$\Curves_{g,n} \simeq \NewC_{g,n} \ast \, \Curves_{g,n-1}$.

We start with the following observation.  Consider some $\gamma \in \NewC_{g,n}$.
Let $D \subset \Sigma_{g,n}$ be the closed disc bounded by $\gamma$, so $D$ contains
two punctures.  Since
$\link_{\Curves_{g,n}}(\gamma)$ is the full subcomplex of $\Curves_{g,n}$ spanned
by nontrivial curves in $\Sigma_{g,n} \setminus D \cong \Sigma_{g,n-1}$, we have
that $\link_{\Curves_{g,n}}(\gamma) \cong \Curves_{g,n-1}$.

Define $\cX_{g,n}$ to be the full subcomplex of $\Curves_{g,n}$ spanned by the
vertices that do {\em not} lie in $\NewC_{g,n}$.  
Since any two curves in $\NewC_{g,n}$ must intersect and thus are not joined by an edge in
$\Curves_{g,n}$, we have
\begin{equation}
\label{eqn:includecurves}
\Curves_{g,n} \subset \NewC_{g,n} \ast \, \cX_{g,n}.
\end{equation}
Below we will prove that for all $\gamma \in \NewC_{g,n}$, the
inclusion $\link_{\Curves_{g,n}}(\gamma) \hookrightarrow \cX_{g,n}$ is a homotopy equivalence.
Having done this, we can apply Lemma \ref{lemma:identifyjoin} and conclude that 
the inclusion \eqref{eqn:includecurves} is a homotopy equivalence.  

From this, the theorem can be deduced as follows.  Deleting the $n^{\text{th}}$
puncture defines a map $\pi\colon \cX_{g,n} \rightarrow \Curves_{g,n-1}$.  Choose some
$\gamma \in \NewC_{g,n}$.  Since the inclusion
$\link_{\Curves_{g,n}}(\gamma) \hookrightarrow \cX_{g,n}$ is a homotopy equivalence and the composition
\begin{equation}
\label{eqn:linkiso}
\link_{\Curves_{g,n}}(\gamma) \hookrightarrow \cX_{g,n} \stackrel{\pi}{\rightarrow} \Curves_{g,n-1}
\end{equation}
is an isomorphism, the map $\pi\colon \cX_{g,n} \rightarrow \Curves_{g,n-1}$ is a homotopy
equivalence.  We thus have a sequence of homotopy equivalences
\[\Curves_{g,n} \simeq \NewC_{g,n} \ast \, \cX_{g,n} \simeq \NewC_{g,n} \ast \, \Curves_{g,n-1},\]
as desired.

Fixing some $\gamma \in \NewC_{g,n}$, it remains to prove that the inclusion
$\iota\colon \link_{\Curves_{g,n}}(\gamma) \hookrightarrow \cX_{g,n}$ is a homotopy equivalence.  Since
the composition \eqref{eqn:linkiso} is an isomorphism, the map $\iota$ induces an injection
on homotopy groups.  It is thus enough to prove that it induces a surjection as well.  Fixing
some $m \geq 0$ and some simplicial complex structure on $S^m$, 
let $\psi\colon S^m \rightarrow \cX_{g,n}$ be a simplicial map.  Our goal is to homotope
$\psi$ such that its image lies in $\link_{\Curves_{g,n}}(\gamma)$.  

Let $\alpha$ be an arc in the disc bounded by $\gamma$ that connects the two punctures
in that disc:\\
\centerline{\psfig{file=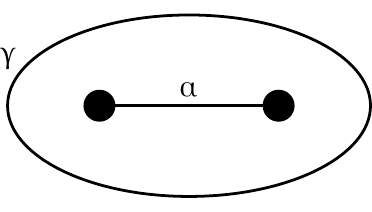,scale=100}}
The curve $\gamma$ is thus the boundary of a regular neighborhood of $\alpha$, and a simple
closed curve on $\Sigma_{g,n}$ is isotopic to a curve that is disjoint from $\gamma$
if and only if it is isotopic to a curve that is disjoint from $\alpha$.  It
follows that it is enough to homotope $\psi$ such that for all vertices $v$ of $S^m$,
the curve $\psi(v)$ can be isotoped so as to be disjoint from $\alpha$.  We
will do this using the ``Hatcher flow'' idea introduced in \cite{HatcherFlow}.

Let $v_1,\ldots,v_r$ be the vertices of $S^m$.  For each $1 \leq i \leq r$, pick
a simple closed curve representative $\delta_i$ of the isotopy class $\psi(v_i)$ with
the following key property:
\begin{compactitem}
\item For all distinct $1 \leq i,j \leq r$, if $\psi(v_i)$ and $\phi(v_j)$ are isotopic
to disjoint simple closed curves, then $\delta_i$ and $\delta_j$ are disjoint.  This holds
in particular if $v_i$ and $v_j$ are adjacent in $S^m$.
\end{compactitem}
For instance, fixing a hyperbolic metric on $\Sigma_{g,n}$, we can let $\delta_i$
be the geodesic in the isotopy class $\psi(v_i)$ and then if $\delta_i = \delta_j$
for some $i \neq j$ perturb them slightly so as to be disjoint.  Perturbing
the $\delta_i$ slightly, we can assume that they intersect $\alpha$ transversely
and that no two of the $\delta_i$ intersect $\alpha$ in the same point.

Let $p_1,\ldots,p_s$ be the intersection points of the $\delta_i$ with $\alpha$, enumerated
in their natural order starting from the one closest to the endpoint of
$\alpha$ at the $n^{\text{th}}$ puncture.  For $1 \leq j \leq s$, let $\delta_{i_j}$ be the simple closed curve
containing $p_j$.  As in the following figure, we can ``slide'' these intersection 
points off of the arc $\alpha$ one at a time by pulling them over the $n^{\text{th}}$ puncture:\\
\centerline{\psfig{file=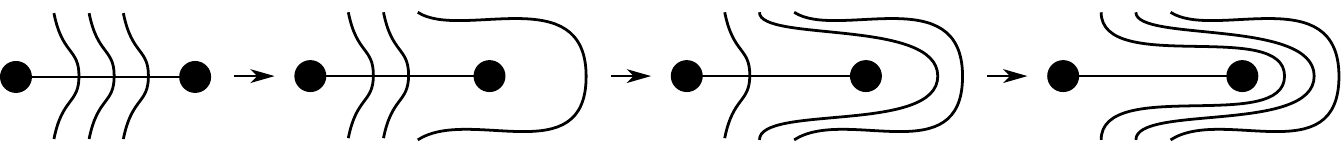,scale=100}}
When $p_j$ is slid off, the curve $\delta_{i_j}$ in $\cX_{g,n}$ is replaced with a new curve $\delta'_{i_j}$.  
The new curve $\delta'_{i_j}$ also lies in $\cX_{g,n}$.  Indeed, 
being in $\cX_{g,n}$ is equivalent to remaining nontrivial
after deleting the $n^{\text{th}}$ puncture, and after
deleting this puncture $\delta_{i_j}$ and $\delta'_{i_j}$ become homotopic.

Set $\psi_0 = \psi$, and for $1 \leq j \leq s$, let $\psi_j\colon S^m \rightarrow \cX_{g,n}$
be the simplicial map obtained from $\psi$ by sliding off $p_1,\ldots,p_j$ as
above (and thus changing $\delta_{i_1},\ldots,\delta_{i_j}$; of course, some
of the $\delta_i$ will be changed multiple times in this process).  

It is clear
from our construction that $\psi_j\colon S^m \rightarrow \cX_{g,n}$ is homotopic to $\psi_{j+1}\colon S^m \rightarrow \cX_{g,n}$ 
for $0 \leq j < s$, so 
$\psi = \psi_0$ is homotopic to $\psi_s$.  Since each simple closed
curve in the image of $\psi_s$ is disjoint from $\alpha$, the image of $\psi_s$
lies in $\link_{\Curves_{g,n}}(\gamma)$, as desired.
\end{proof}

\subsection{Directly identifying the Steinberg module}
\label{section:identifystein}

In this section, we give a direct proof of Corollary \ref{maincorollary:identifysteinberg}.
Recall that this asserts that for $g \geq 0$ and $n \geq 2$ such that 
$\Sigma_{g,n} \notin \{\Sigma_{0,2},\Sigma_{0,3}\}$, we have
\[\St(\Sigma_{g,n}) \cong \tQ[\NewC_{g,n}] \otimes \St(\Sigma_{g,n-1}).\]
Here $\St(\Sigma_{g,n})$ and $\St(\Sigma_{g,n-1})$ are the dualizing modules for 
the $\Q$-duality groups $\PMod_{g,n}$ and $\PMod_{g,n-1}$,
respectively.  We will deduce this from the Birman exact sequence
\begin{equation}
\label{eqn:steinbirman}
1 \longrightarrow \pi_1(\Sigma_{g,n-1}) \longrightarrow \PMod_{g,n} \longrightarrow \PMod_{g,n-1} \longrightarrow 1
\end{equation}
discussed in \S \ref{section:birman}.

\p{Reduction}
The free group $\pi_1(\Sigma_{g,n-1})$ is a $\Q$-duality group (in fact, even
a $\Z$-duality group) of dimension $1$.
Letting $D$ be
the dualizing module for the $\Q$-duality group $\pi_1(\Sigma_{g,n-1})$, we can apply a basic
theorem of Bieri--Eckmann (see \cite[Theorem 9.10]{BieriBook}) about extensions
of $\Q$-duality groups to \eqref{eqn:steinbirman} and deduce that the dualizing module
$\St(\Sigma_{g,n})$ of $\PMod_{g,n}$ is isomorphic to
$D \otimes \St(\Sigma_{g,n-1})$.  Here $\PMod_{g,n}$ acts on $\St(\Sigma_{g,n-1})$
via the surjection $\PMod_{g,n} \rightarrow \PMod_{g,n-1}$ and on $D$
via the conjugation action of $\PMod_{g,n}$ on its normal subgroup
$\pi_1(\Sigma_{g,n-1})$.  It follows that to prove Corollary 
\ref{maincorollary:identifysteinberg}, it is enough to prove the following key lemma.

\begin{lemma}
\label{lemma:dualizingfree}
Fix some $g \geq 0$ and $n \geq 2$ such that 
$\Sigma_{g,n} \notin \{\Sigma_{0,2},\Sigma_{0,3}\}$.  There is then 
a $\PMod_{g,n}$-equivariant isomorphism between the dualizing module
of $\pi_1(\Sigma_{g,n-1})$ and $\tQ[\NewC_{g,n}]$.
\end{lemma}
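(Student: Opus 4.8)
The plan is to identify the dualizing module of $\pi_1(\Sigma_{g,n-1})$ directly using the standard presentation of this free group and its associated $2$-complex (or, equivalently, the wedge-of-circles classifying space), and then to match the resulting module with $\tQ[\NewC_{g,n}]$ by interpreting the free generators as the loops that ``push'' the $n^{\text{th}}$ puncture around the other punctures and handles. Recall that for a free group $F$ of rank $r \geq 1$, the dualizing module is $\HH^1(F;\Z F) \cong \Z F^{\oplus(r-1)}$ as an abelian group, but crucially its module structure (both over $F$ itself and, in our setting, over the overgroup $\PMod_{g,n}$ acting by conjugation on the normal subgroup $\PP_{g,n}$) is \emph{not} a sum of copies of the group ring: it is the ``relative'' or ``co-augmentation'' type module. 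Concretely, if $\widetilde{Y}$ is a tree on which $F$ acts freely (the universal cover of a wedge of circles), then $\HH^1(F;\Z F) = \RH_0$ of the end space, equivalently the cokernel of $\Z F \to \Z F^{(0)\text{-cells}}$... I would instead phrase this more geometrically: $\pi_1(\Sigma_{g,n-1})$ is the fundamental group of the surface $\Sigma_{g,n-1}$, and its dualizing module is naturally $\RH_1^{\text{lf}}$ (locally finite / Borel--Moore homology) of the universal cover, which for a surface with punctures (hence homotopy equivalent to a graph) is computed from the ends.

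First I would set up the right model. The surface $\Sigma_{g,n-1}$ deformation retracts onto a wedge of circles, so $\pi_1(\Sigma_{g,n-1})$ is free; but the \emph{useful} model here is $\Sigma_{g,n-1}$ itself, where the point-pushing subgroup $\PP_{g,n} \cong \pi_1(\Sigma_{g,n-1})$ acts by deck transformations on the universal cover $\widehat{\Sigma}$. Since a surface with at least one puncture is aspherical with contractible universal cover (a plane), and the $\PMod_{g,n}$-action on $\PP_{g,n}$ by conjugation is realized by the action on the marked surface, I get a $\PMod_{g,n}$-equivariant identification of the dualizing module $D$ of $\PP_{g,n}$ with $\HH^1_c(\widehat{\Sigma};\Q)$, or dually with the reduced $0$-th homology of the space of ends of $\widehat{\Sigma}$ suitably twisted. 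The key point: the ends of $\widehat{\Sigma}$ that matter come in two types — those ``at infinity'' of the surface itself and those sitting over the $n-1$ punctures of $\Sigma_{g,n-1}$. After deleting the genus/compact-at-infinity contribution (which, via the long exact sequence relating $\Sigma_{g,n-1}$ to the closed surface $\Sigma_g$, drops out because the relevant homology of the closed surface vanishes in the right degree), what survives is exactly a reduced permutation module on the set of punctures-of-$\widehat{\Sigma}$-lying-over-punctures-of-$\Sigma_{g,n-1}$. This is precisely how $\NewC_{g,n}$ arises: a curve in $\NewC_{g,n}$ is a loop (bounding a twice-punctured disc with the $n^{\text{th}}$ puncture) and such loops are in natural $\PMod_{g,n}$-equivariant bijection with these relevant ends.

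The cleanest execution, and the one I expect the authors to use, is to run this through a Mayer--Vietoris or long-exact-sequence argument mirroring exactly the punctured-surface exact sequence \eqref{eqn:punctureseq} from \S\ref{section:highbirman}: write down the short exact sequence of $\PMod_{g,n}$-modules $0 \to (\text{dualizing module of }\pi_1(\Sigma_g)) \to D \to (\text{reduced permutation module on ends over punctures}) \to 0$, observe that $\pi_1(\Sigma_g)$ is a \emph{Poincaré} duality group of dimension $2$ (not $1$), so its ``dualizing module in degree $1$'' is zero and the first term vanishes, yielding $D \cong \tQ[\text{ends over punctures}]$. Then identify the ends over the $(n-1)$ punctures of $\Sigma_{g,n-1}$ with $\NewC_{g,n}$: a curve $\gamma \in \NewC_{g,n}$ bounds a twice-punctured disc containing the $n^{\text{th}}$ puncture and exactly one of the other $n-1$ punctures, so the $n-1$ choices of "which other puncture" together with the orbit of point-pushing recover exactly the punctures of $\widehat{\Sigma}$ sitting over those $n-1$ points, compatibly with the $\PMod_{g,n}$-action. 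Matching the augmentation maps shows the reduced permutation modules agree, giving the desired isomorphism $D \cong \tQ[\NewC_{g,n}]$.

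The main obstacle, and where I would spend the most care, is the precise identification of the $\PMod_{g,n}$-action — not merely the abelian-group isomorphism. The dualizing module of a free group is sensitive to the module structure, and one must check that conjugation in $\PMod_{g,n}$ moves the "ends over punctures" basis exactly as $\PMod_{g,n}$ permutes (and acts trivially on the orbit-labels of) the curves in $\NewC_{g,n}$. Concretely this requires: (i) choosing the basepoint and the identification $\PP_{g,n} \cong \pi_1(\Sigma_{g,n-1})$ coherently with a chosen lift, (ii) verifying that a mapping class $\phi \in \PMod_{g,n}$ fixing the $n^{\text{th}}$ puncture sends the end corresponding to the $j^{\text{th}}$ other puncture to the end corresponding to $\phi(j)$ — but since $\PMod_{g,n}$ is the \emph{pure} mapping class group it fixes each puncture, so the action on $\NewC_{g,n}$ is the nontrivial one coming only from how $\phi$ drags the bounding curves around, and one must see this matches the twisting of the coefficient system on the ends. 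I would handle this by working with a fixed arc $\alpha$ from the $n^{\text{th}}$ puncture to the $j^{\text{th}}$ puncture (as in the proof of Theorem \ref{maintheorem:curvecpx}) and tracking how $\alpha$, hence the regular-neighborhood boundary curve $\gamma \in \NewC_{g,n}$, transforms; the compatibility then follows from functoriality of the Bieri--Eckmann identification $D \cong \HH^1_c(\widehat{\Sigma};\Q)$ under the $\PMod_{g,n}$-equivariant structure. Everything else — the vanishing of the $\pi_1(\Sigma_g)$-term, the computation of the number of ends, the reduced-permutation-module bookkeeping — is routine once this equivariance is pinned down.
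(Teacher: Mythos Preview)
Your overall strategy matches the paper's: identify the dualizing module of $\pi_1(\Sigma_{g,n-1})$ with $\HH^1_c$ of the universal cover, reduce this to $\RH_0$ of a boundary/end set, and then match that set with $\NewC_{g,n}$, with separate care for $\PMod_{g,n}$-equivariance. The paper does exactly this via a hyperbolic model: truncate cusp neighborhoods to get a compact $X$ with contractible universal cover $\tX$, then $\HH^1_c(\tX;\Q) \cong \HH_1(\tX,\partial\tX;\Q) \cong \RH_0(\partial\tX;\Q)$ by Poincar\'e--Lefschetz duality and contractibility of $\tX$. The components of $\partial\tX$ are horocycles, in bijection with maximal parabolic subgroups, hence with based unoriented simple loops around punctures, hence with $\NewC_{g,n}$. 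Your sketch of the equivariance check is on target and corresponds to the paper's Step~2.

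There is, however, a genuine muddle in your middle step. The short exact sequence you propose,
\[
0 \longrightarrow (\text{dualizing module of }\pi_1(\Sigma_g)) \longrightarrow D \longrightarrow \tQ[\text{ends over punctures}] \longrightarrow 0,
\]
does not exist as stated. The surjection $\pi_1(\Sigma_{g,n-1}) \to \pi_1(\Sigma_g)$ does not induce a map between their respective $\HH^1(\,\cdot\,;\Q[\,\cdot\,])$ groups in any direct way, and there is no ``closed-surface'' or ``genus'' term to subtract off. Relatedly, your talk of ``two types of ends'' is misleading: once one truncates the cusps in $\bbH^2$, \emph{all} boundary components are horocycles lying over cusps --- there is no separate contribution from the ideal boundary of $\bbH^2$. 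The analogy with the exact sequence \eqref{eqn:punctureseq} from \S\ref{section:highbirman} breaks down because that sequence concerns a \emph{finite} cover (whose compactification is a closed surface of positive genus contributing nontrivial $\HH^1$), whereas here the universal cover is contractible, so $\RH_0(\partial\tX)$ is already the whole of $\HH^1_c(\tX)$ with nothing left to quotient out. Once you drop the spurious exact sequence and compute directly, you arrive at the paper's argument.
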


\p{Dualizing modules}
Before we prove Lemma \ref{lemma:dualizingfree}, 
we need to discuss some general facts about duality groups for
which \cite[Chapter 3]{BieriBook} and \cite[Chapter VIII]{BrownCohomology} form
appropriate textbook references.  Fix a group $G$.
The group $G$ is a $\Q$-duality group of dimension $n$ if and only if $\HH^k(G;\Q[G]) = 0$ for all $k \neq n$.  The dualizing
module for $G$ is then $\HH^n(G;\Q[G])$, on which $G$ acts via its right action on $\Q[G]$.

\p{Topological interpretation}
We now give a topological description of $\HH^n(G;\Q[G])$.  Assume that $(X,x_0)$ is a 
based compact simplicial complex that forms an Eilenberg--MacLane space for $G$, so
$\pi_1(X,x_0) = G$ and the based universal cover $(\tX,\tx_0)$ is contractible.
The $\pi_1(X,x_0)$-module $\Q[G]$ is a local system on $X$, and
$\HH^n(G;\Q[G]) \cong \HH^n(X;\Q[G])$.  Since $X$ is a compact simplicial complex,
there is a natural isomorphism between $\HH^n(X;\Q[G])$ and the compactly supported
cohomology $\HH^n_c(\tX;\Q)$ of $\tX$.

\p{Action of automorphisms}
The cohomology of a group with twisted coefficients forms a bifunctor which
is contravariant in the group and covariant in the coefficient system.  More precisely,
if for $i=1,2$ we have groups $G_i$ equipped with coefficient systems $M_i$, then
given a group homomorphism $f\colon G_2 \rightarrow G_1$ and a morphism
$f'\colon M_1 \rightarrow M_2$ such that
\[f'(f(\gamma) \cdot m) = \gamma \cdot f'(m) \quad \quad (\gamma \in G_2, m \in M_1),\]
we get induced maps 
\[(f,f')_{\ast}\colon \HH^n(G_1;M_1) \rightarrow \HH^n(G_2;M_2)\]
for all $n$.  In particular, the group $\Aut(G)$ acts on $\HH^n(G;\Q[G])$ via
the formula
\[\phi_{\ast}(x) = (\phi^{-1},\phi)_{\ast}(x) \quad \quad (\phi \in \Aut(G), x \in \HH^n(G;\Q[G]).\]

\p{Topological interpretation of action}
Again let $(X,x_0)$ be a based compact simplicial complex that forms an Eilenberg--MacLane
space for $G$ and let $(\tX,\tx_0)$ be the based universal cover of $(X,x_0)$.  The
group $\Aut(G)$ is isomorphic to the group of homotopy classes of basepoint-preserving
self-homotopy-equivalences of $(X,x_0)$.  Consider $\phi \in \Aut(G)$, and
let $\psi\colon (X,x_0) \rightarrow (X,x_0)$ be a homotopy equivalence realizing
$\phi^{-1}$.  We can then lift $\psi$ to a proper map 
$\tpsi\colon (\tX,\tx_0) \rightarrow (\tX,\tx_0)$.  The map
\[\phi_{\ast} = (\phi^{-1},\phi)_{\ast}\colon \HH^n(G;\Q[G]) \rightarrow \HH^n(G;\Q[G])\] 
described above
can then be identified with 
$\tpsi^{\ast} \colon \HH^n_c(\tX;\Q) \rightarrow \HH^n_c(\tX;\Q)$ in the sense
that the diagram
\[\begin{CD}
\HH^n_c(\tX;\Q) @>{\tpsi^{\ast}}>> \HH^n_c(\tX;\Q) \\
@V{\cong}VV                        @V{\cong}VV \\
\HH^n(G;\Q[G])  @>{(\phi^{-1},\phi)_{\ast}}>>  \HH^n(G;\Q[G])
\end{CD}\]
commutes.

\p{The proof}
We can now prove Lemma \ref{lemma:dualizingfree}.

\begin{proof}[Proof of Lemma \ref{lemma:dualizingfree}]
We first recall the statement, using the notation we introduced above.  
Fix some $g \geq 0$ and $n \geq 2$ such that 
$\Sigma_{g,n} \notin \{\Sigma_{0,2},\Sigma_{0,3}\}$.  We must prove that there is an isomorphism
\[\HH^1(\pi_1(\Sigma_{g,n-1});\Q[\pi_1(\Sigma_{g,n-1})]) \cong \tQ[\NewC_{g,n}]\]
of $\PMod_{g,n}$-modules.  We will do this in two steps:

\begin{steps}
We construct an isomorphism
\[\eta\colon \HH^1(\pi_1(\Sigma_{g,n-1});\Q[\pi_1(\Sigma_{g,n-1})]) \stackrel{\cong}{\longrightarrow} \tQ[\NewC_{g,n}]\]
of $\pi_1(\Sigma_{g,n-1})$-modules.
\end{steps}

Let $p_n \in \Sigma_{g,n-1}$ be the basepoint for $\pi_1$.  The group $\PMod_{g,n}$
acts on $\pi_1(\Sigma_{g,n-1},p_n)$ by identifying $p_n$ with the $n^{\text{th}}$
puncture of $\Sigma_{g,n}$.  Our assumptions on $g$ and $n$ imply that
$\Sigma_{g,n-1}$ can be endowed with a hyperbolic metric of finite volume.  
Fixing such a metric identifies the based universal cover of
$(\Sigma_{g,n-1},p_n)$ with $\rho\colon(\bbH^2,\tp_n) \rightarrow (\Sigma_{g,n-1},p_n)$
for some basepoint $\tp_n \in \bbH^2$.

Let $C_1,\ldots,C_{n-1} \subset \Sigma_{g,n-1}$ be open neighborhoods of the cusps
of $\Sigma_{g,n-1}$ satisfying the following properties:
\begin{compactitem}
\item For all $1 \leq i \leq n-1$, the preimage $\tC_i \subset \bbH^2$ of $C_i$
is a disjoint union of open horoballs.
\item For distinct $1 \leq i,j \leq n-1$, the closures $\oC_i$ and $\oC_j$ of
$C_i$ and $C_j$ in $\Sigma_{g,n-1}$ are disjoint.
\item For all $1 \leq i \leq n-1$, we have $p_n \notin \oC_i$.
\end{compactitem}
Define
\[X = \Sigma_{g,n-1} \setminus \bigcup_{i=1}^{n-1} C_i \quad \text{and} \quad
\tX = \bbH^2 \setminus \bigcup_{i=1}^{n-1} \tC_i,\]
so $X$ is a compact aspherical $2$-dimensional manifold with boundary such that
$\pi_1(X,p_n) = \pi_1(\Sigma_{g,n-1},p_n)$ and $\rho\colon (\tX,\tp_n) \rightarrow (X,p_n)$
is its based universal cover.

As was discussed before the proof, since $X$ is a compact Eilenberg--MacLane
space for $\pi_1(\Sigma_{g,n-1},p_n)$ we have an isomorphism
\begin{equation}
\label{eqn:firstiso}
\HH^1(\pi_1(\Sigma_{g,n-1},p_n);\Q[\pi_1(\Sigma_{g,n-1},p_n)]) \cong \HH^1_c(\tX;\Q)
\end{equation}
of $\pi_1(\Sigma_{g,n-1},p_n)$-modules.  By Poincar\'{e}-Lefschetz duality 
and the long exact sequence of the pair $(\tX,\partial \tX)$, we have
\begin{equation}
\label{eqn:secondiso}
\HH^1_c(\tX;\Q) \cong \HH_1(\tX,\partial \tX;\Q) \cong \RH_0(\partial \tX;\Q).
\end{equation}
Let $B$ be the set of components of $\partial \tX$, so
\begin{equation}
\label{eqn:thirdiso}
\RH_0(\partial \tX;\Q) \cong \tQ[B].
\end{equation}
The elements of $B$ are in bijection with the connected
components of $\tC_1 \cup \cdots \cup \tC_{n-1}$, which themselves are
in bijection with the maximal parabolic subgroups of
$\pi_1(\Sigma_{g,n-1},p_n) \subset \Isom(\bbH^2)$.  
These are precisely the subgroups generated by elements of
$\pi_1(\Sigma_{g,n-1},p_n)$ that are freely homotopic to loops 
surrounding one of the punctures.  

These loops are simple, and
each maximal parabolic subgroup
is generated by two such loops, one with the cusp on its left and the
other with the cusp on its right.  It follows that $B$ is in bijection
with homotopy classes of $p_n$-based {\em unoriented} loops on $\Sigma_{g,n-1}$,
and as the following picture shows, these are in bijection with elements
of $\NewC_{g,n}$ (which recall is the set of homotopy classes of simple closed
curves on $\Sigma_{g,n}$ bounding twice-punctured discs one of whose punctures
is the $n^{\text{th}}$ one, which we identify with $p_n$):\\
\centerline{\psfig{file=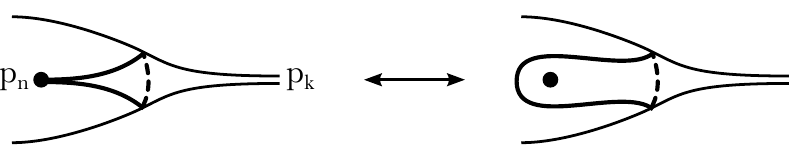,scale=100}}
In this figure, the $k^{\text{th}}$ puncture is suggestively labeled $p_k$, and the loop
on the right bounds a disc containing the $k^{\text{th}}$ and $n^{\text{th}}$ punctures
(as in the definition of $\NewC_{g,n}$).
We conclude from this discussion that we have an isomorphism
\begin{equation}
\label{eqn:fourthiso}
\tQ[B] \cong \tQ[\NewC_{g,n}]
\end{equation}
of $\pi_1(\Sigma_{g,n-1},p_n)$-modules.

Combining \eqref{eqn:firstiso}, \eqref{eqn:secondiso}, \eqref{eqn:thirdiso}, and
\eqref{eqn:fourthiso}, we obtain an isomorphism
\[\eta\colon \HH^1(\pi_1(\Sigma_{g,n-1});\Q[\pi_1(\Sigma_{g,n-1})]) \stackrel{\cong}{\longrightarrow} \tQ[\NewC_{g,n}]\]
of $\pi_1(\Sigma_{g,n-1},p_n)$-modules.  

\begin{steps}
We prove that the map $\eta$ is an isomorphism of $\PMod_{g,n}$-modules.
\end{steps}

We will continue using the notation defined in the previous step.
Consider some $\phi \in \PMod_{g,n}$.  Our goal is to prove that $\eta$ identifies
the action of $\phi$ on $\HH^1(\pi_1(\Sigma_{g,n-1});\Q[\pi_1(\Sigma_{g,n-1})])$
induced by the conjugation action of $\PMod_{g,n}$ on its normal subgroup
$\pi_1(\Sigma_{g,n-1})$ with the evident action of $\phi$ on $\tQ[\NewC_{g,n}]$.

As was discussed before the proof, let 
$\psi\colon (X,p_n) \rightarrow (X,p_n)$ be an orientation-preserving
diffeomorphism representing the restriction of the inverse of $\phi$ to
$X \subset \Sigma_{g,n-1}$.  Let $\tpsi\colon (\tX,\tp_n) \rightarrow (\tX,\tp_n)$
be the lift of $\tpsi$.  We then have a commutative diagram
\[\begin{CD}
\HH_1(\tX,\partial \tX;\Q) @<{\tpsi_{\ast}}<< \HH_1(\tX,\partial \tX;\Q) \\
@V{\cong}VV                        @V{\cong}VV \\
\HH^n_c(\tX;\Q) @>{\tpsi^{\ast}}>> \HH^n_c(\tX;\Q) \\
@V{\cong}VV                        @V{\cong}VV \\
\HH^n(\pi_1(\Sigma_{g,n-1});\Q[\pi_1(\Sigma_{g,n-1})])  @>{(\phi^{-1},\phi)_{\ast}}>>  \HH^n(\pi_1(\Sigma_{g,n-1});\Q[\pi_1(\Sigma_{g,n-1})]).
\end{CD}\]
In other words, under our isomorphisms the action of $\tpsi_{\ast}^{-1}$ on
$\HH_1(\tX,\partial \tX;\Q)$.
is identified with the action of $\phi$ on
$\HH^n(\pi_1(\Sigma_{g,n-1});\Q[\pi_1(\Sigma_{g,n-1})])$.  
Under the isomorphism
\[\HH_1(\tX,\partial \tX;\Q) \cong \RH_0(\partial \tX) \cong \tQ[\NewC_{g,n}],\]
the action of $\tpsi_{\ast}^{-1}$ on $\HH_1(\tX,\partial \tX;\Q)$ is
identified with the action of $\phi$ on $\tQ[\NewC_{g,n}]$ (here we are using
the fact that $\psi\colon (X,p_n) \rightarrow (X,p_n)$ represents $\phi^{-1}$).
The lemma follows.
\end{proof}

\section{Proof II: via the Steinberg module}
\label{section:proofii}

This section uses Corollary \ref{maincorollary:identifysteinberg} to give our
second proof of Theorem \ref{maintheorem:highdimprime}.  There are two sections:
the actual proof is in \S \ref{section:steinproof}, and \S \ref{section:coinvariants}
comments on a possible way to improve our bounds.

\subsection{High-dimensional cohomology and the Steinberg module}
\label{section:steinproof}

The heart of our proof of Theorem \ref{maintheorem:highdimprime} via
Corollary \ref{maincorollary:identifysteinberg} is the following lemma.  For $1 \leq k \leq n-1$, let $\NewC_{g,n}^k$ be
the set of isotopy classes of simple closed curves on $\Sigma_{g,n}$ that bound a twice-punctured
disc whose punctures are the $k^{\text{th}}$ and $n^{\text{th}}$ ones, so $\NewC_{g,n} = \sqcup_{k=1}^{n-1} \NewC_{g,n}^k$.

\begin{lemma}
\label{lemma:orbits}
Fix some $g \geq 1$ and $n \geq 2$.  Then for all $1 \leq k \leq n-1$ and $\ell \geq 2$, the action
of $\PMod_{g,n}[\ell]$ on $\NewC_{g,n}^k$ has $\ell^{2g}$ orbits.
\end{lemma}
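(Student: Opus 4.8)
The plan is to build a single invariant of curves in $\NewC^k_{g,n}$ that is constant on $\PMod_{g,n}[\ell]$-orbits, separates them, and takes exactly $\ell^{2g}$ values. Let $P$ be the set of $n$ punctures (identified with points on $\Sigma_g$ as in Lemma~\ref{lemma:identifyh1}). For $\gamma\in\NewC^k_{g,n}$ let $\alpha_\gamma$ be the (isotopically unique) embedded arc in the twice-punctured disc bounded by $\gamma$ joining the $k^{\text{th}}$ puncture to the $n^{\text{th}}$, oriented from the former to the latter, and define
\[ A\colon \NewC^k_{g,n} \longrightarrow \HH_1(\Sigma_g,P;\Z/\ell), \qquad A(\gamma)=[\alpha_\gamma]. \]
First I would check the formal properties of $A$: it is well-defined (the arc depends only on the isotopy class of $\gamma$) and $\PMod_{g,n}$-equivariant; one has $\partial[\alpha_\gamma]=[p_n]-[p_k]$ for every $\gamma$, so the image of $A$ lies in a single coset of $\HH_1(\Sigma_g;\Z/\ell)\subset\HH_1(\Sigma_g,P;\Z/\ell)$, a set of size $\ell^{2g}$; and, crucially, $A$ is constant on $\PMod_{g,n}[\ell]$-orbits, since by Lemma~\ref{lemma:identifyh1} the group $\PMod_{g,n}[\ell]$ acts trivially on $\HH_1(\Sigma_g,P;\Z/\ell)$.

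\textbf{Reduction via point-pushing.} Next I would bring in the Birman exact sequence of Theorem~\ref{theorem:birmanlevel}. The key geometric point is that $\NewC^k_{g,n}$ is a \emph{single} orbit under the point-pushing subgroup $\PP_{g,n}$: fixing $\gamma_0\in\NewC^k_{g,n}$, after filling the $n^{\text{th}}$ puncture $\gamma_0$ becomes the (isotopically unique) simple closed curve surrounding the $k^{\text{th}}$ puncture of $\Sigma_{g,n-1}$, hence is fixed by every element of $\PMod_{g,n-1}$; a short configuration argument then shows that generators of $\PMod_{g,n-1}$ lift to mapping classes of $\Sigma_{g,n}$ fixing $\gamma_0$, and these together with $\PP_{g,n}$ generate $\PMod_{g,n}$, which is transitive on $\NewC^k_{g,n}$ by the change-of-coordinates principle. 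Thus every $\gamma\in\NewC^k_{g,n}$ has the form $\gamma=\mathrm{Push}(x)(\gamma_0)$ for some $x\in\pi_1(\Sigma_{g,n-1})$, and dragging the endpoint $p_n$ of $\alpha_{\gamma_0}$ around $x$ changes the relative homology class by the image of $x$ in $\HH_1(\Sigma_g;\Z/\ell)$ up to sign, i.e. $A(\mathrm{Push}(x)(\gamma_0))=A(\gamma_0)\pm[x]$.

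\textbf{Closing the count.} Since $\pi_1(\Sigma_{g,n-1})\to\HH_1(\Sigma_g;\Z/\ell)$ is surjective, $A$ surjects onto the coset $A(\gamma_0)+\HH_1(\Sigma_g;\Z/\ell)$, so the image of $A$ has exactly $\ell^{2g}$ elements. Conversely, if $A(\mathrm{Push}(x)\gamma_0)=A(\mathrm{Push}(x')\gamma_0)$ then $x(x')^{-1}$ lies in $\ker(\pi_1(\Sigma_{g,n-1})\to\HH_1(\Sigma_g;\Z/\ell))$, which by Theorem~\ref{theorem:birmanlevel} is precisely $\PP_{g,n}[\ell]\subseteq\PMod_{g,n}[\ell]$; using normality of $\PMod_{g,n}[\ell]$ in $\PMod_{g,n}$ one concludes that $\mathrm{Push}(x)\gamma_0$ and $\mathrm{Push}(x')\gamma_0$ lie in the same $\PMod_{g,n}[\ell]$-orbit. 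Hence $A$ descends to a bijection between the set of $\PMod_{g,n}[\ell]$-orbits on $\NewC^k_{g,n}$ and the image of $A$, giving exactly $\ell^{2g}$ orbits.

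\textbf{Main obstacle.} I expect the two geometric inputs to require the most care: verifying that $\NewC^k_{g,n}$ is a single $\PP_{g,n}$-orbit (which needs the uniqueness up to isotopy of the simple closed curve surrounding one puncture together with the mild generation statement extracted from the Birman sequence), and pinning down the exact effect of a point-push on the relative homology class of $\alpha_{\gamma_0}$ with the correct orientation conventions. Both are standard, but everything else in the argument is essentially bookkeeping on top of them.
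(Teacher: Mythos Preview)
Your proposal is correct and follows essentially the same route as the paper's proof: define the relative-homology invariant $A(\gamma)=[\alpha_\gamma]\in\HH_1(\Sigma_g,P;\Z/\ell)$, observe that its image is the coset $\{x:\partial x=[p_n]-[p_k]\}$ of size $\ell^{2g}$, use $\PP_{g,n}$-transitivity on $\NewC_{g,n}^k$ together with the formula $A(\mathrm{Push}(x)\gamma_0)=A(\gamma_0)+[x]$ for surjectivity, and use Theorem~\ref{theorem:birmanlevel}'s identification of $\PP_{g,n}[\ell]$ for injectivity. Two minor cleanups: the appeal to normality of $\PMod_{g,n}[\ell]$ at the end is unnecessary, since $\mathrm{Push}(x(x')^{-1})\in\PP_{g,n}[\ell]\subset\PMod_{g,n}[\ell]$ already carries $\mathrm{Push}(x')\gamma_0$ to $\mathrm{Push}(x)\gamma_0$; and your $\pm$ in the point-push formula can be resolved to a definite sign once you fix conventions (the paper simply writes $\gamma\cdot x=x+[\gamma]$).
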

\begin{proof}
Just like in the proof of Theorem \ref{maintheorem:curvecpx} in \S \ref{section:identifycurve}, there
is a bijection between $\NewC_{g,n}^k$ and the set of isotopy classes of embedded arcs in $\Sigma_{g,n}$ connecting
the $k^{\text{th}}$ puncture to the $n^{\text{th}}$ puncture.  This bijection takes such an arc $\alpha$ to the
simple closed curve $\gamma$ forming the boundary of a regular neighborhood of $\alpha$.  We can therefore
regard $\NewC_{g,n}^k$ as being the set of such arcs.

Let $P = \{p_1,\ldots,p_n\}$ be a set of $n$ distinct points on $\Sigma_{g}$, which we identify with the punctures of
$\Sigma_{g,n}$.  By Lemma \ref{lemma:identifyh1}, the group $\PMod_{g,n}[\ell]$ is the kernel of the action
of $\PMod_{g,n}$ on $\HH_1(\Sigma_g,P;\Z/\ell)$.  Define a set map
\[\phi\colon \NewC_{g,n}^k \rightarrow \HH_1(\Sigma_g,P;\Z/\ell)\]
by letting $\phi(\alpha)$ be the homology class of the arc $\alpha \in \NewC_{g,n}^k$.  We have a short exact sequence
\[0 \longrightarrow \HH_1(\Sigma_g;\Z/\ell) \longrightarrow \HH_1(\Sigma_g,P;\Z/\ell) \stackrel{\partial}{\longrightarrow} \RH_0(P; \Z/\ell) \longrightarrow 0,\]
and the image of $\phi$ lies in the set
\[K = \Set{$x \in \HH_1(\Sigma_g,P;\Z/\ell)$}{$\partial(x) = p_n-p_k$}.\]
The set $K$ has $\ell^{2g}$ elements; indeed, $\HH_1(\Sigma_g;\Z/\ell)$ acts freely and transitively on it.  The map
$\phi$ is $\PMod_{g,n}[\ell]$-invariant, so it induces a map
\[\Phi\colon \NewC_{g,n}^k / \PMod_{g,n}[\ell] \rightarrow K.\]
To prove the lemma, it is enough to prove that $\Phi$ is a bijection.

Let $\PP_{g,n} \cong \pi_1(\Sigma_{g,n})$ be the point-pushing subgroup of $\PMod_{g,n}$ (c.f.\ \S \ref{section:birman}). 
The action of $\PP_{g,n}$ on $\HH_1(\Sigma_g,P;\Z/\ell)$ preserves $K$, and as we observed
in the proof of Theorem \ref{theorem:birmanlevel} its action on $K$ is given by the following formula:
\begin{equation}
\label{eqn:actionformula}
\gamma \cdot x = x + [\gamma] \quad \quad (\gamma \in \PP_{g,n}, x \in K).
\end{equation}
Here $[\gamma] \in \HH_1(\Sigma_g;\Z/\ell) \subset \HH_1(\Sigma_g,P;\Z/\ell)$ is the mod-$\ell$ homology
class of $\gamma$.

Fix $\alpha_0 \in \NewC_{g,n}^k$, and set $\kappa_0 = \phi(\alpha_0)$.  
The equation \eqref{eqn:actionformula} implies that $\PP_{g,n}$ acts transitively on
$K$, so
\[K = \Set{$\gamma \cdot \kappa_0$}{$\gamma \in \PP_{g,n}$} = \Set{$\phi(\gamma \cdot \alpha_0)$}{$\gamma \in \PP_{g,n}$}.\]
We deduce that $\phi$ is surjective and hence that $\Phi$ is surjective.

To see that $\Phi$ is injective, consider arbitrary elements $\alpha_1,\alpha_2 \in \NewC_{g,n}^k$ such that
$\phi(\alpha_1) = \phi(\alpha_2)$.  We must prove that there exists some $f \in \PMod_{g,n}[\ell]$ such that
$f \cdot \alpha_2 = \alpha_1$.  The point-pushing subgroup $\PP_{g,n}$ acts transitively on
$\NewC_{g,n}^k$, so we can write $\alpha_1 = \gamma_1 \cdot \alpha_0$ and $\alpha_2 = \gamma_2 \cdot \alpha_0$
for some $\gamma_1,\gamma_2 \in \PP_{g,n}$.  By \eqref{eqn:actionformula}, we have
\[\phi(\alpha_1) = \kappa_0 + [\gamma_1] \quad \text{and} \quad \phi(\alpha_2) = \kappa_0 + [\gamma_2],\]
so $[\gamma_1] = [\gamma_2]$.  This implies that $[\gamma_1 \gamma_2^{-1}] = 0$, so by
Theorem \ref{theorem:birmanlevel} we have 
\[f:=\gamma_1 \gamma_2^{-1} \in \PP_{g,n}[\ell] = \PP_{g,n} \cap \PMod_{g,n}[\ell].\]
This $f$ satisfies $f \cdot \alpha_2 = \alpha_1$, as desired.
\end{proof}

\begin{proof}[Proof of Theorem \ref{maintheorem:highdimprime}]
We start by recalling the statement.  Fix some $g \geq 1$ and $n \geq 2$.
Let $\nu$ be the virtual cohomological dimension of $\PMod_{g,n}$.  Finally,
fix some $\ell \geq 2$.  We must prove that the 
dimension of $\HH^{\nu}(\PMod_{g,n}[\ell];\Q)$ is at least
\begin{equation}
\label{eqn:neededsurjection}
\left(\left(n-1\right) \ell^{2g}-1\right) \cdot \dim_{\Q} \HH^{\nu-1}(\PMod_{g,n-1}[\ell];\Q).
\end{equation}
Applying Bieri--Eckmann duality, we see that
\[\HH^{\nu}(\PMod_{g,n}[\ell];\Q) \cong \HH_0(\PMod_{g,n}[\ell];\St(\Sigma_{g,n})) = (\St(\Sigma_{g,n}))_{\PMod_{g,n}[\ell]},\]
where the subscript indicates that we are taking coinvariants.  So what we must
do is produce a $\PMod_{g,n}[\ell]$-invariant epimorphism from $\St(\Sigma_{g,n})$ to
a vector space whose dimension is the quantity \eqref{eqn:neededsurjection}.

Corollary \ref{maincorollary:identifysteinberg} says that
\[\St(\Sigma_{g,n}) \cong \tQ[\NewC_{g,n}] \otimes \St(\Sigma_{g,n-1}).\]
Here the action of $\PMod_{g,n}$ on $\tQ[\NewC(\Sigma_{g,n})]$ is induced by the permutation
action of $\PMod_{g,n}$ on $\NewC_{g,n}$ and the action of $\PMod_{g,n}$ on
$\St(\Sigma_{g,n-1})$ is the one that factors through the projection
$\PMod_{g,n} \rightarrow \PMod_{g,n-1}$ that deletes the $n^{\text{th}}$ puncture.

By Harer's computation \eqref{eqn:vcdformula}
of the virtual cohomological dimension of the mapping class group, the virtual
cohomological dimension of $\PMod_{g,n-1}$ is $\nu-1$.  Let
\[\pi\colon \St(\Sigma_{g,n-1}) \rightarrow (\St(\Sigma_{g,n-1}))_{\PMod_{g,n-1}[\ell]} \cong \HH^{\nu-1}(\PMod_{g,n-1}[\ell];\Q)\]
be the evident projection.  

We have $\NewC_{g,n} = \sqcup_{k=1}^{n-1} \NewC_{g,n}^k$.  Lemma \ref{lemma:orbits} says
that $\NewC_{g,n}^k / \PMod_{g,n}[\ell]$ has $\ell^{2g}$ elements, so $\NewC_{g,n}/\PMod_{g,n}[\ell]$
has $(n-1)\ell^{2g}$ elements.  It follows that $\tQ[\NewC_{g,n}/\PMod_{g,n}[\ell]]$ is
$((n-1)\ell^{2g}-1)$-dimensional.  Let
\[\rho\colon \tQ[\NewC_{g,n}] \rightarrow \tQ[\NewC_{g,n}/\PMod_{g,n}[\ell]]\]
be the projection.

Combining $\rho$ and $\pi$, we obtain a map
\[\rho \otimes \pi\colon \St(\Sigma_{g,n}) \cong \tQ[\NewC_{g,n}] \otimes \St(\Sigma_{g,n-1}) \rightarrow \tQ[\NewC_{g,n}/\PMod_{g,n}[\ell]] \otimes \HH^{\nu-1}(\PMod_{g,n-1}[\ell];\Q).\]
This map is clearly $\PMod_{g,n}[\ell]$-invariant, and its image
has dimension equal to the quantity \eqref{eqn:neededsurjection}, as desired.
\end{proof}

\subsection{Some remarks on coinvariants}
\label{section:coinvariants}

In the proof of Theorem \ref{maintheorem:highdimprime} in \S \ref{section:steinproof}, we made
use of the map
\[\tQ[\NewC_{g,n}] \longrightarrow \tQ[\NewC_{g,n}/\PMod_{g,n}[\ell]].\]
The target of this map is a quotient of the coinvariants $\tQ[\NewC_{g,n}]_{\PMod_{g,n}[\ell]}$, and
one might think that a stronger result could be proven by using these coinvariants instead, which
a priori might be bigger.  However, the following lemma shows that they are not, at least for $g \geq 3$.
We do not know whether a better result for $g=2$ could be obtained using the coinvariants.

\begin{lemma}
\label{lemma:coinvariants}
Fix $g \geq 3$ and $n \geq 2$.  For all $\ell \geq 2$, we have
\[\tQ[\NewC_{g,n}]_{\PMod_{g,n}[\ell]} \cong \tQ[\NewC_{g,n}/\PMod_{g,n}[\ell]].\]
\end{lemma}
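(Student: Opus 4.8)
The plan is to show that the natural surjection $\tQ[\NewC_{g,n}]_{\PMod_{g,n}[\ell]} \twoheadrightarrow \tQ[\NewC_{g,n}/\PMod_{g,n}[\ell]]$ is an isomorphism by exhibiting enough relations. Write $G = \PMod_{g,n}$ and $G[\ell] = \PMod_{g,n}[\ell]$. The coinvariants $\Q[\NewC_{g,n}]_{G[\ell]}$ have a basis indexed by $\NewC_{g,n}/G[\ell]$, so the issue is purely about $\tQ$ versus $\Q$: one must check that in the coinvariants of $\tQ[\NewC_{g,n}]$, the classes of $[\alpha] - [\beta]$ for $\alpha, \beta$ in the \emph{same} $G[\ell]$-orbit already span everything that dies, i.e.\ that the augmentation-to-$\Q$ on each orbit sum behaves as in $\tQ[\NewC_{g,n}/G[\ell]]$. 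Equivalently, and more usefully, I would instead prove the dual statement: the invariants $\Q[\NewC_{g,n}]^{G[\ell]}$ (finite-dimensional, with basis the orbit-indicator functions) have the property that the trivial representation appears in $\tQ[\NewC_{g,n}]^{G[\ell]}$ with the ``expected'' codimension $1$ inside $\Q[\NewC_{g,n}]^{G[\ell]}$ — but this is automatic. So the real content must be elsewhere; the lemma is really asserting that $\tQ[\NewC_{g,n}]_{G[\ell]}$ is \emph{not} bigger than the obvious quotient, which can only fail if some $[\alpha]-[\beta]$ with $\alpha,\beta$ in distinct $G[\ell]$-orbits is nonetheless zero in the coinvariants. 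Thus what must be shown is the reverse: distinct orbits remain linearly independent in the coinvariants, which amounts to constructing a $G[\ell]$-equivariant map $\tQ[\NewC_{g,n}] \to \tQ[\NewC_{g,n}/G[\ell]]$ that is split, together with showing that every element of the augmentation ideal is, modulo the $G[\ell]$-action, a combination of differences of same-orbit generators. The latter is the identity $g\cdot x - x$ being a sum of such differences, which is immediate since $g\cdot[\alpha]$ and $[\alpha]$ lie in the same orbit by definition. Hence the only possible failure is the splitting, i.e.\ that $\tQ[\NewC_{g,n}/G[\ell]]$ injects into the coinvariants — and this is exactly what needs an argument.

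Concretely, then, here is the structure I would follow. Recall from Lemma~\ref{lemma:orbits} and its proof that $\NewC_{g,n}^k$ is identified $G[\ell]$-equivariantly with the set $K_k = \{x \in \HH_1(\Sigma_g, P;\Z/\ell) : \partial x = p_n - p_k\}$, on which $\HH_1(\Sigma_g;\Z/\ell) \cong (\Z/\ell)^{2g}$ acts simply transitively via the point-pushing subgroup, and on which $G[\ell]$ acts through this point-pushing action together with the (trivial, on $H_1$-classes) action of $G[\ell]$ on $\HH_1(\Sigma_g;\Z/\ell)$. So as a $G[\ell]$-set, $\NewC_{g,n}^k$ is a \emph{trivial} $(\Z/\ell)^{2g}$-torsor: the whole $G[\ell]$-action on $\NewC_{g,n}^k$ factors through the point-pushing quotient onto $(\Z/\ell)^{2g}$ acting by translation. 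The key point is that this translation action of $(\Z/\ell)^{2g}$ on $\Q[K_k]$ is the regular representation, whose coinvariants are $1$-dimensional; and crucially the \emph{full} group $G[\ell]$ surjects onto this $(\Z/\ell)^{2g}$ (this uses the Birman exact sequence for level subgroups, Theorem~\ref{theorem:birmanlevel}, together with surjectivity of $\pi_1(\Sigma_{g,n-1}) \to \HH_1(\Sigma_g;\Z/\ell)$). Therefore $\Q[\NewC_{g,n}^k]_{G[\ell]} = \Q[K_k]_{G[\ell]} = \Q[K_k]_{(\Z/\ell)^{2g}} \cong \Q$, i.e.\ each $\NewC_{g,n}^k$ contributes exactly one dimension to the coinvariants of $\Q[\NewC_{g,n}]$. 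Summing over $k = 1,\dots,n-1$ gives $\dim \Q[\NewC_{g,n}]_{G[\ell]} = n-1$, and a careful bookkeeping of the augmentation then gives $\dim \tQ[\NewC_{g,n}]_{G[\ell]} = n-1 = \dim \tQ[\NewC_{g,n}/G[\ell]]$, forcing the surjection to be an isomorphism.

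The remaining gap in the above — and where the hypothesis $g \geq 3$ enters — is the claim that $G[\ell]$ acts on $\NewC_{g,n}^k$ through \emph{only} the translation action of $(\Z/\ell)^{2g}$, with no further identifications coming from $G[\ell]$-elements that fix the $\HH_1$-class but still move arcs around. In other words, I need that the map $\Phi\colon \NewC_{g,n}^k/G[\ell] \to K_k$ of Lemma~\ref{lemma:orbits} is a bijection (already proved) \emph{and} that moreover the stabilizers behave so that no extra relations in the coinvariants appear — but in fact for the coinvariants computation one only needs the orbit set, which Lemma~\ref{lemma:orbits} already pins down to have size $\ell^{2g}$. So re-examining: the coinvariants $\Q[\NewC_{g,n}^k]_{G[\ell]}$ have dimension equal to the number of $G[\ell]$-orbits, which is $\ell^{2g}$, \emph{not} $1$. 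I have conflated two things. Let me re-read the statement: the lemma asserts $\tQ[\NewC_{g,n}]_{G[\ell]} \cong \tQ[\NewC_{g,n}/G[\ell]]$, and since $\Q[S]_{H} = \Q[S/H]$ for any $H$-set $S$ (coinvariants of a permutation module over a field), this reduces to $\tQ[S]_H \cong \tQ[S/H]$, which holds iff the coinvariants of the augmentation ideal have the expected dimension $|S/H| - 1$, i.e.\ iff $H$ acts so that $\tQ[S]$ has no trivial \emph{quotient} beyond the one coming from $S/H$. Since $\Q[S]_H = \Q[S/H]$ and $\tQ[S] \subset \Q[S]$ with $\Q[S]/\tQ[S] = \Q$ trivial, the long exact sequence of coinvariants (the tail of group homology $H_1(H;\Q) \to \tQ[S]_H \to \Q[S]_H \to \Q \to 0$) gives surjectivity automatically, and injectivity of $\tQ[S]_H \to \Q[S/H]$ is equivalent to the connecting map $H_1(H;\Q) \to \tQ[S]_H$ being zero, i.e.\ to the map $H_1(H;\Q) \to H_1(H; \Q[S]) = \bigoplus_{\text{orbits}} H_1(\mathrm{Stab};\Q)$ being injective. \emph{That} is the real statement, and it is where $g \geq 3$ is used: one needs $H_1(G[\ell];\Q) \to H_1(\mathrm{Stab}_{G[\ell]}(\alpha);\Q)$ to be injective for a point $\alpha$ in each orbit; equivalently that the stabilizer of an arc $\alpha$ — which is essentially $\PMod$ of the cut surface at level $\ell$ — surjects on rational $H_1$. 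I would deduce this from the fact (valid for $g \geq 3$) that $\PMod_{g}^b[\ell]$ and its relatives have the expected abelianizations and that the inclusion of a cut subsurface induces a surjection on rational abelianizations; the genus restriction $g \geq 3$ is exactly the regime where such surjectivity-on-$H_1$ statements for level subgroups are known and clean. The main obstacle, then, is establishing this $H_1$-surjectivity for the arc stabilizer, which I expect to carry out by realizing the stabilizer via a Birman-type exact sequence (cutting along $\alpha$ reduces to $\PMod_{g,n-1}[\ell]$ with a boundary component, whose point-pushing kernel is free) and chasing the five-term exact sequence, using $g \geq 3$ to kill the relevant $H_1$ and $H_2$ obstruction terms.
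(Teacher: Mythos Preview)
After considerable meandering you do land on the correct framework: the long exact sequence in group homology for $0 \to \tQ[S] \to \Q[S] \to \Q \to 0$ with $S = \NewC_{g,n}$ and $H = \PMod_{g,n}[\ell]$, whose relevant segment is
\[
\HH_1(H;\Q[S]) \longrightarrow \HH_1(H;\Q) \stackrel{\partial}{\longrightarrow} (\tQ[S])_H \longrightarrow (\Q[S])_H \longrightarrow \Q \longrightarrow 0,
\]
and the lemma reduces to showing $\partial = 0$. This is exactly how the paper proceeds. (One slip: you write that $\partial = 0$ is equivalent to ``$\HH_1(H;\Q) \to \HH_1(H;\Q[S])$ being injective''; there is no such map in the sequence --- the correct reformulation is surjectivity of $\HH_1(H;\Q[S]) \to \HH_1(H;\Q)$, which you do recover in your final paragraph.)

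The genuine gap is that you miss the one fact that makes the argument immediate: for $g \geq 3$ one has $\HH_1(\PMod_{g,n}[\ell];\Q) = 0$. This is a theorem of Putman \cite[Corollary C]{PutmanFiniteIndexNote}, and it applies because $\PMod_{g,n}[\ell]$ contains all separating Dehn twists. With this in hand, the domain of $\partial$ vanishes and the proof is complete in one line; this is precisely where the hypothesis $g \geq 3$ enters. Your proposed workaround --- proving that $\HH_1(\mathrm{Stab}_{G[\ell]}(\alpha);\Q) \to \HH_1(G[\ell];\Q)$ is surjective via cutting along $\alpha$ and chasing five-term sequences --- is unnecessary and only sketched; moreover, the ``$g \geq 3$ to kill the relevant $H_1$ terms'' you invoke at the end is exactly the vanishing theorem above, so you would be using it anyway, just buried inside a longer argument.
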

\begin{proof}
Recall that
\[\tQ[\NewC_{g,n}]_{\PMod_{g,n}[\ell]} = \HH_0(\PMod_{g,n}[\ell];\tQ[\NewC_{g,n}]).\]
The long exact sequence in homology associated to the short exact sequence
\[0 \longrightarrow \tQ[\NewC_{g,n}] \longrightarrow \Q[\NewC_{g,n}] \longrightarrow \Q \longrightarrow 0\]
of $\PMod_{g,n}[\ell]$-modules contains the segment
\[\HH_1(\PMod_{g,n}[\ell];\Q) \rightarrow \HH_0(\PMod_{g,n}[\ell];\tQ[\NewC_{g,n}]) \rightarrow \HH_0(\PMod_{g,n}[\ell];\Q[\NewC_{g,n}]) \rightarrow \Q \rightarrow 0.\]
Since $\PMod_{g,n}[\ell]$ contains all Dehn twists about separating curves, a result of the third author
\cite[Corollary C]{PutmanFiniteIndexNote} implies that
$\HH_1(\PMod_{g,n}[\ell];\Q) = 0$; we note that the hypothesis of the result used here include the requirement that $g \geq 3$.  We conclude that
\begin{align*}
\tQ[\NewC_{g,n}]_{\PMod_{g,n}[\ell]} &\cong
\ker(\HH_0(\PMod_{g,n}[\ell];\Q[\NewC_{g,n}]) \rightarrow \Q)\\
&= \ker(\Q[\NewC_{g,n}]_{\PMod_{g,n}[\ell]} \rightarrow \Q) \\
&= \ker(\Q[\NewC_{g,n}/\PMod_{g,n}[\ell]] \rightarrow \Q) \\
&= \tQ[\NewC_{g,n}/\PMod_{g,n}[\ell]]. \qedhere
\end{align*}
\end{proof}

\begin{remark}
The fact that $\HH_1(\PMod_{g,n}[\ell];\Q) = 0$ for $g \geq 3$ is a special case of a conjecture of
Ivanov \cite{IvanovConjecture} saying that $\HH_1(\Gamma;\Q)=0$ for all
finite-index subgroup $\Gamma$ of $\PMod_{g,n}$ with $g \geq 3$.  This conjecture
has been verified in many cases -- in addition to the paper
\cite{PutmanFiniteIndexNote} we cited above, other recent results on it includes work of Ershov-He
\cite{ErshovHe} and the third author with Wieland \cite{PutmanWieland}.  We also remark that it is important that
we are working over $\Q$ since the abelianization of $\PMod_{g,n}[\ell]$ 
contains a large amount of exotic torsion (see \cite{PutmanPicard, SatoTorsion}).
\end{remark}

\section{Applications to algebraic geometry}
\label{section:algebraicgeometry}

In this final section, we prove Theorems \ref{maintheorem:cohcd1} and
\ref{maintheorem:cohcd2}.  The actual proofs are in \S \ref{section:agproofs}, which
is preceded by the preliminary \S \ref{section:cohcd} surveying basic properties
of coherent cohomological dimension.  As we said in the introduction, to make
this material accessible to topologists we give more details than would be
necessary for an audience of algebraic geometers.

\subsection{Coherent cohomological dimension}
\label{section:cohcd}

In this section, all varieties will be defined over $\C$.

Recall that for a variety $X$, the coherent cohomological dimension of $X$, denoted
$\cohcd(X)$, is the maximal $k$ such that there exists some quasi-coherent sheaf
$\cF$ on $X$ such that $\HH^k(X;\cF) \neq 0$.  This is always finite; indeed, we have the following:

\begin{lemma}
\label{lemma:cdbound}
For any variety $X$, we have $\cohcd(X) \leq \dim(X)$.
\end{lemma}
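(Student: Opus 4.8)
The plan is to bound $\cohcd(X)$ by exhibiting a small affine open cover of $X$ and invoking the standard Mayer--Vietoris / \v{C}ech argument sketched in the introduction. First I would reduce to the case where $X$ is irreducible: a general variety $X$ has finitely many irreducible components $X_1, \ldots, X_r$, and any quasi-coherent sheaf $\cF$ on $X$ restricts to quasi-coherent sheaves $\cF|_{X_i}$; using the closed covering of $X$ by the $X_i$ (or, more cleanly, induction on the number of components via the exact sequence relating $\cF$, its restriction to a component, and the kernel supported on the complement), one sees that $\HH^k(X;\cF)=0$ whenever $k>\max_i \dim X_i = \dim X$. So it suffices to treat irreducible $X$.

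For irreducible $X$ of dimension $d$, the key step is that $X$ admits a cover by $d+1$ affine open subsets. This is a classical fact: one builds the cover inductively, at each stage choosing a hypersurface section meeting every component of the complement of the affine opens chosen so far in something of strictly smaller dimension, and appealing to the fact that an open subset of an affine variety whose complement has pure codimension one is again affine (more precisely, the complement of an effective ample divisor is affine, and one arranges the sections to be hyperplane sections after a projective embedding). I would cite this rather than reprove it; a convenient reference is the discussion around affine coverings in Hartshorne. Granting a cover $X = U_0 \cup \cdots \cup U_d$ by affine opens, every finite intersection $U_{i_0} \cap \cdots \cap U_{i_k}$ is again affine (intersections of affines inside a separated scheme are affine), hence has vanishing higher coherent cohomology by Serre's criterion ($\HH^j(U;\cF)=0$ for $j\geq 1$ when $U$ is affine and $\cF$ quasi-coherent). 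Feeding this \v{C}ech cover into the Mayer--Vietoris / \v{C}ech-to-derived-functor spectral sequence $\HH^p(\text{nerve}, \underline{\HH}^q) \Rightarrow \HH^{p+q}(X;\cF)$, the only contributing terms have $q=0$ and $0 \leq p \leq d$ (the nerve of a $(d+1)$-element cover has simplices only in degrees $\leq d$), so $\HH^k(X;\cF)=0$ for all $k > d$ and all quasi-coherent $\cF$. Therefore $\cohcd(X) \leq d = \dim X$.

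The main obstacle is really just the existence of an affine open cover of an irreducible variety of dimension $d$ by $d+1$ sets — this is the one nontrivial input, and while it is standard it does take a little care (one wants it for not-necessarily-quasi-projective varieties, though for the applications in this paper all the moduli spaces involved are quasi-projective, so the quasi-projective case suffices). Everything else — Serre's affineness criterion, stability of affineness under intersection in a separated scheme, and the \v{C}ech spectral sequence — is entirely formal. Since the paper only needs this lemma for quasi-projective $X$, I would state the proof in that generality to keep it short, remarking that the general case follows by the reduction to components above combined with the quasi-projective case applied to a suitable affine-local argument, or simply by citing the general statement from the literature.
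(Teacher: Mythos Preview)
Your approach is correct for quasi-projective varieties but takes a much longer route than the paper. The paper's entire proof is a single citation: Grothendieck's vanishing theorem (Hartshorne, Theorem~III.2.7) states that for \emph{any} sheaf of abelian groups $\cF$ on a Noetherian topological space $X$, one has $\HH^k(X;\cF)=0$ for all $k>\dim X$. This applies directly to quasi-coherent sheaves on an arbitrary variety, with no reduction to irreducible or quasi-projective cases needed.

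Your \v{C}ech/Mayer--Vietoris argument is valid once you have an affine cover by $\dim X+1$ opens, but that covering bound is itself a nontrivial theorem---arguably deeper than the lemma you are proving---and your sketch of it (``choosing a hypersurface section meeting every component\ldots'') is too vague to stand on its own. Moreover, your closing remark that the general case follows from the quasi-projective case via the reduction to irreducible components does not work: irreducible varieties need not be quasi-projective, so that reduction buys you nothing toward full generality. In short, you have traded a one-line citation of a standard theorem for a harder auxiliary input that you only establish in a special case; for the applications in the paper this suffices, but as a proof of the lemma as stated it is both longer and weaker than simply invoking Grothendieck vanishing.
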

\begin{proof}
Grothendieck's vanishing theorem \cite[Theorem 2.7]{HartshorneBook} says that for {\em any}
sheaf $\cF$ of abelian groups on $X$, we have $\HH^k(X;\cF) = 0$ for $k>\dim(X)$.
\end{proof}

The most familiar quasi-coherent sheaves are the finite-rank locally free ones, that is, sections of finite-rank algebraic
vector bundles on $X$.  The following lemma shows that in most cases it is enough to consider
only these sheaves:

\begin{lemma}
\label{lemma:locallyfree}
For a quasi-projective variety $X$, we have that $\cohcd(X)$ is the maximal $k$ such that
there exists a finite-rank locally free sheaf $\cF$ on $X$ such that $\HH^k(X;\cF) \neq 0$.
\end{lemma}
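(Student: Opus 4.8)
The plan is to show that any nonzero cohomology group $\HH^k(X;\cF)$ with $\cF$ quasi-coherent can be ``detected'' by a finite-rank locally free sheaf in the same degree. Let $c$ denote the right-hand side (the max $k$ achieved by locally free sheaves) and let $d = \cohcd(X)$; the inequality $c \leq d$ is immediate, so the work is in proving $d \leq c$. First I would fix a quasi-coherent $\cF$ with $\HH^{d}(X;\cF) \neq 0$. Since $X$ is quasi-projective, hence Noetherian and separated, standard results (e.g.\ \cite[Ch.\ II, Ex.\ 5.15]{HartshorneBook} together with \cite[Ch.\ III]{HartshorneBook}) let us write $\cF$ as a filtered colimit $\cF = \varinjlim_{\alpha} \cF_\alpha$ of its coherent subsheaves. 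Because $X$ is Noetherian, sheaf cohomology commutes with filtered colimits of sheaves, so $\HH^{d}(X;\cF) = \varinjlim_\alpha \HH^{d}(X;\cF_\alpha)$; in particular there is a coherent subsheaf $\cG \subseteq \cF$ with $\HH^{d}(X;\cG) \neq 0$. This reduces us to the case of a coherent sheaf.

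Next I would upgrade from coherent to locally free. On a quasi-projective variety over $\C$ every coherent sheaf $\cG$ admits a finite resolution by finite-rank locally free sheaves,
\[0 \longrightarrow \cE_m \longrightarrow \cE_{m-1} \longrightarrow \cdots \longrightarrow \cE_0 \longrightarrow \cG \longrightarrow 0,\]
obtained by repeatedly surjecting from a sum of twists $\cO_X(n)^{\oplus N}$ (using that $X$ is quasi-projective, so such twisting sheaves exist and coherent sheaves are globally generated after twisting) and invoking Lemma \ref{lemma:cdbound}-type finiteness to guarantee the kernel eventually becomes locally free. Break this into short exact sequences $0 \to \cK_{i+1} \to \cE_i \to \cK_i \to 0$ with $\cK_0 = \cG$. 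The associated long exact sequences in cohomology, combined with the fact that $\HH^{j}(X;-)$ vanishes for $j > d = \cohcd(X)$ on all quasi-coherent sheaves, give surjections $\HH^{d}(X;\cE_0) \twoheadrightarrow \HH^{d}(X;\cK_0) = \HH^{d}(X;\cG)$ provided $\HH^{d+1}(X;\cK_1) = 0$ — which holds precisely because $d$ is the coherent cohomological dimension. Since $\HH^{d}(X;\cG) \neq 0$, we conclude $\HH^{d}(X;\cE_0) \neq 0$ with $\cE_0$ finite-rank locally free, so $d \leq c$, completing the argument.

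The main obstacle I anticipate is not any single deep input but rather assembling the right finiteness hypotheses cleanly: one must be careful that $X$ quasi-projective (not projective) still supplies a finite locally free resolution of an arbitrary coherent sheaf — this uses that $X$ is a locally closed subscheme of $\mathbb{P}^N$, so coherent sheaves extend to $\mathbb{P}^N$ (or one works with the ample line bundle $\cO_X(1)$ directly) and one truncates the resolution at length $\dim X$ using that the kernel of a map between locally free sheaves becomes locally free once it is a $(\dim X + 1)$-st syzygy over a regular base — but $X$ need not be smooth, so instead one should truncate using the vanishing $\HH^{>d}(X;-) = 0$ rather than global dimension. Spelling out this truncation correctly, so that the surjection onto $\HH^{d}(X;\cG)$ genuinely lands in the cohomology of a locally free sheaf, is the point that requires the most care.
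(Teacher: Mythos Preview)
Your proposal is correct and follows essentially the same approach as the paper: reduce from quasi-coherent to coherent via filtered colimits, then use a single surjection $\cE_0 \twoheadrightarrow \cG$ from a finite-rank locally free sheaf together with the vanishing of $\HH^{d+1}$ on the kernel to conclude $\HH^d(X;\cE_0)\neq 0$. The discussion of a full finite locally free resolution is unnecessary---as your own argument shows, only the first step is used---so the concerns in your final paragraph about truncation and singular $X$ never arise.
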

\begin{proof}
Set $k = \cohcd(X)$.  By Lemma \ref{lemma:cdbound}, we have $k<\infty$.  We must prove that
there exists some finite-rank locally free sheaf $\cF$ on $X$ such that $\HH^k(X;\cF) \neq 0$.
By definition, there exists a quasi-coherent sheaf $\cG$ on $X$ such that $\HH^k(X;\cG) \neq 0$.
Since every quasi-coherent sheaf is the direct limit of its coherent subsheaves \cite[Exercise II.5.15c]{HartshorneBook} 
and sheaf cohomology commutes with direct limits \cite[Proposition III.2.9]{HartshorneBook}, we
can assume that $\cG$ is coherent.  Since $X$ is quasi-projective, there exists some
finite-rank locally free sheaf $\cF$ along with a surjection $\pi\colon \cF \rightarrow \cG$; indeed, this
holds for projective $X$ \cite[Corollary 5.18]{HartshorneBook}, and in the quasi-projective case
we can embed $X$ as an open subvariety of a projective variety $Y$ and extend $\cG$ to a coherent sheaf
on $Y$ \cite[Exercise 5.15b]{HartshorneBook}.  Letting $\cG' = \ker(\pi)$, the long exact
sequence in cohomology associated to the short exact sequence of coherent sheaves
\[0 \longrightarrow \cG' \longrightarrow \cF \stackrel{\pi}{\longrightarrow} \cG \longrightarrow 0\]
contains the segment
\[\HH^k(X;\cF) \longrightarrow \HH^k(X;\cG) \longrightarrow \HH^{k+1}(X;\cG').\]
Since $k = \cohcd(X)$, we have $\HH^{k+1}(X;\cG') = 0$.  Since $\HH^k(X;\cG) \neq 0$, this implies
that $\HH^k(X;\cF) \neq 0$, as desired.
\end{proof}

The following says in particular that passing to a finite cover does not change the coherent
cohomological dimension:

\begin{theorem}[{Hartshorne, \cite[Proposition 1.1]{HartshorneDimension}}]
\label{theorem:finitecoh}
If $f\colon X \rightarrow X'$ is a finite surjective map between varieties, then $\cohcd(X) = \cohcd(X')$.
\end{theorem}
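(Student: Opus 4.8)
The plan is to prove the two inequalities $\cohcd(X)\le\cohcd(X')$ and $\cohcd(X')\le\cohcd(X)$ separately; the first is formal, and essentially all the content is in the second.

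\textbf{The easy inequality.} A finite morphism is affine, so for any quasi-coherent sheaf $\cF$ on $X$ one has $\cF$ quasi-coherent pushforward $f_*\cF$ and $R^qf_*\cF=0$ for $q>0$ (the stalk of $R^qf_*\cF$ is computed on the affine opens $f^{-1}(U)\subset X$ with $U\subset X'$ affine, where it vanishes by Serre). The Leray spectral sequence for $f$ therefore degenerates to an isomorphism $\HH^k(X;\cF)\cong\HH^k(X';f_*\cF)$ for every $k$. Setting $k=\cohcd(X)$ (finite by Lemma~\ref{lemma:cdbound}) and choosing $\cF$ with $\HH^k(X;\cF)\ne 0$, we get $\HH^k(X';f_*\cF)\ne 0$, hence $\cohcd(X')\ge\cohcd(X)$. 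The same argument applied to a closed immersion (which is also affine) gives the auxiliary fact that $\cohcd(W)\le\cohcd(X)$ for every closed subvariety $W\subseteq X$, which I will use below.

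\textbf{The hard inequality, normal case.} Given quasi-coherent $\cG$ on $X'$ with $\HH^k(X';\cG)\ne 0$, I must produce a quasi-coherent sheaf on $X$ with nonzero $\HH^k$; by the isomorphism above it suffices to exhibit $\cG$ as a direct summand of $f_*\cF$ for some quasi-coherent $\cF$ on $X$, so that $\HH^k(X;\cF)$ contains $\HH^k(X';\cG)\ne0$ as a summand. Exactly as in the proof of Lemma~\ref{lemma:locallyfree} (direct limits of coherent subsheaves, and cohomology commuting with direct limits) we may assume $\cG$ is coherent. The natural candidate is $\cF=f^*\cG$, for which $f_*\cF=\cG\otimes_{\cO_{X'}}f_*\cO_X$ and the adjunction unit $\cG\to f_*f^*\cG$ is obtained by tensoring the algebra unit $\cO_{X'}\to f_*\cO_X$ with $\cG$; so it is enough to split $\cO_{X'}\hookrightarrow f_*\cO_X$ as $\cO_{X'}$-modules. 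When $X'$ is normal (and integral, which one may arrange by passing to the normalization of a component of $X$ that dominates $X'$, an operation that only decreases $\cohcd(X)$ by the easy inequality) this is classical over $\C$: residue extensions are separable, the field trace of a function integral over $\cO_{X'}$ is again integral and hence regular by normality, and $\tfrac{1}{\deg f}\,\mathrm{Tr}_f\colon f_*\cO_X\to\cO_{X'}$ is the required retraction.

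\textbf{The general case and the main obstacle.} It remains to remove the normality hypothesis on $X'$, and here the argument becomes a Noetherian induction on $\dim X'=\dim X$ (the zero-dimensional case is trivial since such varieties are affine). After a standard dévissage one reduces to $\cG=\cO_Y$ for $Y\subseteq X'$ an integral closed subscheme: if $\dim Y<\dim X'$ this is handled by the induction hypothesis applied to $f^{-1}(Y)\to Y$, and if $Y=X'$ then $X'$ is integral and one compares it with its normalization $\nu\colon\widetilde{X'}\to X'$ via the conductor square attached to the conductor ideal $\mathfrak{c}\subseteq\cO_{X'}$, whose support $N'$ (the non-normal locus) has $\dim N'<\dim X'$. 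The short exact sequence
\[0\longrightarrow\cO_{X'}\longrightarrow \nu_*\cO_{\widetilde{X'}}\oplus\cO_{N'}\longrightarrow\cO_{\widetilde{N'}}\longrightarrow 0\]
and its associated long exact cohomology sequence reduce the problem to (i) the normal case already treated, applied to the lift of $f$ to the normalizations, and (ii) finite surjective maps over the lower-dimensional varieties $N'$ and $\widetilde{N'}$, which are controlled by the induction hypothesis together with the bound $\cohcd(W)\le\cohcd(X)$ for closed subvarieties (applied to the preimages in $X$ of these loci). The main obstacle is precisely the bookkeeping in this last part: tracking which finite surjective morphism is invoked at each stage, checking that all sheaves remain (quasi-)coherent, and matching cohomological degrees correctly across the Mayer--Vietoris sequences. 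This is carried out in full by Hartshorne in \cite[Proposition~1.1]{HartshorneDimension}, whose argument I would follow.
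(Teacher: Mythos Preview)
The paper does not give its own proof of this theorem: it is stated with attribution to Hartshorne \cite[Proposition~1.1]{HartshorneDimension} and used as a black box throughout \S\ref{section:algebraicgeometry}. So there is nothing to compare against, strictly speaking. Your sketch is a reasonable outline of the standard argument (and indeed you say you are following Hartshorne); the easy inequality is exactly Lemma~\ref{lemma:affinemorphism} of the paper, specialized to finite (hence affine) morphisms.

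One small wording issue worth fixing: in the normal case, your parenthetical ``which one may arrange by passing to the normalization of a component of $X$ that dominates $X'$'' makes $X$ integral and normal, but it does not touch $X'$. The normality of $X'$ is the genuine hypothesis of that paragraph, needed so that the trace of an integral element lies in $\cO_{X'}$; you then remove this hypothesis in the subsequent Noetherian induction via the conductor square. As written, the sentence reads as if the reduction makes $X'$ normal, which it does not.
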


We now turn to some calculations.  The first is as follows.

\begin{lemma}
\label{lemma:cohpn}
We have $\cohcd(\bbP^n) = n$.
\end{lemma}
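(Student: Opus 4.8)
The plan is to prove the two inequalities $\cohcd(\bbP^n) \leq n$ and $\cohcd(\bbP^n) \geq n$ separately. The upper bound is immediate: by Lemma \ref{lemma:cdbound} we have $\cohcd(\bbP^n) \leq \dim(\bbP^n) = n$. So the content is the lower bound, and for that, by Lemma \ref{lemma:locallyfree} (which applies since $\bbP^n$ is projective, hence quasi-projective), it suffices to exhibit a single finite-rank locally free sheaf $\cF$ on $\bbP^n$ with $\HH^n(\bbP^n;\cF) \neq 0$.

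First I would take $\cF$ to be a line bundle $\cO_{\bbP^n}(d)$ for a suitably negative $d$. The classical computation of the cohomology of line bundles on projective space (\cite[Theorem III.5.1]{HartshorneBook}) shows that $\HH^n(\bbP^n;\cO_{\bbP^n}(d)) \neq 0$ precisely when $d \leq -n-1$; in particular $\HH^n(\bbP^n;\cO_{\bbP^n}(-n-1)) \cong \C$, which may alternatively be seen from Serre duality together with $\omega_{\bbP^n} \cong \cO_{\bbP^n}(-n-1)$ and $\HH^0(\bbP^n;\cO_{\bbP^n}) \cong \C$. Taking $\cF = \cO_{\bbP^n}(-n-1)$ then yields $\cohcd(\bbP^n) \geq n$, and combining with the upper bound gives $\cohcd(\bbP^n) = n$.

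There is essentially no obstacle here: the only input beyond the two lemmas already established is the standard calculation of the top cohomology of a line bundle on $\bbP^n$, which is a routine citation. The only mild subtlety is bookkeeping — one should note that $\cO_{\bbP^n}(-n-1)$ is coherent, so it directly serves as the required quasi-coherent sheaf in the definition of $\cohcd$ even without invoking Lemma \ref{lemma:locallyfree}; invoking that lemma is optional convenience rather than a necessity.
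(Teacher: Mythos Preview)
Your proposal is correct and follows essentially the same route as the paper: the upper bound via Lemma~\ref{lemma:cdbound}, and the lower bound by exhibiting $\HH^n(\bbP^n;\cO(-n-1)) \cong \C$ with a citation to \cite[Theorem III.5.1]{HartshorneBook}. Your extra remarks about Lemma~\ref{lemma:locallyfree} and Serre duality are fine but unnecessary; the paper's proof is just the two-line version of what you wrote.
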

\begin{proof}
By Lemma \ref{lemma:cdbound}, we have $\cohcd(\bbP^n) \leq n$.  This inequality is an equality since
$\HH^n(\bbP^n;\cO(-n-1)) \cong \C$; see \cite[Theorem III.5.1]{HartshorneBook}.
\end{proof}

\begin{corollary}
\label{corollary:projectivedim}
If $X$ is a projective variety of dimension $n$, then $\cohcd(X) = n$.
\end{corollary}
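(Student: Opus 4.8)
The plan is to reduce to the case of projective space by means of a finite surjective morphism. The upper bound is immediate: Lemma~\ref{lemma:cdbound} gives $\cohcd(X) \le \dim(X) = n$, so the whole content of the corollary is the reverse inequality $\cohcd(X) \ge n$, i.e.\ the existence of a quasi-coherent sheaf $\cF$ on $X$ with $\HH^n(X;\cF)\neq 0$.

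First I would reduce to the case that $X$ is irreducible. Let $X_0 \subseteq X$ be an irreducible component with $\dim(X_0) = n$ and let $i\colon X_0 \hookrightarrow X$ be the inclusion. Since $i$ is a closed immersion it is in particular finite, so $i_{\ast}$ is exact on quasi-coherent sheaves and satisfies $\HH^k(X; i_{\ast}\cG) \cong \HH^k(X_0;\cG)$ for every quasi-coherent $\cG$ on $X_0$ and every $k$; hence $\cohcd(X) \ge \cohcd(X_0)$, and it suffices to treat $X_0$. So assume $X$ is projective and irreducible of dimension $n$. Fixing a closed embedding $X \subseteq \bbP^N$ and projecting away from a general linear subspace of dimension $N-n-1$ disjoint from $X$ (here one uses that $\C$ is infinite) produces a finite surjective morphism $\pi\colon X \to \bbP^n$; this is the projective form of Noether normalization. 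Theorem~\ref{theorem:finitecoh} then gives $\cohcd(X) = \cohcd(\bbP^n)$, and Lemma~\ref{lemma:cohpn} identifies this with $n$. Combining with the upper bound yields $\cohcd(X) = n$.

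The one substantive ingredient is the existence of the finite surjective morphism $\pi\colon X \to \bbP^n$, and I expect this to be the main obstacle to spell out; it is, however, entirely standard (generic linear projection). Everything else is a formal consequence of Theorem~\ref{theorem:finitecoh} and Lemma~\ref{lemma:cohpn}. One could also apply Noether normalization directly to the possibly reducible projective scheme $X$, obtaining a finite surjective map to $\bbP^{\dim X}$ without passing to a component first; I would include the reduction to the irreducible case only to keep the geometric input as elementary as possible.
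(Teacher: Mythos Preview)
Your proposal is correct and follows essentially the same approach as the paper: construct a finite surjective morphism $X \to \bbP^n$ via linear projection (Noether normalization), then apply Theorem~\ref{theorem:finitecoh} and Lemma~\ref{lemma:cohpn}. The paper omits both the explicit upper bound and the reduction to an irreducible component, since Theorem~\ref{theorem:finitecoh} already yields the equality $\cohcd(X) = \cohcd(\bbP^n)$ directly.
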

\begin{proof}
By a sequence of linear projections from points
of projective space not lying in $X$ we can construct a finite surjective morphism $X \rightarrow \bbP^n$ (this
is a geometric version of Noether Normalization).  The corollary now follows from Theorem \ref{theorem:finitecoh}
and Lemma \ref{lemma:cohpn}.
\end{proof}

The following result shows that affine varieties behave very differently from projective ones.

\begin{theorem}[{Serre, \cite[Theorem 3.7]{HartshorneBook}}]
\label{theorem:affine}
A variety $X$ is affine if and only if $\cohcd(X) = 0$.
\end{theorem}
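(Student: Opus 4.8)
This is Serre's cohomological criterion for affineness, and the plan is to reproduce the classical argument (see \cite[Theorem III.3.7]{HartshorneBook}); note that since a variety over $\C$ is a Noetherian separated scheme, our $X$ is automatically quasi-compact and quasi-separated, which is what makes the argument work. One direction is immediate: if $X = \operatorname{Spec} A$ is affine, then every quasi-coherent sheaf on $X$ is acyclic (Cartan--Serre vanishing), so $\cohcd(X)=0$, and it is not negative since $\HH^0(X;\cO_X)=A\neq 0$. The content is the converse, so assume $\cohcd(X)=0$; I will show $X$ is affine by producing finitely many global functions generating the unit ideal whose distinguished opens are affine, and then invoking the standard gluing criterion.

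The key step is: for each closed point $P\in X$, find $f\in A:=\Gamma(X;\cO_X)$ with $f(P)\neq 0$ and $X_f$ affine. To do this I would choose an affine open $U\ni P$, set $Y = X\setminus U$ with its reduced structure and $Z = Y\cup\{P\}$, and consider the short exact sequence of quasi-coherent sheaves
\[
0 \longrightarrow \mathcal{I}_{Z} \longrightarrow \mathcal{I}_{Y} \longrightarrow \mathcal{I}_{Y}/\mathcal{I}_{Z} \longrightarrow 0,
\]
where $\mathcal{I}_{Y}/\mathcal{I}_{Z}$ is the skyscraper sheaf at $P$ (with stalk $\C$, since $P$ is isolated in $Z$). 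The hypothesis $\cohcd(X)=0$ gives $\HH^1(X;\mathcal{I}_{Z})=0$, so on global sections $\Gamma(X;\mathcal{I}_{Y})\to\C$ is onto; pick $f$ in $\Gamma(X;\mathcal{I}_{Y})\subset A$ hitting a nonzero element. Then $f$ vanishes on $Y$, so $X_f\subset U$, hence $X_f$ is a distinguished open of the affine scheme $U$ and is therefore affine, and $P\in X_f$.

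Next I would use quasi-compactness to extract a finite subcover $X = X_{f_1}\cup\cdots\cup X_{f_r}$. The sheaf map $\cO_X^{\oplus r}\to\cO_X$ sending $(a_i)$ to $\sum a_i f_i$ is surjective, and its kernel $\cF$ is quasi-coherent, so $\HH^1(X;\cF)=0$ forces $\Gamma(X;\cO_X^{\oplus r})\to\Gamma(X;\cO_X)$ to be surjective; thus $1 = \sum a_i f_i$ for suitable $a_i\in A$, i.e.\ the $f_i$ generate the unit ideal in $A$. Finally, the canonical morphism $\varphi\colon X\to\operatorname{Spec} A$ pulls $D(f_i)$ back to $X_{f_i}$, and because $X$ is quasi-compact and quasi-separated one has $\Gamma(X_{f_i};\cO_X)=A_{f_i}$; since $X_{f_i}$ is affine, $\varphi$ therefore restricts to an isomorphism $X_{f_i}\xrightarrow{\ \sim\ } D(f_i)$. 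As the $D(f_i)$ cover $\operatorname{Spec} A$, these glue to an isomorphism $X\cong\operatorname{Spec} A$.

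The main obstacle is really the last assembly step — the fact that if finitely many global functions $f_i$ generate the unit ideal and each $X_{f_i}$ is affine then $X$ is affine. This is where quasi-separatedness is genuinely needed, through the identity $\Gamma(X_f;\cO_X)=A_f$; it can fail for general schemes but holds automatically for varieties, so in our setting it causes no trouble. Everything else is bookkeeping with long exact sequences and the hypothesis $\cohcd(X)=0$.
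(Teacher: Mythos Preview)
The paper does not give a proof of this theorem at all; it simply states it with attribution to Serre and a citation to \cite[Theorem 3.7]{HartshorneBook}, and then uses it as a black box. Your write-up is the standard classical argument (essentially the one in Hartshorne) and is correct, so there is nothing to compare: you have supplied a proof where the paper supplies only a reference.
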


If $X$ is a smooth projective curve, then Corollary \ref{corollary:projectivedim} implies
that $\cohcd(X) = 1$.  Our next lemma shows that removing
any finite set of points from such an $X$ causes the coherent cohomological dimension to drop to $0$:

\begin{lemma}
\label{lemma:affinecurves}
Let $X$ be a smooth projective curve and let $P \subset X$ be a nonempty finite set.  Then
$X \setminus P$ is affine.  In other words, $\cohcd(X \setminus P) = 0$.
\end{lemma}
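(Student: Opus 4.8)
The plan is to realize $X \setminus P$ as a closed subvariety of affine space and then invoke Serre's criterion (Theorem~\ref{theorem:affine}), which gives $\cohcd(X \setminus P) = 0$ as soon as $X \setminus P$ is affine.

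First I would let $D = \sum_{p \in P} p$ be the reduced effective divisor on $X$ supported exactly on $P$; since $P \neq \emptyset$ we have $\deg D = |P| > 0$. On a smooth projective curve every divisor of positive degree is ample, and in fact any divisor of degree at least $2g+1$ is very ample (by Riemann--Roch). So I would fix $m > 0$ large enough that $mD$ is very ample. The complete linear system $|mD|$ then gives a closed embedding $\iota\colon X \hookrightarrow \bbP^N$, and the canonical section of $\cO(mD)$ (whose divisor of zeros is $mD$) exhibits $mD$ as the pullback $\iota^{*}H$ of a hyperplane $H \subset \bbP^N$. In particular $\iota^{-1}(H) = \mathrm{supp}(mD) = P$ as sets, so $\iota$ restricts to an isomorphism of $X \setminus P$ onto the closed subvariety $\iota(X) \cap (\bbP^N \setminus H)$ of $\bbP^N \setminus H \cong \bbA^N$. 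A closed subvariety of an affine variety is affine, so $X \setminus P$ is affine, and Theorem~\ref{theorem:affine} finishes the proof.

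I do not expect a genuine obstacle, as the statement is classical; the only point needing a little care is arranging that $H$ meets $X$ in exactly $P$ rather than merely containing it, which is why one takes $H$ to correspond to the canonical section of $\cO(mD)$. As an alternative route avoiding ampleness theory, Riemann--Roch produces for $m \gg 0$ a rational function $f$ on $X$ having a pole at every point of $P$ and no other poles: pick $f \in \HH^0(X;\cO(mD))$ lying outside each of the finitely many proper subspaces $\HH^0(X;\cO(mD - p))$ with $p \in P$, which is possible since a vector space over the infinite field $\C$ is not a finite union of proper subspaces. Then $f$ defines a finite morphism $X \to \bbP^1$ with $f^{-1}(\infty) = P$, so $X \setminus P = f^{-1}(\bbA^1)$ is finite over $\bbA^1$ and hence affine.
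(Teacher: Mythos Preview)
Your proposal is correct, and your alternative route---using Riemann--Roch to produce a finite morphism $f\colon X \to \bbP^1$ with $f^{-1}(\infty) = P$ and concluding that $X \setminus P = f^{-1}(\bbA^1)$ is affine---is exactly the argument the paper gives.

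Your primary route is a mild variant: instead of mapping to $\bbP^1$, you embed $X$ in $\bbP^N$ via $|mD|$ so that a hyperplane cuts out precisely $P$, exhibiting $X \setminus P$ as a closed subvariety of $\bbA^N$. This is the one-dimensional instance of the general principle that the complement of an ample divisor in a projective variety is affine, which the paper in fact spells out later (in the paragraph preceding Lemma~\ref{lemma:removedivisor}). Either approach is entirely standard; the ampleness argument has the advantage of generalizing verbatim to higher dimensions, while the finite-map-to-$\bbP^1$ argument stays closer to the tools (Theorem~\ref{theorem:finitecoh}) already developed in this section.
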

\begin{proof}
The Riemann--Roch theorem implies that we can find a finite map
$f\colon X \rightarrow \bbP^1$ such that $f^{-1}(\infty) = P$ (as a set, not as a
divisor -- we are ignoring multiplicities).  It follows that $f|_{X \setminus P}$ is a finite surjective
map to $\bbA^1$, which has coherent cohomological dimension $0$ by Theorem \ref{theorem:affine}.
The lemma now follows from Theorem \ref{theorem:finitecoh}.  An alternate way of concluding the
proof is to use the fact that a finite morphism is affine, so $f^{-1}(\bbA^1) = X \setminus P$
is affine.
\end{proof}

Our final two results concern the effect of morphisms on coherent cohomological dimension.  The first is as
follows:

\begin{lemma}
\label{lemma:affinemorphism}
Let $f\colon X \rightarrow Y$ be an affine morphism between varieties.  Then $\cohcd(X) \leq \cohcd(Y)$.
\end{lemma}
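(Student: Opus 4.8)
The plan is to exploit the defining cohomological feature of affine morphisms: they have no higher direct images on quasi-coherent sheaves. Concretely, I would first recall (citing \cite[Exercise III.4.1]{HartshorneBook}, or the discussion in \cite[III.8]{HartshorneBook}) that if $f\colon X \rightarrow Y$ is an affine morphism of varieties, then for every quasi-coherent sheaf $\cF$ on $X$ the pushforward $f_{\ast}\cF$ is again quasi-coherent, and the higher direct images vanish: $R^q f_{\ast}\cF = 0$ for all $q > 0$. The key input here is local: over an affine open $V = \Spec A \subset Y$, the preimage $f^{-1}(V) = \Spec B$ is affine, $\cF|_{f^{-1}(V)}$ is the sheaf associated to a $B$-module, and quasi-coherent sheaves on affine schemes have no higher cohomology (Serre's theorem, \cite[Theorem 3.7]{HartshorneBook}, which appears as Theorem~\ref{theorem:affine} above).

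Next I would invoke the Leray spectral sequence for $f$ and $\cF$:
\[
E_2^{pq} = \HH^p(Y; R^q f_{\ast}\cF) \Longrightarrow \HH^{p+q}(X;\cF).
\]
Since $R^q f_{\ast}\cF = 0$ for $q > 0$, this spectral sequence collapses onto the row $q=0$, yielding natural isomorphisms
\[
\HH^k(X;\cF) \cong \HH^k(Y; f_{\ast}\cF) \quad \text{for all } k \geq 0.
\]

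Finally I would feed in the hypothesis on $Y$: set $d = \cohcd(Y)$. For any $k > d$ and any quasi-coherent $\cF$ on $X$, the sheaf $f_{\ast}\cF$ is quasi-coherent on $Y$ by the first step, so $\HH^k(Y; f_{\ast}\cF) = 0$ by the definition of $\cohcd$, and hence $\HH^k(X;\cF) = 0$. As this holds for every quasi-coherent $\cF$ on $X$, we conclude $\cohcd(X) \leq d = \cohcd(Y)$, as desired. The only real content is the vanishing of higher direct images for affine morphisms, which is where I expect to spend the most care in citing; everything after that is a formal consequence of the Leray spectral sequence and the definition of coherent cohomological dimension.
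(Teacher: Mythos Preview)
Your proposal is correct and follows essentially the same route as the paper: the paper also notes that for an affine morphism the Leray spectral sequence degenerates (citing \cite[Exercise III.8.2]{HartshorneBook}, with Theorem~\ref{theorem:affine} as the underlying reason) to give $\HH^k(X;\cF) \cong \HH^k(Y;f_{\ast}\cF)$, and then concludes since $f_{\ast}\cF$ is quasi-coherent. Your write-up simply unpacks this degeneration in slightly more detail.
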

\begin{proof}
Let $\cF$ be a quasi-coherent sheaf on $X$.  Since $f$ is affine, the Leray spectral sequence
of $f$ degenerates (see \cite[Exercise III.8.2]{HartshorneBook}; the key point here
is Theorem \ref{theorem:affine}) to show that
\[\HH^k(X;\cF) \cong \HH^k(Y;f_{\ast} \cF) \quad \quad \text{for all $k$}.\]
The sheaf $f_{\ast} \cF$ on $Y$ is quasi-coherent, so for $k> \cohcd(Y)$ we have $\HH^k(X;\cF) = 0$.
The lemma follows.
\end{proof}

\begin{remark}
Recalling that finite morphisms are affine, one might hope that like in Theorem \ref{theorem:finitecoh},
equality always hold in Lemma \ref{lemma:affinemorphism}.  Unfortunately, this is false.
Indeed, by the ``Jouanolou trick'' (see \cite[Lemma 1.5]{JouanolouTrick}),
for any quasi-projective variety $Y$, there exists a surjective affine morphism $X \rightarrow Y$
with $X$ an affine variety (so $\cohcd(X) = 0$ by Theorem \ref{theorem:affine}, while $\cohcd(Y)$ could be
anything).
\end{remark}

Our final result concerns the coherent cohomological dimension of flat families:

\begin{lemma}
\label{lemma:flat}
Let $f\colon X \rightarrow Y$ be a flat projective morphism between quasi-projective varieties.
Letting $r$ be the dimension of the fibers of $f$, we then have $\cohcd(X) \leq \cohcd(Y)+r$.
\end{lemma}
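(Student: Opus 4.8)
The plan is to prove the contrapositive-free statement directly: for every quasi-coherent sheaf $\cF$ on $X$ we will show $\HH^k(X;\cF)=0$ whenever $k>\cohcd(Y)+r$, which by definition of coherent cohomological dimension is exactly the asserted bound. First I would reduce to the case that $\cF$ is coherent, exactly as in the proof of Lemma~\ref{lemma:locallyfree}: every quasi-coherent sheaf on the Noetherian scheme $X$ is the direct limit of its coherent subsheaves, and sheaf cohomology commutes with direct limits, so it suffices to bound $\HH^k(X;\cF)$ for $\cF$ coherent.

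The main tool is the Leray spectral sequence of $f$,
\[E_2^{pq}=\HH^p(Y;R^q f_{\ast}\cF)\Longrightarrow \HH^{p+q}(X;\cF),\]
together with two vanishing facts about its $E_2$-page. The first is that, since $f$ is projective, each higher direct image $R^q f_{\ast}\cF$ is a coherent sheaf on $Y$ (the finiteness theorem for projective morphisms, \cite[Theorem III.8.8]{HartshorneBook}); hence $\HH^p(Y;R^q f_{\ast}\cF)=0$ for all $p>\cohcd(Y)$, immediately from the definition of $\cohcd(Y)$. The second is that, because every fiber of $f$ has dimension at most $r$, one has $R^q f_{\ast}\cF=0$ for all $q>r$. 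Granting these, the term $E_2^{pq}$ can be nonzero only when $p\le\cohcd(Y)$ and $q\le r$, hence only when $p+q\le\cohcd(Y)+r$. Passing to $E_\infty$, we conclude $\HH^k(X;\cF)=0$ for $k>\cohcd(Y)+r$, as needed.

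The one step that requires genuine work is the vanishing $R^q f_{\ast}\cF=0$ for $q>r$, and this is where I would spend the effort. Since $R^q f_{\ast}\cF$ is coherent, it is zero as soon as its completion at each point $y\in Y$ is zero. By the theorem on formal functions \cite[Theorem III.11.1]{HartshorneBook}, this completion is $\varprojlim_n \HH^q(X_n;\cF_n)$, where $X_n$ denotes the $n$-th infinitesimal neighborhood in $X$ of the fiber $X_y=f^{-1}(y)$ and $\cF_n=\cF|_{X_n}$. Each $X_n$ has the same underlying topological space as $X_y$, a scheme of dimension at most $r$, so Grothendieck's vanishing theorem (as already used in Lemma~\ref{lemma:cdbound}) gives $\HH^q(X_n;\cF_n)=0$ for $q>r$; therefore the inverse limit vanishes. (I would also remark that flatness of $f$ plays no role in this argument: it is kept in the hypotheses only because that is the situation in which the lemma is applied below.)

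In summary, the whole argument is a short manipulation of the Leray spectral sequence, and the only real obstacle is controlling the top-degree higher direct images $R^q f_{\ast}\cF$ via the formal functions theorem; everything else is formal.
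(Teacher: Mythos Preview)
Your proof is correct and shares the paper's overall architecture: both run the Leray spectral sequence for $f$ and argue that $E_2^{pq}$ vanishes once $p>\cohcd(Y)$ or $q>r$. The genuine difference lies in how the vanishing $R^q f_{\ast}\cF=0$ for $q>r$ is obtained. The paper first invokes Lemma~\ref{lemma:locallyfree} to reduce to a finite-rank locally free $\cF$, so that $\cF$ is flat over $Y$ (this is where the hypothesis that $f$ is flat enters), and then applies cohomology and base change \cite[Theorem III.12.11]{HartshorneBook} to identify $(R^q f_{\ast}\cF)\otimes k(y)$ with $\HH^q(X_y;\cF_y)=0$. You instead reduce only to coherent $\cF$ and appeal to the theorem on formal functions, which is essentially \cite[Corollary III.11.2]{HartshorneBook} and needs no flatness at all. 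Your route is slightly more elementary and, as you observe, shows that the flatness hypothesis is not actually used; the paper's route trades that generality for a reduction (to locally free sheaves) that is already packaged in Lemma~\ref{lemma:locallyfree}.
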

\begin{proof}
Set $k = \cohcd(X)$.  By Lemma \ref{lemma:locallyfree}, we can find a finite-rank
locally free sheaf $\cF$ on $X$ such that $\HH^k(X;\cF) \neq 0$.  We then have the
Leray spectral sequence
\[E_2^{pq} = \HH^p(Y;R^q f_{\ast}(\cF)) \Rightarrow \HH^{p+q}(X;\cF).\]
Since the higher direct images $R^q f_{\ast}(\cF)$ are quasi-coherent, we have
$E_2^{pq} = 0$ for $p>\cohcd(Y)$.  Below we will prove that $R^q f_{\ast}(\cF) = 0$
for $q>r$, so $E_2^{pq} = 0$ for $q>r$.  Since $\HH^k(X;\cF) \neq 0$, we must
therefore have $k \leq \cohcd(Y)+r$, as desired.

Fix some $q>r$.  It remains to prove that $R^q f_{\ast}(\cF) = 0$.  Consider a point $y_0 \in Y$.  We will prove
that 
\[(R^q f_{\ast}(\cF)) \otimes k(y_0) = 0.\] 
Letting $X_{y_0} = f^{-1}(y_0)$, there is a natural map
\begin{equation*}
\label{eqn:highermap}
\eta: (R^q f_{\ast}(\cF)) \otimes k(y_0) \rightarrow \HH^q(X_{y_0};\cF_{y_0});
\end{equation*}
see \cite[\S III.12]{HartshorneBook}.
Since $X_{y_0}$ has dimension $r$, Lemma \ref{lemma:cdbound} implies that $\HH^q(X_{y_0};\cF_{y_0}) = 0$.  
It is thus enough to prove that the map $\eta$ is an isomorphism.

Since $\cF$ is locally free on $X$, it is flat over $X$ and thus flat over $Y$.  What is more, since
its target  is $0$, the map $\eta$ is trivially surjective.
Under these assumptions ($f$ a projective morphism, $\cF$ a coherent sheaf on $X$ that is flat over $Y$, surjectivity of $\eta$), we can apply
\cite[Theorem 12.11 (Cohomology and base change)]{HartshorneBook} to deduce that $\eta$ is
an isomorphism, as desired.
\end{proof}

\subsection{Algebraic geometry proofs}
\label{section:agproofs}

We finally prove Theorems \ref{maintheorem:cohcd1} and \ref{maintheorem:cohcd2}.

\begin{proof}[Proof of Theorem \ref{maintheorem:cohcd1}]
We first recall the statement.  Fix some $g \geq 2$ and $n \geq 1$.  We
must prove that $\cohcd(\Moduli_{g,n}) \geq g-1$.  Assume for the sake
of contradiction that $\cohcd(\Moduli_{g,n}) \leq g-2$.  Harer's
computation of the virtual cohomological dimension of the mapping class
group \eqref{eqn:vcdformula} says that the virtual cohomological dimension
of $\PMod_{g,n}$ is $4g-4+n$.  We will prove below that our assumption that
$\cohcd(\Moduli_{g,n}) \leq g-2$ implies that $\HH^{4g-4+n}(\PMod_{g,n}[\ell];\C) = 0$
for all $\ell \geq 3$.  However, Theorem \ref{maintheorem:highdim}
implies that $\HH^{4g-4+n}(\PMod_{g,n}[\ell];\C) \neq 0$, a contradiction.

It remains to prove that our assumption $\cohcd(\Moduli_{g,n}) \leq g-2$ implies that 
\begin{equation}
\label{eqn:toprovecohcd}
\HH^{4g-4+n}(\PMod_{g,n}[\ell];\C) = 0 \quad \quad \text{for all $\ell \geq 3$}.
\end{equation}
Fix some $\ell \geq 3$ and let $\Moduli_{g,n}[\ell]$ be\footnote{Warning:
this notation is also commonly used to denote a different space, namely, 
the moduli space of curves with a full level-$\ell$
structure, which corresponds to the kernel of the action of $\PMod_{g,n}$ on
$\HH_1(\Sigma_g;\Z/\ell)$ obtained by filling in the punctures.  The space
$\Moduli_{g,n}[\ell]$ is a cover of this.} 
the finite cover
of $\Moduli_{g,n}$ corresponding to the subgroup $\PMod_{g,n}[\ell]$ of
its (orbifold) fundamental group $\PMod_{g,n}$.  Recall that the singularities
of $\Moduli_{g,n}$ exist due to the presence of curves with automorphisms (for instance,
from the point of view of Teichm\"{u}ller theory, the singularities come from the fixed
points of the action of the mapping class group on Teichm\"{u}ller space).  Our
assumption $\ell \geq 3$ implies that the group $\PMod_{g,n}[\ell]$ is torsion-free, so
$\Moduli_{g,n}[\ell]$ is a smooth variety.

The map
$\Moduli_{g,n}[\ell] \rightarrow \Moduli_{g,n}$ is a finite surjective map,
so by Theorem \ref{theorem:finitecoh} we have
\begin{equation}
\label{eqn:cohdimass}
\cohcd(\Moduli_{g,n}[\ell]) = \cohcd(\Moduli_{g,n}) \leq g-2.
\end{equation}
Since $\Moduli_{g,n}[\ell]$ is smooth, 
we can
calculate its cohomology using de Rham cohomology.  
The Hodge--de Rham spectral sequence for $\Moduli_{g,n}[\ell]$ converges to
$\HH^{\bullet}(\Moduli_{g,n}[\ell];\C)$ and has
\[E_1^{pq} = \HH^q(\Moduli_{g,n}[\ell];\Omega^p).\]
Since the complex dimension of $\Moduli_{g,n}[\ell]$ is $3g-3+n$, we have $\Omega^q = 0$ for
$q \geq 3g-2+n$, and thus
\begin{equation}
\label{eqn:e11}
E_1^{pq} = 0 \quad \quad (q \geq 3g-2+n).
\end{equation}
Moreover, since $\Omega^q$ is a coherent sheaf on $\Moduli_{g,n}[\ell]$ equation
\eqref{eqn:cohdimass} implies that
\begin{equation}
\label{eqn:e12}
E_1^{pq} = \HH^p(\Moduli_{g,n}[\ell];\Omega^q) = 0 \quad \quad (p \geq g-1).
\end{equation}
From \eqref{eqn:e11} and \eqref{eqn:e12},
we deduce that $E_1^{pq} = 0$ whenever $p+q = 4g-4+n$.  We conclude
that \eqref{eqn:toprovecohcd} holds, as desired.
\end{proof}

Before proving Theorem~\ref{maintheorem:cohcd2} we require one further lemma.  It is a generalization to the relative setting
of the familiar fact that if $X$ is a projective variety and $D \subset X$ is an ample divisor, then
$X \setminus D$ is affine (quick proof: by the definition of an ample divisor, there exists an embedding
$X \hookrightarrow \bbP^n$ whose hyperplane section at infinity is a positive multiple of $D$, so $X \setminus D$
is a closed subvariety of $\bbA^n$).

\begin{lemma}
\label{lemma:removedivisor}
Let $f\colon X \rightarrow Y$ be a projective morphism of varieties and let $D$ be a divisor on $X$
whose intersection with each fiber $X_{y} = f^{-1}(y)$ is ample.  Then the restriction of $f$ to
$X \setminus D$ is affine.
\end{lemma}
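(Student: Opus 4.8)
The plan is to deduce this from the absolute statement quoted just before the lemma, upgrading ``ample'' to ``relatively ample'' via the fiberwise criterion for relative ampleness. Since being an affine morphism is local on the target, I would first replace $Y$ by an arbitrary affine open subset, reducing to the case $Y = \operatorname{Spec} A$ affine (and Noetherian, since everything in sight is of finite type over $\C$); it then suffices to show that the variety $X \setminus D$ is affine.

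The key point is that $\cO_X(D)$ is relatively ample for $f$. Indeed, $f$ is projective and hence proper and of finite type over the Noetherian scheme $Y$, and by hypothesis $\cO_X(D)|_{X_y}$ is ample on every fiber $X_y$; the fiberwise criterion for relative ampleness (e.g., EGA~III, Cor.~4.7.1, or reduce to closed fibers using openness of the ample locus) then shows that $\cO_X(D)$ is $f$-ample. Given this, I would invoke the standard theory of relatively ample line bundles over an affine base: for a suitable integer $m \geq 1$, finitely many global sections of $f_{\ast}\cO_X(mD)$ generate $\cO_X(mD)$ and define a closed immersion $\iota \colon X \hookrightarrow \bbP^N \times_{\C} \operatorname{Spec} A$ over $Y$ with $\iota^{\ast}\cO(1) \cong \cO_X(mD)$. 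Since $D$ is effective, we may take this generating set to include the $m$-th power of the canonical section of $\cO_X(D)$, so that the coordinate hyperplane $\{x_0 = 0\}$ pulls back under $\iota$ to the divisor $mD$.

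Since $D$ and $mD$ have the same support, $\iota$ then carries $X \setminus D = X \setminus mD$ isomorphically onto a closed subscheme of $(\bbP^N \times_{\C} \operatorname{Spec} A) \setminus \{x_0 = 0\} \cong \bbA^N \times_{\C} \operatorname{Spec} A = \operatorname{Spec}(A[y_1,\ldots,y_N])$, which is an affine variety. A closed subvariety of an affine variety is affine, so $X \setminus D$ is affine. Unwinding the reduction: for an affine open $U \subseteq Y$, the set $(f|_{X \setminus D})^{-1}(U)$ is the complement of $D|_{f^{-1}(U)}$ in $f^{-1}(U)$, and $f^{-1}(U) \to U$ is projective with $D|_{f^{-1}(U)}$ ample on each fiber, so by the case just treated this set is affine; hence $f|_{X \setminus D}$ is an affine morphism.

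I expect the passage from fiberwise ampleness to relative ampleness to be the only substantive step; once $\cO_X(D)$ is known to be $f$-ample, building the relative projective embedding, recognizing the complement of the relative hyperplane as relative affine space, and using that closed subvarieties of affine varieties are affine are all routine. (The lemma is to be read with $D$ an effective Cartier divisor, so that its canonical section is an honest global section of $\cO_X(D)$ rather than a rational one; this holds in all the applications in \S\ref{section:agproofs}.)
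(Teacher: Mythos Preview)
Your argument is correct and is essentially the same as the paper's: reduce to $Y$ affine, use the fiberwise criterion to upgrade to relative ampleness (the paper cites \cite[Theorem 1.7.8]{LazersfeldI} rather than EGA~III), and then embed $X$ in $\bbP^N_Y$ so that $X\setminus D$ lands in $\bbA^N_Y$. Your write-up is in fact somewhat more careful about the details (locality on the target, arranging the hyperplane section, the effective Cartier hypothesis) than the paper's sketch.
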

\begin{proof}
It is enough to assume that $Y$ is affine and prove that $X \setminus D$ is affine.  
By \cite[Theorem 1.7.8]{LazersfeldI}, the divisor $D$ is ample relative to $f$.  This means
that for some $m \gg 0$, the divisor $m D$ induces an embedding $X \hookrightarrow \bbP^n_Y$ for some
$n$ such that $f$ factors as $X \hookrightarrow \bbP^n_Y \rightarrow Y$.  The hyperplane
section at infinity of $X \subset \bbP^n_Y$ is precisely $m D$.
We conclude that $X \setminus D$ is a closed subvariety of the
affine variety $\bbA^n_Y$, and hence is itself affine.
\end{proof}

\begin{proof}[Proof of Theorem~\ref{maintheorem:cohcd2}]
We first recall the statement.  Fix some $g \geq 2$ and $n \geq 1$.  We must
prove that $\cohcd(\Moduli_{g,n}) \leq \cohcd(\Moduli_g)+1$.  Set
\[\delta = \begin{cases}
1 & \text{if $n = 1$},\\
0 & \text{if $n \geq 2$}.
\end{cases}\]
We will prove that $\cohcd(\Moduli_{g,n}) \leq \cohcd(\Moduli_{g,n-1})+\delta$.

Fix some $\ell \geq 3$.  Just like in the previous proof\footnote{Unlike in the previous
proof, here we could have simply taken the moduli space of curves with a full level-$\ell$
structure; however, we have decided not to introduce yet another piece of notation.}, let 
$\Moduli_{g,n-1}[\ell]$ be the cover of $\Moduli_{g,n-1}$ corresponding to the
subgroup $\PMod_{g,n-1}[\ell]$ of its (orbifold) fundamental group $\PMod_{g,n-1}$.
Recall that $\Moduli_{g,n-1}$ is only a coarse moduli space rather than a fine
moduli space for the same reason it is not smooth: the existence of curves with
automorphisms.  Since $\ell \geq 3$, the group $\PMod_{g,n-1}[\ell]$ is torsion-free,
so $\Moduli_{g,n-1}[\ell]$ is not only smooth, but also a fine moduli space.
It thus has a universal curve 
$U_{g,n-1}[\ell] \rightarrow \Moduli_{g,n-1}[\ell]$.
Removing the $(n-1)$ distinguished points from each fiber of $U_{g,n-1}[\ell]$, we
get a space $\tModuli_{g,n}$ that fits into a commutative diagram
\[\begin{CD}
\tModuli_{g,n} @>>> \Moduli_{g,n} \\
@VV{f}V                @VVV \\
\Moduli_{g,n-1}[\ell] @>>> \Moduli_{g,n-1}.
\end{CD}\]
Both horizontal maps are finite surjective maps, so by 
Theorem \ref{theorem:finitecoh} we have
\[\cohcd(\tModuli_{g,n}) = \cohcd(\Moduli_{g,n}) \quad \text{and} \quad
\cohcd(\Moduli_{g,n-1}[\ell]) = \cohcd(\Moduli_{g,n-1}).\]
It is thus enough to prove that 
$\cohcd(\tModuli_{g,n}) \leq \cohcd(\Moduli_{g,n-1}[\ell])+\delta$.

We first deal with the case where $n=1$.  The morphism $\tModuli_{g,1} \rightarrow \Moduli_{g}[\ell]$
is a flat projective morphism whose fibers are smooth projective curves.  We can thus apply
Lemma \ref{lemma:flat} to deduce that
\[\cohcd(\tModuli_{g,1}) \leq \cohcd(\Moduli_{g}[\ell]) + 1,\]
as desired.

We now deal with the case where $n>1$.  By Lemma \ref{lemma:affinemorphism}, to prove that
$\cohcd(\tModuli_{g,n}) \leq \cohcd(\Moduli_{g,n-1}[\ell])$, it is enough to prove that
the morphism $\tModuli_{g,n} \rightarrow \Moduli_{g,n-1}[\ell]$ is affine.  Recalling
that $U_{g,n-1}[\ell]$ is the universal curve, the morphism $\pi\colon U_{g,n-1}[\ell] \rightarrow \Moduli_{g,n-1}[\ell]$
is projective, and $\tModuli_{g,n}$ is obtained from $U_{g,n-1}[\ell]$ by removing a divisor whose
intersection with each fiber $\pi^{-1}(S)$ is a sum of $(n-1)$ points.  Since all positive divisors
on an algebraic curve are ample, the result now follows from Lemma \ref{lemma:removedivisor}.
\end{proof}

\begin{footnotesize}
\noindent
\begin{tabular*}{\linewidth}[t]{@{}p{\widthof{School of Mathematics \& Statistics}+0.25in}@{}p{\widthof{Department of Mathematics}+0.25in}@{}p{\linewidth - \widthof{Department of Mathematics} - \widthof{School of Mathematics \& Statistics}- 0.5in}@{}}
{\raggedright
Tara Brendle\par
School of Mathematics \& Statistics\par
University of Glasgow\par
University Place\par
Glasgow G12 8QQ\par
UK\par
{\tt tara.brendle@glasgow.ac.uk}}
&
{\raggedright
Nathan Broaddus\par
Department of Mathematics\par
Ohio State University\par
231 W. 18th Ave.\par
Columbus, OH 43210\par
USA\par
{\tt broaddus.9@osu.edu}}
&
{\raggedright
Andrew Putman\par
Department of Mathematics\par
University of Notre Dame \par
255 Hurley Hall\par
Notre Dame, IN 46556\par
USA\par
{\tt andyp@nd.edu}}\hfill
\end{tabular*}\hfill
\end{footnotesize}

\end{document}